\newtheorem{theorem}{Theorem}[section]
\newtheorem{conjecture}{Conjecture}
\newtheorem{corollary}[theorem]{Corollary}
\newtheorem{question}[theorem]{Question}
\newtheorem{definition}[theorem]{Definition}
\newtheorem{remark}[theorem]{Remark}
\newtheorem{lemma}[theorem]{Lemma}
\newtheorem{proposition}[theorem]{Proposition}
\newcommand{\codim}{\mathrm{codim}}
\newcommand{\B}{\mathcal{B}}
\newcommand{\Fqt}{\mathbb{F}_{q^2}}
\newcommand{\Fq}{\mathbb{F}_q}
\newcommand{\C}{\mathcal{C}}
\newcommand{\Q}{\mathcal{Q}}
\newcommand{\I}{\mathcal{I}}
\newcommand{\U}{\mathcal{U}}
\def\Fq{{\mathbb F}_q}
\def\AA{{\mathbb A}}
\def\FF{{\mathbb F}}
\def\PP{{\mathbb P}}
\newcommand{\X}{\mathcal{X}}
\begin{document}

\title[Intersection of non-degenerate Hermitian varieties and cubic hypersurfaces]{Maximum number of points of intersection of a non-degenerate Hermitian variety and a cubic hypersurface}

\author{Subrata Manna}
\address{Department of Mathematics, \newline \indent
Indian Institute of Technology Hyderabad, Kandi, Sangareddy, Telangana, India}
\email{subrata147314@gmail.com}
\thanks{The author is partially supported by a doctoral fellowship from the Council of Scientific and Industrial Research, Govt. of India}

\keywords{Hermitian variety, cubic hypersurface, rational points, Edoukou-Ling-Xing Conjecture}
\subjclass[2010]{Primary 14G05, 14G15, 05B25}

\begin{abstract}   
 Edoukou, Ling and Xing in 2010, conjectured that in $\PP^n(\Fqt)$, $n\geq 3$, the maximum number of common points of a non-degenerate Hermitian variety $\U_n$ and a hypersurface of degree $d$ is achieved only when the hypersurface is union of $d$ distinct hyperplanes meeting in a common linear space $\Pi_{n-2}$ of codimension $2$ such that $\Pi_{n-2}\cap \U_n$ is a non-degenerate Hermitian variety. Furthermore, these $d$ hyperplanes are tangent to $\U_n$ if $n$ is odd and non-tangent if $n$ is even. In this paper, we show that the conjecture is true for $d=3$ and $q\geq 7$.

\end{abstract}

\date{}
\maketitle

\section{Introduction}
Hermitian varieties over finite fields are among the most widely studied varieties in algebraic geometry due to their alluring geometric structure, high number of rational points, and applications to coding theory. These varieties became more prominent among coding theorists and combinatorists after G. Lachaud introduced functional codes in \cite{L}. Although Hermitian varieties were studied by Dickson from a group-theoretic point of view in \cite{Di}, Bose and Chakravarti were the first to study these varieties with a strong emphasis on geometry in \cite{BC}. In order to determine the minimum distance and the minimum weight codewords of the functional code defined over a non-degenerate Hermitian variety as well as from a combinatorial perspective, the following question has been studied in \cite{BBFS, BD, BDH, DM, ELX, HS} among other articles:

\begin{question}\label{question}
    Let $\U_n$ be a non-degenerate Hermitian variety in $\PP^n(\Fqt)$. Determine the maximum number of $\Fqt$-rational points common to a hypersurface $V(F)$ of degree $d$ in $\PP^n$ defined over $\Fqt$ and $\U_n$. Furthermore, determine the structure of the hypersurface $V(F)$ attaining the maximum number of points.
\end{question}

Bose and Chakravarti answered the above question for $d=1$ in \cite{BC, C}. When $n=2$, it follows from B\'{e}zout's theorem that a curve of degree $d\leq q$ and a non-degenerate Hermitian curve (which is essentially an irreducible curve) can intersect at a maximum of $d(q+1)$ many $\Fqt$-rational points. Although it is easy to set an example of a curve of degree $d$ defined over $\Fqt$ which intersects the non-degenerate Hermitian curve at exactly $d(q+1)$ many $\Fqt$-rational points, a complete characterization of curves defined over $\Fqt$ achieving $d(q+1)$ many $\Fqt$-rational points common to a non-degenerate Hermitian curve is still unresolved. To this end, a few examples of such curves can be found in \cite{BDMN}.

For the case when $n=3$, S{\o}rensen, in his Ph.D. thesis \cite{SoT} conjectured that a non-degenerate Hermitian surface $\U_3$ in $\PP^3(\Fqt)$ has at most $d(q^3+q^2-q)+q+1$ many $\Fqt$-rational points in common with a surface of degree $d\leq q$ defined over $\Fqt$ and the maximum is attained by a surface if and only if the surface is the union of $d$ planes in $\PP^3(\Fqt)$ that are tangent to $\U_3$, each containing a common line $\ell$ intersecting $\U_3$ at $q+1$ points. S{\o}rensen's conjecture was proved for $d=2$ in \cite{E}, for $d=3$ in \cite{BD}, and finally, the conjecture was settled for any $d\leq q$ in \cite{BDH}.

Now, it is natural to look at the Question \ref{question} when $n\geq 4$. To this end, Edoukou, Ling, and Xing in \cite{ELX} posed a conjecture which can be rephrased as follows:

\begin{conjecture}\cite[Conjecture 2 (ii)]{ELX}\label{ELXC}
   Let $n\geq 3$ and $\U_n$ be a non-degenerate Hermitian variety in $\PP^n(\Fqt)$. If $V(F)$ is a hypersurface of degree $d\leq q$, then $\max_{V(F)} |V(F)(\Fqt)\cap \U_n|$ is attained if and only if the hypersurface $V(F)$ is union of $d$ distinct hyperplanes $\Sigma_1, \Sigma_2,\dots, \Sigma_d$ such that each of the hyperplanes contains a common $(n-2)$-dimensional linear space $\Pi$  and $\Pi\cap \U_n$ is a non-degenerate Hermitian variety and: 

     --- If $n$ is even, the $d$ hyperplanes $\Sigma_1,\dots, \Sigma_d$ are non-tangent to $\U_n$.

     --- If $n$ is odd, the $d$ hyperplanes $\Sigma_1,\dots, \Sigma_d$ are tangent to $\U_n$. 
\end{conjecture}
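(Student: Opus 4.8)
The plan is to prove the conjecture by induction on the dimension $n$, with the two already-settled cases as base: for $n=2$ the bound $d(q+1)$ follows from B\'ezout, and for $n=3$ the full statement for every $d\leq q$ is the proven S{\o}rensen conjecture of \cite{BDH}. Throughout, write $h_k := (q^{k+1}-(-1)^{k+1})(q^k-(-1)^k)/(q^2-1)$ for the number of $\Fqt$-points of a non-degenerate Hermitian variety of projective dimension $k$. The first step is to pin down the extremal value. For the conjectured configuration---$d$ distinct hyperplanes $\Sigma_1,\dots,\Sigma_d$ through a common $\Pi=\Pi_{n-2}$ with $C:=\Pi\cap\U_n$ non-degenerate---the sets $\Sigma_i\cap\U_n$ overlap pairwise exactly in $C$ (since $\Sigma_i\cap\Sigma_j=\Pi$), so inclusion--exclusion collapses to
\[
|V(F)\cap\U_n| = \sum_{i=1}^d |\Sigma_i\cap\U_n| - (d-1)\,|C|,
\]
regardless of tangency. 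A non-tangent hyperplane meets $\U_n$ in $h_{n-1}$ points and a tangent one in $1+q^2h_{n-2}$ points, and the elementary identity $h_{n-1}-(1+q^2h_{n-2}) = (-1)^n q^{n-1}$ shows that a non-tangent section is the larger precisely when $n$ is even and a tangent section the larger precisely when $n$ is odd. This one computation both produces the target maximum $M(n,d)$ and explains the parity dichotomy in the statement; it remains to prove $|V(F)\cap\U_n|\leq M(n,d)$ in general, with equality only for the described configuration.

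For the upper bound I would run the dimension induction through hyperplane sections. For a hyperplane $H\cong\PP^{n-1}$ not contained in $V(F)$, the section $H\cap\U_n$ is either a non-degenerate $\U_{n-1}$ or a Hermitian cone, and $H\cap V(F)$ is a hypersurface of degree $\leq d$ in $H$; the inductive bound controls $|V(F)\cap\U_n\cap H|$ (for a cone, by peeling off the vertex and inducting on its non-degenerate base). Summing over all hyperplanes and using the incidence identity $\sum_H |V(F)\cap\U_n\cap H| = |V(F)\cap\U_n|\cdot (q^{2n}-1)/(q^2-1)$ converts the sectionwise bounds into a bound on $|V(F)\cap\U_n|$ itself. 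The hyperplanes that are \emph{components} of $V(F)$ must be handled separately: writing $F=L_1\cdots L_s\,G$ with the $L_i$ the distinct linear factors and $\deg G=d-s$, I would bound the contribution of the $s$ planes exactly and that of $V(G)$ by the induction hypothesis in degree $d-s$, then recombine.

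Equality forces every section entering the averaging to be simultaneously extremal, so by induction each such $H\cap V(F)$ is a union of $d$ hyperplanes of $H$ through a common codimension-$2$ subspace of $H$ meeting $H\cap\U_n$ in a non-degenerate Hermitian variety. I would then glue these local descriptions: letting $H$ vary over the family and intersecting the resulting collections of $d$ hyperplanes shows the $d$ components are cut out globally, i.e. $V(F)=\Sigma_1\cup\cdots\cup\Sigma_d$ with the $\Sigma_i$ hyperplanes of $\PP^n$ through a single $\Pi_{n-2}$. Feeding this back into the inclusion--exclusion count and invoking $h_{n-1}-(1+q^2h_{n-2})=(-1)^n q^{n-1}$ forces, for the total to reach $M(n,d)$, non-tangency when $n$ is even and tangency when $n$ is odd, together with non-degeneracy of $\Pi\cap\U_n$ (a degenerate $C$ strictly lowers $|C|$ and hence the total).

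The genuine difficulty---and the reason the conjecture is open beyond small $d$---is the classification of extremal hypersurfaces that are \emph{not} unions of $d$ hyperplanes. The inductive averaging is not automatically sharp: a hypersurface with an irreducible component of degree $\geq 2$, or a reducible one whose pieces are not all hyperplanes, can have individual sections far from extremal that might nonetheless conspire to a large global count. Ruling this out requires sharp, configuration-independent bounds on how many $\Fqt$-points of $\U_n$ a non-linear component can carry, which do not follow from induction alone; these must come from intersection-theoretic input---B\'ezout along the lines of $\U_n$, which meet it in $1$, $q+1$, or $q^2+1$ points---and crucially from the hypothesis $d\leq q$, ensuring that a section cannot split into more than $d$ hyperplanes and that the line bounds stay strictly below the hyperplane contributions. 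Making these estimates uniformly strict in $n$ and $d$ is exactly the step that the $d=3$, $q\geq 7$ case circumvents by enumerating the few cubic types (three hyperplanes, a hyperplane plus an irreducible quadric, and an irreducible cubic), and it is where substantially new ideas are needed for the general conjecture.
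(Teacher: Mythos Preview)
The statement you are attempting is Conjecture~\ref{ELXC}, which the paper does \emph{not} prove in general: the paper establishes only the case $d=3$, $q\ge 7$ (Theorems~\ref{evendim} and~\ref{odddim}). Your proposal is explicitly an outline that concedes the general case is open, so there is no complete proof here to compare; what remains is to compare your sketch against the paper's actual argument for $d=3$.

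Your inductive-averaging idea is close in spirit to the paper's Proposition~\ref{nontangent}, but the paper uses the double count very differently and more carefully. It does not attempt to bound $|V(F)\cap\U_n|$ directly by averaging $M(n-1,d)$ over all hyperplanes: that step in your outline is not sharp, because tangent hyperplanes give cone sections rather than non-degenerate $\U_{n-1}$'s, and the resulting ratio $(q^{2n+2}-1)/(q^{2n}-1)$ overshoots $M(n,d)/M(n-1,d)$. Instead the paper introduces an auxiliary threshold $B_n$ (Definition~\ref{variables}), restricts the double count to \emph{non-tangent} hyperplanes only (Lemma~\ref{numberoftangent} computes their number through a point), and concludes merely that some single non-tangent section has $>B_{n-1}$ points. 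Induction then produces a hyperplane \emph{component} of $V(F)$ (Proposition~\ref{hyperplane}), and the remaining quadric factor is handled by the classification in \cite{BBFS} (Theorem~\ref{threehyp}). Your ``gluing'' step, by contrast, is not a proof: knowing that generic sections are unions of $d$ hyperplanes does not force $V(F)$ itself to be such a union, and the paper avoids this entirely by working with components rather than sections in the equality analysis.

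There is also a genuine error in your final paragraph on the equality case. In the identity $|V(F)\cap\U_n|=\sum_i|\Sigma_i\cap\U_n|-(d-1)|C|$, \emph{decreasing} $|C|$ \emph{increases} the total, so a smaller $|C|$ cannot by itself force non-degeneracy of $\Pi\cap\U_n$. The actual mechanism, worked out in Lemmas~\ref{onenontangent}--\ref{bothtangent} and Remark~\ref{cardinality of codim two}, is a constraint: once at least one $\Sigma_i$ is non-tangent, $\Pi\cap\U_n$ can only be $\U_{n-2}$ or $\Pi_0\U_{n-3}$, and for even $n$ the non-degenerate option is the \emph{smaller} of the two (hence the one maximizing the total); the dual statement holds for odd $n$ with two tangent hyperplanes. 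The parity dichotomy therefore comes from this coupling between tangency type and the admissible intersections $\Pi\cap\U_n$, not from ``degenerate $C$ lowers the total''.
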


As already mentioned above, the Conjecture \ref{ELXC} is true for $n=3$. For $n\geq 4$, Hallez and Storme proved this conjecture in \cite{HS}, for $d=2$ under the constraint that $n<O(q^2)$. Bartoli, Boeck, Fanali, and Storme later proved the conjecture for any integer $n\geq 4$ and $d=2$ in \cite[Theorem 3.3]{BBFS}. Notably, the proof of Hallez and Storme, as well as that of Bartoli et al., relies on the classification of quadrics in projective space over a finite field. However, we will be more algebro-geometric towards our approach, though the key idea remains aligned with that of  \cite{HS, BBFS}. 

In this paper, we investigate the Conjecture~\ref{ELXC} for cubic hypersurfaces, i.e., the case $d=3$ with $n \geq 4$, and establish an affirmative answer when $q \geq 7$. More precisely, we prove the following.

\begin{theorem}[Theorem~\ref{evendim} and Theorem  \ref{odddim}]\label{Final}

   Let $n\geq 4$ and $\U_n$ be a non-degenerate Hermitian variety in $\PP^n(\Fqt)$. If $V(F)$ is a cubic hypersurface in $\PP^n$ defined over $\Fqt$, then for $q\geq 7$,
   $$\max_{V(F)} |V(F)(\Fqt)\cap \U_n|=\begin{cases}
          3|\U_{n-1}(\Fqt)|-2|\U_{n-2}(\Fqt)|\ \ \text{if $n$ is even} \\
          (3q^2-2)|\U_{n-2}(\Fqt)|+3\ \ \text{if $n$ is odd}.
       \end{cases}$$
Moreover, this maximum is attained if and only if the cubic hypersurface \( V(F) \) satisfies the conditions stated in the Conjecture \ref{ELXC}.       
\end{theorem}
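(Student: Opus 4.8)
\section{Strategy of the proof}

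The plan is to prove both the formula and the characterization by induction on $n$, with base case $n=3$ supplied by the theorem of Bartoli and Datta \cite{BD}. The inductive inputs are: the case $d=1$ of Conjecture~\ref{ELXC} (Bose--Chakravarti \cite{BC, C}); its case $d=2$, established by Bartoli, Boeck, Fanali and Storme \cite[Theorem~3.3]{BBFS}; and B\'{e}zout's theorem for plane sections, since a non-degenerate Hermitian curve has degree $q+1>3$ and hence shares no component with a cubic, so the two meet in at most $3(q+1)$ points. Since a hyperplane section of a non-degenerate $\U_n$ is either a non-degenerate $\U_{n-1}$ (non-tangent hyperplane) or a cone with a point-vertex over a non-degenerate $\U_{n-2}$ (tangent hyperplane), I expect to carry the induction also for \emph{degenerate} Hermitian varieties, deducing the cone bounds from the non-degenerate ones through projection from the vertex. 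Throughout, the cubic form $F$ over $\Fqt$ is treated according to its factorization type: \textup{(i)} $F$ splits into three (not necessarily distinct) linear factors; \textup{(ii)} $F$ is a linear form times an absolutely irreducible quadratic form; \textup{(iii)} $F$ is absolutely irreducible.

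In case \textup{(i)}, $V(F)=\Sigma_1\cup\Sigma_2\cup\Sigma_3$ is a union of at most three hyperplanes, and inclusion--exclusion expresses $|V(F)\cap\U_n|$ through the sections of $\U_n$ by the $\Sigma_i$ and their pairwise and triple intersections, all cut by linear spaces of codimension at most $3$ and hence controlled by \cite{BC}. A comparison of the resulting Hermitian point-counts — combined with the restriction on which triples of hyperplanes of a prescribed tangency type can pass through a common $\Pi_{n-2}$, which in particular rules out certain degenerate sections $\Pi_{n-2}\cap\U_n$ outright — should show that $|V(F)\cap\U_n|$ is maximized exactly when the $\Sigma_i$ are distinct, pass through a common $\Pi_{n-2}$ (so $\Sigma_i\cap\Sigma_j=\Pi_{n-2}$ for $i\neq j$), have $\Pi_{n-2}\cap\U_n$ non-degenerate, and are non-tangent for $n$ even and tangent for $n$ odd: the tangency type makes each $|\Sigma_i\cap\U_n|$ as large as possible (Bose--Chakravarti), while the non-degeneracy of $\Pi_{n-2}\cap\U_n$ minimizes the overcounted term. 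The count for that configuration is $|V(F)\cap\U_n|=3\,|\Sigma_1\cap\U_n|-2\,|\U_{n-2}(\Fqt)|$, equal to $3\,|\U_{n-1}(\Fqt)|-2\,|\U_{n-2}(\Fqt)|$ for $n$ even and to $(3q^2-2)\,|\U_{n-2}(\Fqt)|+3$ for $n$ odd; this yields the lower bound and the characterization inside case \textup{(i)}.

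In case \textup{(ii)}, write $V(F)=\Sigma\cup\Q$ with $\Sigma$ a hyperplane and $\Q$ an irreducible quadric; then $|V(F)\cap\U_n|\le|\Sigma\cap\U_n|+|\Q\cap\U_n|$, the first term bounded by \cite{BC}. Because the extremal quadric section in \cite[Theorem~3.3]{BBFS} is attained only by a \emph{reducible} quadric, an irreducible $\Q$ meets $\U_n$ in strictly fewer points, by a margin one extracts from the quadric classification underlying \cite{BBFS}; this forces the sum below the cubic bound, so no maximum occurs here. Case \textup{(iii)} is the heart of the matter. First one disposes of cones: if $V(F)$ is a cone with vertex a linear space $\Lambda$, a reduction through the subspaces spanned by $\Lambda$ relates $|V(F)\cap\U_n|$ to cubic--Hermitian intersections in smaller dimension, where the induction applies. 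When $V(F)$ is not a cone, for $n\ge4$ its generic hyperplane section is again an absolutely irreducible cubic of one lower dimension, and I would combine this with the inductive \emph{stability} statement — a cubic hypersurface that is not a union of hyperplanes meets the relevant Hermitian variety in strictly fewer than the maximal number of points — applied over a suitable family of hyperplane sections of $V(F)$: were $|V(F)\cap\U_n|$ to attain the stated bound, too many of those sections would have to be maximal, which by the inductive characterization would force $V(F)$ to contain a hyperplane, contradicting absolute irreducibility. Pushing the numerics of this step through is exactly where the hypothesis $q\ge7$ is required, to keep the accumulated lower-order errors below the gap.

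The characterization then follows by tracing equality backwards: in case \textup{(i)} it forces the configuration above, while in cases \textup{(ii)} and \textup{(iii)} equality cannot occur, so the maximum is attained only for the configuration of Conjecture~\ref{ELXC}. I expect the principal obstacle to be case \textup{(iii)}: with no classification of cubic hypersurfaces to invoke, the induction must be run honestly, which means \textup{(a)} establishing the cone-Hermitian analogues of every bound in every intermediate dimension; \textup{(b)} using the fine geometry of cubics — their cone structure, the irreducibility of their generic hyperplane sections, and the linear subspaces they do or do not contain — rather than any black-box intersection estimate, which alone is too weak by a lower-order term; and \textup{(c)} keeping the Hermitian point-count identities tight enough that every inequality closes already at $q=7$ rather than only asymptotically.
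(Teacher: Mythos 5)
The core gap is your case \textup{(iii)} (together with the cubics your trichotomy omits: forms irreducible over $\Fqt$ but not absolutely irreducible, e.g.\ a product of three conjugate linear forms). Your plan --- ``were the bound attained, too many hyperplane sections would have to be maximal, which by the inductive characterization forces a hyperplane inside $V(F)$'' --- cannot be run on a qualitative stability statement (``strictly fewer than the maximum''). What the argument actually needs is an explicit threshold sequence $B_n$, strictly below the conjectured maximum but above anything a hyperplane-free cubic can achieve, obeying a recursion of the shape $B_n\approx q^2B_{n-1}-q^{n-2}$ (even $n$), together with a double count of incidences between points of $\C_n(\Fqt)\cap\U_n$ and the \emph{non-tangent} hyperplanes through them (using that the number of tangent hyperplanes through a point of $\U_n$ is independent of the point). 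This produces one non-tangent $\Sigma_0$ with $|\C_n\cap\Sigma_0\cap\U_n|>B_{n-1}$; induction then gives an $(n-2)$-space $\Pi\subset\C_n\cap\Sigma_0$, and a Lachaud--Rolland estimate on the affine pieces $\Sigma\setminus\Pi$ over the pencil of hyperplanes through $\Pi$ yields a contradiction unless $\C_n$ contains a hyperplane. Note also that restricting the averaging to non-tangent hyperplanes keeps the entire induction inside non-degenerate Hermitian varieties, so the cone-valued version of the induction you anticipate never has to be set up --- which is fortunate, because projection from the vertex does not cleanly reduce $|\C_n\cap\Pi_0\U_{n-2}|$ to a cubic--Hermitian intersection in lower dimension when $\C_n$ is not itself a cone with that vertex.

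The second genuine problem is your base case. Starting at $n=3$ with \cite{BD}, the step $3\to 4$ does not close by any averaging of this kind: the maximum in $\PP^3$ is $3q^3+3q^2-2q+1$, whereas the threshold needed in $\PP^4$ is $3(q^5+1)$, i.e.\ about $q^2\cdot 3q^3$; so you would need to know that \emph{plane-free} cubic surfaces meet $\U_3$ in at most roughly $3q^3+O(q)$ points, and more generally you need fine control of which cubic threefolds in $\PP^4$ approach $3(q^5+1)$ (cubics containing no generator of $\U_4$, containing a tangent line, containing three generators in a plane, at most two generators per plane, \dots). That is precisely the content of \cite{DM}, which the paper imports wholesale as its base case $n=4$ (and where the hypothesis $q\ge 7$ is consumed); it is not recoverable from the $n=3$ theorem by the inductive mechanism you describe. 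Your case \textup{(i)} (three hyperplanes, inclusion--exclusion, tangency and codimension-of-intersection analysis) and case \textup{(ii)} (hyperplane plus irreducible quadric, bounded via \cite{BBFS}) do match the paper's Section 4 and its Theorem on decomposition into three hyperplanes, respectively; but as written, the hyperplane-free case is a plan rather than a proof, and at $n=4$ it is a plan that would fail without the external input.
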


The rest of this paper is organized as follows. In Section \ref{sec:prel}, we revisit various well-known properties of Hermitian varieties with a particular focus on non-degenerate  Hermitian varieties, review some results related to the quadric sections of a non-degenerate Hermitian variety, and some preliminary notions from algebraic geometry. In Section \ref{sec: cubic section} we describe the cubic sections of a non-degenerate Hermitian variety. Finally, in Section \ref{sec: structure}, we explore the structure of a cubic hypersurface sharing the maximum number of points with a non-degenerate Hermitian variety in both even and odd dimensions.

\section{Preliminaries}\label{sec:prel}
We fix an integer $q$, a prime power. As customary, $\Fq$ and $\Fqt$ denote the finite fields with $q$ and $q^2$ elements, respectively. For $n \ge 0$, we denote by $\PP^n$, the projective space of dimension $n$ over the algebraic closure $\overline{\FF}_q$, while $\PP^n (\Fqt)$ will denote the set of all $\Fqt$-rational points on $\PP^n$. 
Likewise, $\AA^n$ and $\AA^n (\Fqt)$ will denote the affine space of dimension $n$ over $\overline{\FF}_q$ and the set of all $\Fqt$-rational points of $\AA^n$ respectively.  
Furthermore, for a homogeneous polynomial $F \in \Fqt[x_0, \dots , x_n]$, we denote by $V(F)$, the set of zeroes of $F$ in $\PP^n$ and by $V(F)(\Fqt)$ the set of all $\Fqt$-rational points of $V(F)$. By an algebraic variety, we mean a set of zeroes of a certain set of polynomials in the affine space or projective space, depending on the context. In particular, an algebraic variety need not be irreducible.  However, when we say that a hypersurface $V(F)$ is irreducible, we will mean that the polynomial $F$ is irreducible in the field of its definition.

This section is organized into three subsections. The first subsection reviews several well-known facts about Hermitian varieties over finite fields. The second subsection focuses on the intersection of a non-degenerate Hermitian variety with a quadric. The third subsection revisits some fundamental concepts from basic algebraic geometry. All the results presented in this section are already established, and their proofs can be found in the cited references.

\subsection{Geometry of Hermitian varieties over a finite field}\label{Geometry of Hermitian}
First, we recall the definition and key properties of Hermitian varieties (cf. \cite{BC, C}), which will be essential in the later sections of this paper.

\begin{definition}[\cite{BC}] \normalfont
Let $H = (h_{ij})$ ($0 \le i, j \le n$) be an $(n+1) \times (n+1)$ matrix with entries in $\Fqt$. We denote by $H^{(q)}$ the matrix whose $(i, j)$-th entry is given by $h_{ij}^q$. The matrix $H$ is said to be a \textit{Hermitian matrix} if $H \neq 0$ and $H^T = H^{(q)}$. A \textit{Hermitian variety} of dimension $n-1$, denoted by $\U_n$, is the zero set of the polynomial $x^T H x^{(q)}$ inside $\PP^n$, where $H$ is an $(n+1) \times (n+1)$ Hermitian matrix and $x = (x_0, \dots, x_n)^T$. The Hermitian variety is classified as \textit{non-degenerate} if $\mathrm{rank} \ H = n+1$ and \textit{degenerate} otherwise.
\end{definition}

Bose and Chakravarti in \cite[Corollary of Theorem 4.1]{BC} proved that if the rank of a Hermitian matrix is $r$, then after a suitable projective change of coordinates over $\Fqt$, we can recognize the corresponding Hermitian variety by the zero set of the polynomial
\begin{equation}\label{reduceherm}
x_0^{q+1} + x_1^{q+1} + \dots + x_{r-1}^{q+1} = 0.
\end{equation}
Here we note that the polynomial $x_0^{q+1} + x_1^{q+1} + \dots + x_{r-1}^{q+1}$ is irreducible over $\overline{\FF}_q$ only when $r\geq 3$. Thus, it follows that a Hermitian variety of rank at least three is always absolutely irreducible. Moreover, the number of $\Fqt$-rational points of a non-degenerate Hermitian variety $\U_n$ is also well known and well established in the literature.
\begin{theorem}\cite[Theorem 8.1]{BC}\label{nondeg point}\ Let $\U_n$ be a non-degenerate Hermitian variety in $\PP^n$. Then $$|\U_n(\Fqt)|=\frac{\big(q^n-(-1)^n\big)\left(q^{n+1}-(-1)^{n+1}\right)}{q^2-1}.$$
\end{theorem}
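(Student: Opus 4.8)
The plan is to reduce to the canonical form and then carry out an elementary point count via the norm map, using that $|\U_n(\Fqt)|$ is invariant under the projective change of coordinates over $\Fqt$ supplied by Bose and Chakravarti. By \eqref{reduceherm}, a non-degenerate $\U_n$ (rank $n+1$) may be taken to be the zero locus of $x_0^{q+1} + \cdots + x_n^{q+1}$. The central observation is that the map $x \mapsto x^{q+1}$ is precisely the norm $\mathrm{N}\colon \Fqt \to \Fq$, which is surjective, has $0$ as the only preimage of $0$, and is $(q+1)$-to-$1$ onto $\Fq^{*}$ (since $|\Fqt^{*}|/|\Fq^{*}| = q+1$); thus each monomial $x_i^{q+1}$ takes the value $0$ exactly once and every value in $\Fq^{*}$ exactly $q+1$ times.

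First I would count the affine solutions $a(m) := \#\{(x_0,\dots,x_{m-1}) \in \Fqt^{m} : \sum_i x_i^{q+1} = 0\}$. Fixing a nontrivial additive character $\psi$ of $\Fq$ and using orthogonality, $a(m) = \tfrac{1}{q}\sum_{t \in \Fq}\bigl(\sum_{x \in \Fqt}\psi(t\,\mathrm{N}(x))\bigr)^{m}$. The inner exponential sum equals $q^2$ when $t = 0$, and for $t \neq 0$ equals $1 + (q+1)\sum_{s \in \Fq^{*}}\psi(ts) = 1 - (q+1) = -q$, using the fiber structure of $\mathrm{N}$ together with $\sum_{s \in \Fq^{*}}\psi(ts) = -1$. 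Hence $a(m) = \tfrac{1}{q}\bigl[q^{2m} + (q-1)(-q)^{m}\bigr] = q^{2m-1} + (q-1)(-1)^{m}q^{m-1}$. Equivalently, one may avoid characters and set up the coupled two-term recurrence relating $a(m)$ and the count $b(m)$ of solutions of $\sum_i x_i^{q+1} = c$ for a fixed $c \in \Fq^{*}$ (this count being independent of the choice of $c$ by rescaling $x_i \mapsto \lambda x_i$ and the surjectivity of $\mathrm{N}$), whose transition matrix has eigenvalues $q^{2}$ and $-q$, yielding the same closed form.

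Next I would pass from the affine cone to projective points. Since $x_0^{q+1} + \cdots + x_n^{q+1}$ is homogeneous, its solution set in $\Fqt^{n+1}$ is a cone; deleting the origin and quotienting by the scalar action of $\Fqt^{*}$ gives $|\U_n(\Fqt)| = \dfrac{a(n+1) - 1}{q^{2}-1}$. Substituting $a(n+1) = q^{2n+1} + (q-1)(-1)^{n+1}q^{n}$, I would verify by expanding the target product that $a(n+1) - 1 = \bigl(q^{n} - (-1)^{n}\bigr)\bigl(q^{n+1} - (-1)^{n+1}\bigr)$, so that dividing by $q^2 - 1$ produces exactly the claimed formula.

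The computation is classical and presents no serious obstacle; the only points requiring care are the evaluation $\sum_{x \in \Fqt}\psi(t\,\mathrm{N}(x)) = -q$ for $t \neq 0$ (equivalently, justifying that $b(m)$ is independent of the nonzero target value), and the affine-to-projective bookkeeping, where one must remember to remove the origin before dividing by $q^2 - 1$. A purely geometric alternative proceeds by induction on $n$: intersecting $\U_n$ with the hyperplane $x_n = 0$ recovers the lower-dimensional non-degenerate variety $\U_{n-1}$, while the complementary chart $x_n = 1$ contributes $b(n)$ points, giving $|\U_n(\Fqt)| = |\U_{n-1}(\Fqt)| + b(n)$; solving this recurrence from the base case $|\U_1(\Fqt)| = q+1$ reproduces the same closed form.
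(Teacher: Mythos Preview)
Your argument is correct. Note, however, that the paper does not supply its own proof of this statement: it is quoted verbatim from Bose--Chakravarti \cite[Theorem~8.1]{BC} as background, with the proof left to that reference. Your character-sum derivation via the norm $\mathrm{N}\colon \Fqt\to\Fq$ is a clean self-contained route; the algebraic verification that $a(n+1)-1=(q^n-(-1)^n)(q^{n+1}-(-1)^{n+1})$ checks out. The inductive alternative you sketch at the end---slicing $\U_n$ by the hyperplane $x_n=0$ and counting the affine complement---is closer in spirit to the original argument in \cite{BC}, which obtains the formula recursively rather than through exponential sums. Either approach suffices.
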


\noindent As mentioned in the introduction, the hyperplane sections of a non-degenerate Hermitian variety are well-studied, thanks to \cite{BC, C}.

\begin{theorem}\label{hyperplane section}
     Let $\U_n$ be a non-degenerate Hermitian variety in $\PP^n$ and $\Sigma$ be a hyperplane of $\PP^{n}$ defined over $\Fqt$. 
     \begin{enumerate}
     \item[(a)] \cite[Theorem 7.4]{BC} if $\Sigma$ is tangent to $\U_n$ at a point $P\in \U_n(\Fqt)$, then $\Sigma \cap \U_n$ is a degenerate Hermitian variety of rank $n-1$ contained in $\Sigma$. That is, 
     $\Sigma \cap \U_n$ is a cone over a non-degenerate Hermitian variety $\U_{n-2}$ contained in a hyperplane of $\Sigma$ with center at $P$. 
     \item[(b)] \cite[Theorem 3.1]{C} if $\Sigma$ is not a tangent to $\U_n$, then $\Sigma \cap \U_n$ is a non-degenerate Hermitian variety $\U_{n-1}$ in $\Sigma$.
     \end{enumerate}
     
 \end{theorem}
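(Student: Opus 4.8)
The plan is to deduce both statements from a single rank computation for the Hermitian form cut out on $\Sigma$, via the polarity attached to $\U_n$. Write $\U_n = V(F)$ with $F(x) = x^T H x^{(q)}$ and $H$ non-degenerate, and set $B(x,y) = x^T H y^{(q)}$ for the associated sesquilinear form, so that $P=[p] \in \U_n$ iff $B(p,p)=0$. First I would record the key characteristic-$p$ computation: since the derivative of each $x_j^q$ vanishes in characteristic $p$, one gets $\partial F/\partial x_k = (Hx^{(q)})_k$. Hence the tangent hyperplane to $\U_n$ at a smooth point $P=[p]$ is the polar hyperplane $T_P = \{x : (Hp^{(q)})^T x = 0\}$, and because $H$ is invertible the assignment $P \mapsto T_P$ is a bijection between points and hyperplanes of $\PP^n$. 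In particular every hyperplane $\Sigma$ has a unique \emph{pole} $P_\Sigma$, the point with $T_{P_\Sigma}=\Sigma$; since $\Sigma$ is defined over $\Fqt$ and $H$ has entries in $\Fqt$, so does $P_\Sigma$. By definition, ``$\Sigma$ tangent to $\U_n$ at $P$'' means $\Sigma = T_P$, i.e.\ $P_\Sigma = P \in \U_n$.

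Next I would restrict the form to $\Sigma$. Choosing a full-rank $(n+1)\times n$ matrix $M$ over $\Fqt$ with $\Sigma = \{[My] : y \in \PP^{n-1}\}$, the polynomial $F$ pulls back to $y^T H' y^{(q)}$ with $H' = M^T H M^{(q)}$, and the identities $H^T = H^{(q)}$ and $M^{(q^2)}=M$ give $(H')^T = (H')^{(q)}$. Thus $\Sigma \cap \U_n$ is again a Hermitian variety, now inside $\Sigma \cong \PP^{n-1}$, and its type is governed entirely by $\mathrm{rank}\,H'$. The crucial step is to identify $\mathrm{Rad}(H')$: unwinding $B(My, Mz)=0$ for all $y$ shows that $[Mz]$ lies in the radical exactly when $H(Mz)^{(q)}$ is proportional to the covector defining $\Sigma$, i.e.\ exactly when $T_{[Mz]} = \Sigma$. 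Since the pole is unique, this forces $[Mz] = P_\Sigma$, so the radical is nonzero precisely when $P_\Sigma \in \Sigma$, and in that case it is the one-dimensional subspace $\langle p_\Sigma\rangle$, giving corank exactly one.

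Finally I would assemble the dichotomy. Because $P_\Sigma \in \Sigma \iff B(p_\Sigma,p_\Sigma)=0 \iff P_\Sigma \in \U_n \iff \Sigma$ is tangent, the pole lies off $\Sigma$ exactly in the non-tangent case; then $\mathrm{Rad}(H')=0$, so $\mathrm{rank}\,H' = n$ and $\Sigma\cap\U_n$ is a non-degenerate Hermitian variety $\U_{n-1}$ in $\Sigma$, proving (b). In the tangent case the pole is the contact point $P$, the radical is exactly $\langle p\rangle$, so $\mathrm{rank}\,H' = n-1$ and $\Sigma\cap\U_n$ is degenerate of rank $n-1$; applying the Bose--Chakravarti normal form \eqref{reduceherm} to $H'$ then exhibits $\Sigma\cap\U_n$ as a cone with vertex at the radical point $P$ over a non-degenerate $\U_{n-2}$ lying in a complementary hyperplane of $\Sigma$, proving (a). I expect the only real care to be needed in the radical computation of the middle paragraph---specifically in checking that the corank drops by exactly one (and not more) in the tangent case, which is where non-degeneracy of the ambient $H$ is used---together with the clean characteristic-$p$ gradient identity, which is precisely what makes the tangent hyperplane coincide with the polar hyperplane.
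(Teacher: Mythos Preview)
Your argument is correct and self-contained. Note, however, that the paper does not actually give its own proof of this theorem: it is stated in the preliminaries with citations to \cite[Theorem 7.4]{BC} and \cite[Theorem 3.1]{C}, and the surrounding text explicitly says that all results in that subsection are already established and proved in the cited references. So there is no in-paper proof to compare against.

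That said, your approach via the polarity $P \mapsto T_P$ and the rank of the restricted Hermitian matrix $H' = M^T H M^{(q)}$ is exactly the classical one, and it is consistent with the viewpoint the paper adopts in Remark~\ref{polar}, where the pole of a non-tangent hyperplane is identified as $(H^{-1}a)^{(q)}$ and shown to be an external point. Your unified treatment---computing the radical of $H'$ and observing that it is nonzero (of dimension one) precisely when the pole $P_\Sigma$ lies on $\Sigma$, which happens iff $P_\Sigma \in \U_n$, which happens iff $\Sigma$ is tangent---is clean and handles both (a) and (b) at once. The one place to be slightly more explicit is the step ``$H(Mz)^{(q)}$ proportional to the covector defining $\Sigma$'': you should also note that $H(Mz)^{(q)}=0$ forces $z=0$ (by invertibility of $H$ and injectivity of $M$), so the radical really is at most one-dimensional. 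With that remark made, the corank-one conclusion in the tangent case is fully justified, and the appeal to the normal form \eqref{reduceherm} then gives the cone description with vertex $P$.
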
 

\begin{remark}\label{polar}\normalfont
   Chakravarti proved \cite[Theorem 3.1]{C} for a polar hyperplane at an external point of $\PP^n(\Fqt)$ with respect to $\U_n=V\left(x^THx^{(q)}\right)$, where a point $C\in\PP^n(\Fqt)$ with row vector $c^T=(c_0,c_1,\dots,c_n)$ is called an external point with respect to $\U_n$ if $c^THc^{(q)}\neq 0$ and the polar hyperplane $\Sigma$ of $\U_n$ at $C$ is the set of $\Fqt$-rational solutions of $x^THc^{(q)}=0$. If $\Sigma_0:=V\left(\sum_{i=0}^na_ix_i\right)$ is a hyperplane in $\PP^n$ defined over $\Fqt$, that is not a tangent hyperplane to $\U_n$, it follows that $P:=\left(H^{-1}(a_0,a_1,\dots,a_n)^T\right)^{(q)}$ is an external point in $\PP^n(\Fqt)$ with respect to $\U_n$ and $\Sigma_0$ is the polar hyperplane at $P$, otherwise the tangent hyperplane to $\U_n$ at $P$ is $\Sigma_0$.
\end{remark}

\begin{lemma}\label{Hermitian}
   Let $\Sigma$ be a hyperplane in $\PP^n$ defined over $\Fqt$ and $\U_n$ be a non-degenerate Hermitian variety. Then $$|\Sigma(\Fqt)\cap \U_n|\leq\begin{cases}
       |\U_{n-1}(\Fqt)|\ \ \text{if $n$ is even}\\
       q^2|\U_{n-2}(\Fqt)|+1\ \ \text{if $n$ is odd.}
   \end{cases}$$
\end{lemma}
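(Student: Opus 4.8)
The plan is to reduce the statement entirely to Theorem~\ref{hyperplane section} together with the point count of Theorem~\ref{nondeg point}. By Theorem~\ref{hyperplane section}, exactly one of two situations occurs for the hyperplane $\Sigma$. If $\Sigma$ is \emph{not} tangent to $\U_n$, then $\Sigma\cap\U_n$ is a non-degenerate Hermitian variety $\U_{n-1}$ inside $\Sigma\cong\PP^{n-1}$, so $|\Sigma(\Fqt)\cap\U_n|=|\U_{n-1}(\Fqt)|$. If $\Sigma$ is tangent to $\U_n$ at a point $P\in\U_n(\Fqt)$, then $\Sigma\cap\U_n$ is the cone with vertex $P$ over a non-degenerate Hermitian variety $\U_{n-2}$ contained in some hyperplane $\Pi$ of $\Sigma$ with $P\notin\Pi$.

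The first step is to count the $\Fqt$-rational points of this cone. Projection from $P$ identifies the cone minus $\{P\}$ with the base, so the cone is the union of $P$ with the lines $\overline{PQ}$, $Q$ ranging over $\U_{n-2}(\Fqt)$, and distinct such lines meet only at $P$. Each line carries $q^2+1$ rational points, exactly one of which is $P$, hence $|\Sigma(\Fqt)\cap\U_n| = 1 + q^2\,|\U_{n-2}(\Fqt)|$ in the tangent case.

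The second step is to compare the two candidate values. Writing $\epsilon=(-1)^n$ and substituting the formula of Theorem~\ref{nondeg point}, so that $|\U_{n-1}(\Fqt)|=\frac{(q^{n-1}+\epsilon)(q^{n}-\epsilon)}{q^{2}-1}$ and $|\U_{n-2}(\Fqt)|=\frac{(q^{n-2}-\epsilon)(q^{n-1}+\epsilon)}{q^{2}-1}$, one factors $(q^{n-1}+\epsilon)$ out of the leading terms and uses $(q^{n}-\epsilon)-q^{2}(q^{n-2}-\epsilon)=\epsilon(q^{2}-1)$ to obtain
\[
|\U_{n-1}(\Fqt)| - \bigl(q^2\,|\U_{n-2}(\Fqt)|+1\bigr) = (-1)^n\, q^{n-1}.
\]
Thus when $n$ is even the non-tangent value exceeds the tangent value, so in both cases $|\Sigma(\Fqt)\cap\U_n|\leq |\U_{n-1}(\Fqt)|$; when $n$ is odd the tangent value is larger, so in both cases $|\Sigma(\Fqt)\cap\U_n|\leq q^2|\U_{n-2}(\Fqt)|+1$. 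This is precisely the asserted inequality.

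I do not expect a genuine obstacle here: the argument is a dichotomy handed to us by Theorem~\ref{hyperplane section} followed by two elementary point counts. The only points requiring a little care are the observation that the generating lines of the cone are pairwise disjoint away from the vertex (so no rational point is counted twice), and the clean execution of the algebraic simplification leading to the difference $(-1)^n q^{n-1}$; both are routine.
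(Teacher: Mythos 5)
Your proposal is correct and follows exactly the route the paper takes: the paper's proof is the one-line observation that the lemma is a direct consequence of Theorem~\ref{hyperplane section} (the tangent/non-tangent dichotomy) and Theorem~\ref{nondeg point}, and you have simply made explicit the cone count $1+q^2|\U_{n-2}(\Fqt)|$ and the comparison $|\U_{n-1}(\Fqt)|-\bigl(q^2|\U_{n-2}(\Fqt)|+1\bigr)=(-1)^n q^{n-1}$, both of which check out.
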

\begin{proof}
    The Lemma is a direct consequence of Theorem \ref{nondeg point} and  Theorem \ref{hyperplane section}.
\end{proof}
We now revisit the intersection of a linear subspace with a non-degenerate Hermitian variety and observe its possible structures. Throughout this article, $\Pi_r \U_s$ denotes the \textit{cone} in $\PP^n$ with vertex the linear subspace $\Pi_r \subset \PP^n$ and base the non-degenerate Hermitian variety $\U_s \subset \Pi_s \cong \PP^s$, where $\Pi_r$ and $\Pi_s$ are disjoint linear subspaces of $\PP^n$, of dimensions $r$ and $s$, respectively, and together span $\PP^n$.

\begin{theorem}\cite[Lemma 2.20]{HT}\label{linear subspace section}
  Let $\Pi_m$ be a linear subspace of dimension $m$ in $\PP^n$ defined over $\Fqt$. Then there is a section $\Pi_m\cap \U_n=\Pi_v\U_s$ if and only if $$T:=n-2m+s\geq 0,$$ where $-1\leq v\leq m,\ -1\leq s\leq m$ and $s+v=m-1$. The section is non-degenerate when $v=-1$, and $\Pi_m$ lies entirely on $\U_n$ when $s=-1$.   
\end{theorem}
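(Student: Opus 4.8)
The plan is to exploit the fact that restricting a Hermitian form to a linear subspace again produces a Hermitian form in fewer variables, and then to read off everything from the Bose--Chakravarti normal form~\eqref{reduceherm}. Concretely, I would write $\U_n=V\!\left(x^THx^{(q)}\right)$ with $H$ a non-singular $(n+1)\times(n+1)$ Hermitian matrix, present $\Pi_m$ as the column span of an $(n+1)\times(m+1)$ matrix $A$ of rank $m+1$ over $\Fqt$ (points of $\Pi_m$ being $x=Ay$, $y\in\PP^m$), and note that the restricted form has matrix $B:=A^THA^{(q)}$, which one checks is again Hermitian (using $A^{(q^2)}=A$ and $H^{(q)}=H^T$) and may be zero. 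Setting $r=\mathrm{rank}\,B\in\{0,1,\dots,m+1\}$ and applying~\eqref{reduceherm} inside $\Pi_m\cong\PP^m$, the section $\Pi_m\cap\U_n=V\!\left(y^TBy^{(q)}\right)$ is $\Fqt$-equivalent to $V\!\left(y_0^{q+1}+\dots+y_{r-1}^{q+1}\right)$, i.e.\ the cone $\Pi_v\U_s$ with vertex the $(m-r)$-space $V(y_0,\dots,y_{r-1})$ and base the non-degenerate $\U_{r-1}$; so $v=m-r$, $s=r-1$, $s+v=m-1$, with $r=m+1$ (equivalently $v=-1$) the non-degenerate case and $r=0$ (equivalently $s=-1$) the case $B=0$, i.e.\ $\Pi_m\subseteq\U_n$.

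For the necessity of $T\ge 0$ I would argue as follows. As $H$ is invertible and the Frobenius $x\mapsto x^q$ preserves rank, $\mathrm{rank}(A^TH)=\mathrm{rank}(A^{(q)})=m+1$, so Sylvester's rank inequality yields
$$r=\mathrm{rank}\!\left(A^THA^{(q)}\right)\ \ge\ (m+1)+(m+1)-(n+1)\ =\ 2m+1-n,$$
hence $s=r-1\ge 2m-n$, which is exactly $T=n-2m+s\ge 0$. The same bound is visible geometrically: the vertex $\Pi_v$ of the cone is, up to a Frobenius twist, $\Pi_m\cap\Pi_m^{\perp}$, where $\Pi_m^{\perp}$ is the polar subspace of $\Pi_m$ with respect to $H$, of projective dimension $n-m-1$; since $\Pi_v\subseteq\Pi_m^{\perp}$ one gets $v\le n-m-1$, again equivalent to $T\ge 0$.

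For the converse — that every admissible pair $(v,s)$ is realized — I would work with $\U_n=V\!\left(x_0^{q+1}+\dots+x_n^{q+1}\right)$. Given $v,s$ with $s+v=m-1$, $-1\le v,s\le m$ and $T\ge 0$, put $r=s+1$; rearranging $T\ge 0$ gives $m+1-r\le\tfrac{n+1-r}{2}$, so $m+1-r\le\big\lfloor(n+1-r)/2\big\rfloor$, which is the largest possible dimension of a totally isotropic vector subspace of the non-degenerate Hermitian form carried by $\langle e_r,\dots,e_n\rangle$. I would then pick such a subspace $W\subseteq\langle e_r,\dots,e_n\rangle$ of vector dimension $m+1-r$ and take $\Pi_m$ to be the projective span of $\langle e_0,\dots,e_{r-1}\rangle$ and $W$: this has projective dimension $m$, and since $W$ is isotropic and orthogonal to $\langle e_0,\dots,e_{r-1}\rangle$, the restricted form has Gram matrix $\mathrm{diag}(I_r,0)$ of rank exactly $r$, whence $\Pi_m\cap\U_n=\Pi_v\U_s$, as desired.

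The only step that is not pure linear algebra over $\Fqt$ (with the involution $x\mapsto x^q$) is the one invoked in the last paragraph: that a non-degenerate Hermitian variety in $\PP^N$ over $\Fqt$ contains a linear subspace (generator) of dimension $\big\lfloor(N-1)/2\big\rfloor$, equivalently that the Witt index of the form is $\big\lfloor(N+1)/2\big\rfloor$. This is classical (cf.\ \cite{BC, HT}) and reduces to the surjectivity of the norm map $\Fqt^{\times}\to\Fq^{\times}$, so I do not anticipate a genuine obstacle; the main points demanding care are checking that $B=A^THA^{(q)}$ is truly Hermitian — so that the normal form~\eqref{reduceherm} may legitimately be applied inside $\Pi_m$ — and that its matrix rank agrees with the rank of the Hermitian variety it cuts out.
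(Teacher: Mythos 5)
Your proof is correct; the main thing to note is that there is no in-paper argument to compare it with, since the paper imports this statement verbatim from \cite[Lemma 2.20]{HT} without proof. Your route is the standard self-contained one: restrict the Hermitian form to $\Pi_m$ via $B=A^{T}HA^{(q)}$ (your check that $B^{T}=B^{(q)}$ is exactly right), read off the cone type $\Pi_{m-r}\U_{r-1}$ from the Bose--Chakravarti normal form \eqref{reduceherm} applied inside $\Pi_m\cong\PP^m$, obtain the necessity of $T\ge 0$ from Sylvester's rank inequality $r\ge 2(m+1)-(n+1)$, and realize every admissible pair $(v,s)$ by adjoining to $\langle e_0,\dots,e_{r-1}\rangle$ a totally isotropic subspace of the complementary non-degenerate form. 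Two small points, both of which you already flag, deserve a line in a polished write-up: (i) the pair $(v,s)$ attached to a section is well defined, so that $\operatorname{rank}B$ really determines it --- for $s\ge 1$ the vertex $\Pi_v$ is the singular locus of the cone over $\overline{\FF}_q$, and the low-rank cases are separated by dimension or by the point counts recorded in Remark \ref{cardinality of codim two}; and (ii) the Witt-index input, namely that a non-degenerate Hermitian form in $k$ variables over $\Fqt$ admits totally isotropic subspaces of vector dimension $\lfloor k/2\rfloor$, is classical (surjectivity of the norm map $\Fqt^{\times}\to\Fq^{\times}$ plus induction) and is precisely what \cite{BC} and \cite{HT} supply. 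With those two remarks made explicit, your argument is a complete proof of the cited lemma, covering the edge cases $v=-1$ (non-degenerate section, $r=m+1$) and $s=-1$ ($B=0$, i.e.\ $\Pi_m\subseteq\U_n$) correctly.
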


\begin{corollary}\label{codim two intersection}
 Let $\Pi_{n-2}$ be a linear subspace of codimension two in $\PP^n$ defined over $\Fqt$. Then there are three types of sections of $\U_n$ with $\Pi_{n-2}:$ $$ 
\Pi_{n-2}\cap \U_n=
\begin{cases}
 \U_{n-2}\\
 \Pi_0\U_{n-3}\\
 \Pi_1 \U_{n-4},
\end{cases}
$$ where $\Pi_0$ is an $\Fqt$-rational point of $\PP^n$ and $\Pi_1$ is a line in $\PP^n$ defined over $\Fqt$.   
\end{corollary}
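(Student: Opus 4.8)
The plan is to specialize Theorem~\ref{linear subspace section} to the case $m = n-2$ and simply read off the admissible values of the parameters $v$ and $s$. With $m = n-2$ the quantity $T = n - 2m + s$ becomes $T = s - n + 4$, so the existence condition $T \geq 0$ is equivalent to $s \geq n-4$. Combined with the constraint $-1 \leq s \leq m = n-2$ this forces $s \in \{\, n-4,\ n-3,\ n-2 \,\}$, and the relation $s + v = m-1 = n-3$ then pins down $v \in \{\, 1,\ 0,\ -1 \,\}$, respectively.

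Next I would translate each of these three cases into the stated normal form. When $v = -1$ the section is non-degenerate, i.e. $\Pi_{n-2} \cap \U_n = \U_{n-2}$. When $v = 0$ the vertex $\Pi_v = \Pi_0$ is a single point and the section is the cone $\Pi_0 \U_{n-3}$; when $v = 1$ the vertex $\Pi_v = \Pi_1$ is a line and the section is $\Pi_1 \U_{n-4}$. Finally I would remark that the vertex of such a cone is intrinsic — it is the radical of the Hermitian form on $\Pi_{n-2}$ obtained by restriction — hence is invariant under $\mathrm{Gal}(\overline{\FF}_q / \Fqt)$ and therefore defined over $\Fqt$, which justifies calling $\Pi_0$ an $\Fqt$-rational point and $\Pi_1$ a line defined over $\Fqt$.

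No genuine obstacle arises: the entire content of the corollary is the arithmetic bookkeeping of Theorem~\ref{linear subspace section}. The only point worth a remark is that the value $s = -1$, which would correspond to $\Pi_{n-2} \subseteq \U_n$, is excluded here since $s \geq n-4 \geq 0$ for $n \geq 4$; this is consistent with the classical fact that a non-degenerate $\U_n$ contains no linear subspace of dimension exceeding $\lfloor (n-1)/2 \rfloor$, which is less than $n-2$ once $n \geq 4$.
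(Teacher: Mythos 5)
Your proposal is correct and follows the paper's route exactly: the paper also deduces the corollary as a direct specialization of Theorem~\ref{linear subspace section} to $m=n-2$, and your bookkeeping ($T=s-n+4\geq 0$, $s+v=n-3$, hence $(s,v)\in\{(n-2,-1),(n-3,0),(n-4,1)\}$) is precisely the "straightforward consequence" the paper invokes. The added remarks on rationality of the vertex and on the exclusion of $s=-1$ for $n\geq 4$ are accurate but not needed.
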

\begin{proof}
    The proof is a straightforward consequence of Theorem \ref{linear subspace section}.
\end{proof}

\begin{remark}\label{cardinality of codim two}
\normalfont Geometrically, $\Pi_0\U_{n-3}$ is a cone over the non-degenerate Hermitian variety $\U_{n-3}$ with vertex at $\Pi_0$. Thus it follows from Theorem \ref{nondeg point} that $$|\Pi_0\U_{n-3}(\Fqt)|=1+q^2\frac{\big(q^{n-3}-(-1)^{n-3}\big)\left(q^{n-2}-(-1)^{n-2}\right)}{q^2-1}.$$ And since $\Pi_1\U_{n-4}$ is a cone with vertex as the line $\Pi_1$ and base as $\U_{n-4}$, therefore 
    $$|\Pi_1\U_{n-4}(\Fqt)|=q^2+1+q^4|\U_{n-4}(\Fqt)|.$$ For ease of computation, let us list the cardinalities of three types of sections of $\U_n$ with $\Pi_{n-2}$ explicitly.
    
   \textbf{Case 1:} \textit{$n$ is even.}
 \begin{align*}
   |\U_{n-2}(\Fqt)|&=\frac{q^{2n-3}-q^{n-1}+q^{n-2}-1}{q^2-1}\\
    |\Pi_0\U_{n-3}(\Fqt)|&=\frac{q^{2n-3}+q^n-q^{n-1}-1}{q^2-1}\\
    |\Pi_1 \U_{n-4}(\Fqt)|&=\frac{q^{2n-3}-q^{n+1}+q^n-1}{q^2-1}.
    \end{align*}

 \textbf{Case 2:} \textit{$n$ is odd.}
 \begin{align*}
    |\U_{n-2}(\Fqt)|&=\frac{q^{2n-3}+q^{n-1}-q^{n-2}-1}{q^2-1}\\
    |\Pi_0\U_{n-3}(\Fqt)|&=\frac{q^{2n-3}-q^n+q^{n-1}-1}{q^2-1}\\
    |\Pi_1 \U_{n-4}(\Fqt)|&=\frac{q^{2n-3}+q^{n+1}-q^n-1}{q^2-1}.
     \end{align*} 
    
    \noindent Therefore when $n$ is an even integer a straightway computation shows $$|\Pi_1 \U_{n-4}(\Fqt)|<|\U_{n-2}(\Fqt)|<|\Pi_0\U_{n-3}(\Fqt)|,$$ and when $n$ is odd then $$|\Pi_0\U_{n-3}(\Fqt)|< |\U_{n-2}(\Fqt)|<|\Pi_1\U_{n-4}(\Fqt)|.$$

\end{remark}

\subsection{Intersection of a non-degenerate Hermitian variety with a quadric}\label{intersection with quadric}

In this subsection, we will revisit some results concerning the quadric section of a non-degenerate Hermitian variety from \cite{BBFS}, which will be instrumental in later discussions.
\begin{definition}\cite[Definition 3.1]{BBFS}\label{An}
   Let $n$ be a positive integer. We define $A_4:=q^5+q^4+4q^3-3q+1$ and for $n\geq 5$ 
  $$ A_n=\begin{cases}
    q^2A_{n-1}-q^{n-2} \ \ \text{if} \ n \ \text{is even}\\
    q^2A_{n-1}+q^{n-2}+2q^{n-3}\ \ \text{if} \ n \ \text{is odd.}
\end{cases}$$
\end{definition}

The lemma below, which can be proved using induction on $n$, presents Definition \ref{An} in a more concise form.

\begin{lemma}\cite[Lemma 3.2]{BBFS}\label{Anconcise}
    For $n\geq 4$ we have $$A_n=q^{2n-8}A_4+\sum_{i=n-2}^{2n-7}q^i+2\delta q^{n-3},$$ with $\delta=0$ if $n$ is even and $\delta=1$ if $n$ is odd.
\end{lemma}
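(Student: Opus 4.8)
The statement to prove is Lemma 3.2 (\texttt{Anconcise}): the claim that the recursively-defined sequence $A_n$ satisfies the closed form
\[
A_n = q^{2n-8}A_4 + \sum_{i=n-2}^{2n-7} q^i + 2\delta q^{n-3},
\]
with $\delta = 0$ for $n$ even and $\delta = 1$ for $n$ odd.

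My plan is to prove this by induction on $n$, exactly as the paper hints ("which can be proved using induction on $n$"). The base case $n=4$ is immediate: the sum $\sum_{i=n-2}^{2n-7} q^i = \sum_{i=2}^{1} q^i$ is empty, and $\delta = 0$ since $4$ is even, so the right-hand side collapses to $q^{0}A_4 = A_4$, matching Definition 3.1. For the inductive step I would assume the formula holds for $n-1$ and verify it for $n$, splitting into the two parity cases dictated by the recursion in Definition \ref{An}. The key computation is bookkeeping on geometric-series tails: multiplying the inductive hypothesis for $A_{n-1}$ by $q^2$ shifts $q^{2(n-1)-8}A_4 = q^{2n-10}A_4$ to $q^{2n-8}A_4$ (the desired leading term) and shifts the sum $\sum_{i=n-3}^{2n-9} q^i$ to $\sum_{i=n-1}^{2n-7} q^i$, and also carries the boundary term $2\delta' q^{n-4}$ to $2\delta' q^{n-2}$, where $\delta'$ is the parity indicator of $n-1$ (so $\delta' = 1-\delta$). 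Then one adds the correction term from the recursion.

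Concretely, in the even case ($\delta = 0$, $\delta' = 1$): $A_n = q^2 A_{n-1} - q^{n-2} = q^{2n-8}A_4 + \sum_{i=n-1}^{2n-7} q^i + 2q^{n-2} - q^{n-2} = q^{2n-8}A_4 + \sum_{i=n-1}^{2n-7} q^i + q^{n-2} = q^{2n-8}A_4 + \sum_{i=n-2}^{2n-7} q^i$, which is the claimed formula with $\delta = 0$. In the odd case ($\delta = 1$, $\delta' = 0$): $A_n = q^2 A_{n-1} + q^{n-2} + 2q^{n-3} = q^{2n-8}A_4 + \sum_{i=n-1}^{2n-7} q^i + 0 + q^{n-2} + 2q^{n-3} = q^{2n-8}A_4 + \sum_{i=n-2}^{2n-7} q^i + 2q^{n-3}$, which is the claimed formula with $\delta = 1$. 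In both cases the induction closes.

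There is essentially no obstacle here; the only thing requiring care is the convention for empty sums at the boundary (ensuring the base case $n=4$ is interpreted correctly, and that the step $n=4 \to n=5$ where the sum first becomes non-empty is handled by the same computation) and keeping track of the swap $\delta \leftrightarrow \delta' = 1 - \delta$ when passing from $n-1$ to $n$. One should also double-check the index shift on the summation range: $n-1$ has range $[(n-1)-2,\ 2(n-1)-7] = [n-3,\ 2n-9]$, and multiplying by $q^2$ gives $[n-1,\ 2n-7]$, so only the single term $q^{n-2}$ is missing from the target range $[n-2,\ 2n-7]$ — which is precisely what the correction terms in the recursion supply. Thus the proof is a routine verification, and I would present it compactly by treating the two parity cases in turn.
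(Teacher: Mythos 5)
Your induction argument is correct, and it is exactly the route the paper indicates: the lemma is quoted from \cite{BBFS} with the remark that it "can be proved using induction on $n$", and your parity-split verification of the recursion in Definition~\ref{variables}'s counterpart (Definition~\ref{An}) is that induction carried out in full, with the base case and index-shift bookkeeping handled correctly. Nothing further is needed.
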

 The next theorem is the key to prove the Conjecture \ref{ELXC} for the quadric case.
\begin{theorem}\cite[Theorem 3.3]{BBFS}
 Let $\Q$ be a quadric in $\PP^n$ defined over $\Fqt$ with $n\geq 4$. If $$|\Q(\Fqt)\cap \U_n|>A_n,$$ then $\Q$ is union of two hyperplanes defined over $\Fqt$.   
\end{theorem}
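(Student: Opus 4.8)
The plan is to argue by contraposition: assuming that $\Q$ is \emph{not} a union of two hyperplanes defined over $\Fqt$, I would prove the bound $|\Q(\Fqt)\cap\U_n|\le A_n$. The natural framework is the projective classification of quadrics of $\PP^n$ defined over $\Fqt$ by rank and type. Once the reducible rank-$2$ quadrics that split over $\Fqt$ (the two-hyperplane case) are removed, the quadrics still to be treated are: the repeated hyperplane $V(L^2)$ of rank $1$; the irreducible rank-$2$ quadric, which is the cone with vertex a linear space $\Pi_{n-2}$ over a pair of conjugate points of a line; and the quadrics of rank $r\ge 3$, that is the cones $\Pi_{n-r}\,\Q'$ with vertex $\Pi_{n-r}$ over a non-degenerate quadric $\Q'$ in a complementary $\PP^{r-1}$, the non-degenerate quadric itself being the case $r=n+1$. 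For each such $\Q$ I would establish $|\Q(\Fqt)\cap\U_n|\le A_n$, with equality detecting the extremal non-splitting quadric.

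The degenerate extremes are quick. For the repeated hyperplane, $V(L^2)\cap\U_n=V(L)\cap\U_n$, so Lemma~\ref{Hermitian} bounds the count by $|\U_{n-1}(\Fqt)|$ or $q^2|\U_{n-2}(\Fqt)|+1$; comparison with the closed form of Lemma~\ref{Anconcise} shows this lies far below $A_n$. For the irreducible rank-$2$ cone I would split the intersection according to the vertex: the points lying in $\Pi_{n-2}\cap\U_n$ are counted by Corollary~\ref{codim two intersection} and Remark~\ref{cardinality of codim two}, while the remaining points lie on the generators over the two conjugate points and contribute a controlled amount, so the total again stays under $A_n$.

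The heart of the argument is the case $r\ge 3$, which I would handle uniformly by induction on $n$, mirroring the recursion of Definition~\ref{An}. Slice $\Q$ by a hyperplane $\Sigma$ of $\PP^n$ defined over $\Fqt$: then $\Sigma\cap\Q$ is a quadric of $\Sigma\cong\PP^{n-1}$, and by Theorem~\ref{hyperplane section} the section $\Sigma\cap\U_n$ is either a non-degenerate $\U_{n-1}$ or a tangent cone $P\,\U_{n-2}$. Whenever $\Sigma\cap\Q$ is not a union of two hyperplanes of $\Sigma$, the inductive hypothesis gives $|\Sigma\cap\Q\cap\U_n|\le A_{n-1}$ (in the non-tangent case directly; in the tangent-cone case after accounting for the vertex $P$ via Theorem~\ref{linear subspace section}). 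Summing $|\Sigma\cap\Q\cap\U_n|$ over all hyperplanes $\Sigma$ and using that every point of $\PP^n(\Fqt)$ lies on the same number of hyperplanes converts these slice bounds into a global estimate; the factor $q^2$ and the parity-dependent corrections $-q^{n-2}$ and $q^{n-2}+2q^{n-3}$ of the recursion are forced by the Hermitian cardinalities of Theorem~\ref{nondeg point} together with how often $\Sigma$ is tangent. The compatibility with $A_n\sim q^{2n-8}A_4$ from Lemma~\ref{Anconcise} reflects that the extremal non-splitting quadric is the iterated cone, whose vertex grows one dimension per step, over the extremal quadric of $\PP^4$.

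Two points are the genuine obstacles. First, the base case $n=4$ does not come from the recursion and must be treated by hand: one lists the quadrics of $\PP^4(\Fqt)$ that are not two hyperplanes, computes each intersection with $\U_4$, and thereby both establishes the value $A_4=q^5+q^4+4q^3-3q+1$ and identifies the extremal quadric. Second, in the inductive step the delicate slices are exactly those $\Sigma$ for which $\Sigma\cap\Q$ degenerates to two hyperplanes of $\Sigma$ — for a non-degenerate $\Q$ these are the tangent hyperplanes, and for a cone they are the hyperplanes through the vertex — since the inductive bound $A_{n-1}$ does not apply there and the counts on such slices are largest. The crux is to bound the number of these exceptional hyperplanes and the excess they contribute, and to show that after the global averaging the weighted total still does not exceed $A_n$; this quantitative control is where the argument is most technical.
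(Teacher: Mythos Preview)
This theorem is not proved in the paper at all: it appears in the preliminaries (Section~\ref{intersection with quadric}) as a quoted result, attributed to \cite[Theorem~3.3]{BBFS}, and is used later as a black box in the proof of Theorem~\ref{threehyp}. There is therefore no ``paper's own proof'' to compare your proposal against.

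As for the proposal itself, what you have written is a plausible outline of the strategy actually used in \cite{BBFS} (classification of quadrics by rank and type, with an inductive slicing argument matching the recursion of Definition~\ref{An}), but it is only a sketch, and you yourself flag the two substantive gaps: the base case $n=4$, which requires the detailed case analysis carried out in \cite{ELX} and \cite{BBFS}, and the control of the ``exceptional'' hyperplane slices where $\Sigma\cap\Q$ splits into two hyperplanes. Neither of these is routine, and the second in particular is where most of the work in \cite{BBFS} lies; your proposal does not indicate how that control would actually be obtained. So as a proof this is incomplete, but as a plan it is pointed in the right direction --- just not something the present paper undertakes.
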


\subsection{Preliminaries from algebraic geometry}\label{Preli}

Here, we recall several fundamental results from algebraic geometry that will play a crucial role in subsequent sections. We will rely on concepts such as the dimension and degree of variety, as discussed in standard algebraic geometry textbooks, including Harris \cite{H}. We begin with the following definitions. A variety is said to have \textit{pure dimension} or be \textit{equidimensional} if all its irreducible components share the same dimension. Additionally, two equidimensional varieties $X,Y\subset \PP^n$ are said to \textit{intersect properly} if $\codim(X\cap Y)=\codim X+\codim Y$.
Notably, obtaining effective upper bounds for the number of rational points on varieties defined over a finite field depends on the degree and dimension of the variety. For this reason, the following Proposition from \cite{H} will be indispensable.

\begin{proposition}\cite[Cor. 18.5]{H}\label{deg}
Let $X$ and $Y$ be equidimensional varieties that intersect properly in $\PP^n$. Then 
$\deg (X \cap Y) \le \deg X \deg Y$.
    \end{proposition}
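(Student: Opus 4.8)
The statement is the refined B\'ezout inequality, so the plan is to prove it by reducing to the situation in which one of the two varieties meets a linear space in finitely many points, where the desired bound is essentially built into the definition of degree. Throughout write $a=\dim X$ and $b=\dim Y$, so that proper intersection means $\dim(X\cap Y)=e:=a+b-n\geq 0$ (the case $X\cap Y=\emptyset$ being trivial, as $\deg\emptyset=0$). First I would reduce to irreducible $X$ and $Y$: decomposing $X=\bigcup_i X_i$ and $Y=\bigcup_j Y_j$ into irreducible components, all of dimension $a$, respectively $b$, by equidimensionality, every top-dimensional component of $X\cap Y$ is a component of some $X_i\cap Y_j$, whence $\deg(X\cap Y)\leq\sum_{i,j}\deg(X_i\cap Y_j)$. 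Since $\deg X=\sum_i\deg X_i$ and $\deg Y=\sum_j\deg Y_j$, the irreducible case gives $\deg(X\cap Y)\leq\sum_{i,j}\deg X_i\deg Y_j=\deg X\deg Y$.

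Next I would cut $X$ down to complementary dimension. For a sufficiently general linear subspace $\Lambda\subset\PP^n$ of dimension $n-e$, the section $X':=X\cap\Lambda$ has dimension $a-e=n-b=\codim Y$ and satisfies $\deg X'=\deg X$, because intersecting a variety with a general linear space of codimension at most its dimension preserves the degree (if $e=0$ no cutting is needed and $X'=X$). For $\Lambda$ general I would simultaneously arrange that $\Lambda$ is generic for $X\cap Y$, so that $X'\cap Y=(X\cap Y)\cap\Lambda$ is a general linear section of $X\cap Y$ in complementary dimension, hence a finite set of exactly $\deg(X\cap Y)$ points. This reduces the claim to the assertion that \emph{if $X'$ and $Y$ have complementary dimensions and meet in finitely many points, then $|X'\cap Y|\leq\deg X'\cdot\deg Y$.}

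For this zero-dimensional core I would use the join construction. Embed two disjoint copies of $\PP^n$ as the coordinate subspaces $\{y=0\}$ and $\{x=0\}$ inside $\PP^{2n+1}$ with coordinates $(x_0:\dots:x_n:y_0:\dots:y_n)$, place $X'$ in the first copy and $Y$ in the second, and let $J\subset\PP^{2n+1}$ be their join, the union of all lines joining a point of $X'$ to a point of $Y$. The standard properties of joins of varieties lying in complementary linear subspaces give $\dim J=\dim X'+\dim Y+1=n+1$ and $\deg J=\deg X'\cdot\deg Y$. Let $\Delta\subset\PP^{2n+1}$ be the linear subspace cut out by the $n+1$ equations $x_i=y_i$. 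A point of the line through $(p:0)$ and $(0:q)$ lies on $\Delta$ exactly when $p=q$ in $\PP^n$, so $J\cap\Delta$ is in bijection with $X'\cap Y$ and is therefore finite. As $\Delta$ is linear of codimension $n+1=\dim J$, this is an intersection of complementary dimension, and the basic property of degree that a variety meets a complementary-dimensional linear space in at most $\deg$-many points (when the intersection is finite) yields
$$|X'\cap Y|=|J\cap\Delta|\leq\deg J=\deg X'\cdot\deg Y,$$
which closes the reduction chain.

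The main obstacle is precisely this zero-dimensional step, and concretely the two facts I would need to pin down carefully: the join identities $\deg J=\deg X'\deg Y$ and $\dim J=\dim X'+\dim Y+1$, which I would derive from the multiprojective/Segre description of $J$ or cite directly, and the uniform bound $|Z\cap L|\leq\deg Z$ for an \emph{arbitrary} complementary-dimensional linear space $L$ meeting $Z$ finitely. The latter is where genericity must be handled with care: for a general $L$ the count equals $\deg Z$ by definition of degree, while for a special $L$ with finite intersection the count can only drop, which one reads off from the B\'ezout relation $\sum_{P}\mu_P(Z,L)=\deg Z$ for the complete intersection $L$, each multiplicity satisfying $\mu_P\geq 1$. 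Alternatively, the whole complementary-dimensional case is exactly classical B\'ezout with multiplicities, $\sum_P i(P,X'\cdot Y)=\deg X'\deg Y$ with each $i(P,X'\cdot Y)\geq 1$, so one may instead invoke that statement directly; the role of the join argument is to make explicit the passage to a \emph{linear}, hence multiplicity-tractable, second factor.
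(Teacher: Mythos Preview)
The paper does not supply its own proof of this proposition; it is stated with a bare citation to \cite[Cor.~18.5]{H} and used as a black box. Your sketch is correct and in fact reproduces essentially the argument Harris gives in Lecture~18: reduce to irreducible components, slice by a general linear space to reach complementary dimensions, then pass to the join $J(X',Y)\subset\PP^{2n+1}$ and intersect with the diagonal linear space $\Delta$, where the inequality $|J\cap\Delta|\le\deg J=\deg X'\cdot\deg Y$ follows from the very definition of degree together with upper-semicontinuity (or, equivalently, from classical B\'ezout with multiplicities). So there is nothing to contrast here beyond noting that you have written out the proof the paper chose to cite.
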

In particular, we have the following Proposition relating to the intersection of two hypersurfaces.

\begin{proposition}\label{coprime}
Let $F, G \in \Fqt[x_0, x_1, \dots, x_n]$ be non-constant homogeneous polynomials having no common factors. Then 
\begin{enumerate}
\item[(a)] $V(F)$ and $V(G)$ intersect properly. 
\item[(b)] $V(F) \cap V(G)$ is equidimensional of dimension $n-2$.
\item[(c)] $\deg (V(F) \cap V(G)) \le \deg F \deg G$. 
\end{enumerate}   
\end{proposition}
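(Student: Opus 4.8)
The plan is to prove (a) and (b) directly by reducing to irreducible components and invoking the projective dimension theorem, and then to obtain (c) as a formal consequence of Proposition~\ref{deg}. First I would record the elementary fact that a hypersurface in $\PP^n$ is equidimensional of dimension $n-1$: factoring $F=\prod_i P_i^{a_i}$ into irreducibles over $\overline{\FF}_q$, we have $V(F)=\bigcup_i V(P_i)$, and each $V(P_i)$ is irreducible of dimension $n-1$; likewise for $V(G)$. In particular $\codim V(F)=\codim V(G)=1$, which is one half of what (a) asks.

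Next, for (b), let $Z=V(P)$ be an irreducible component of $V(F)$, with $P$ an irreducible factor of $F$. The coprimality hypothesis says $P\nmid G$, hence $G$ does not vanish identically on $Z$, so $Z\cap V(G)$ is a \emph{proper} closed subset of the irreducible variety $Z$; consequently every irreducible component of $Z\cap V(G)$ has dimension at most $n-2$. For the reverse inequality I would apply the projective dimension theorem to $Z$ and an irreducible component $W$ of $V(G)$: since $\dim Z+\dim W=2(n-1)\ge n$ (we are of course in the range $n\ge 2$, the statement being trivial otherwise), $Z\cap W$ is nonempty and each of its components has dimension at least $2(n-1)-n=n-2$. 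Taking unions over the components $W$ of $V(G)$ and then over the finitely many components $Z$ of $V(F)$, we conclude that $V(F)\cap V(G)$ is nonempty and equidimensional of dimension exactly $n-2$. This is (b); and since then $\codim(V(F)\cap V(G))=2=\codim V(F)+\codim V(G)$, it is also (a).

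Finally, (c) follows formally: by (a) and (b), $V(F)$ and $V(G)$ are equidimensional varieties meeting properly, so Proposition~\ref{deg} yields $\deg(V(F)\cap V(G))\le \deg V(F)\,\deg V(G)$. To finish I would observe that $\deg V(F)\le \deg F$ — indeed $V(F)=\bigcup_i V(P_i)$ has degree $\sum_i\deg P_i\le\sum_i a_i\deg P_i=\deg F$, with equality exactly when $F$ is squarefree — and similarly $\deg V(G)\le\deg G$, which gives $\deg(V(F)\cap V(G))\le\deg F\,\deg G$.

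The argument is essentially routine; the one place where the hypothesis genuinely enters — and hence the step to treat with care — is the passage from ``$F,G$ have no common factor'' to ``no irreducible component of $V(F)$ is contained in $V(G)$,'' which is precisely what forces the codimensions to add rather than to drop. The only other thing to watch is the (harmless) discrepancy between $\deg V(F)$ and $\deg F$ when $F$ is not squarefree, which only strengthens the inequality in (c).
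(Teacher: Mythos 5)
Your argument is correct, but it follows a genuinely different route from the paper's. The paper disposes of the proposition by citation: part (a) is attributed to Krull's principal ideal theorem, part (b) to Macaulay's unmixedness theorem (via Zariski--Samuel), and part (c) to Proposition~\ref{deg}. You instead give a self-contained geometric argument: decompose $V(F)$ and $V(G)$ into irreducible components, use the coprimality hypothesis to see that no component $Z$ of $V(F)$ is contained in $V(G)$ (so every component of $Z\cap V(G)$ has dimension at most $n-2$), and invoke the projective dimension theorem for the lower bound $n-2$ on each component of $Z\cap W$; this yields (a) and (b) simultaneously, and (c) then follows from Proposition~\ref{deg} together with your observation that $\deg V(F)\le\deg F$. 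The trade-off: your version is elementary and transparent, resting only on the projective dimension theorem (itself a consequence of Krull's theorem), and it makes explicit two points the paper glosses over --- that every component of the finite union $\bigcup_{Z,W}(Z\cap W)$ is a component of some $Z\cap W$, and that $\deg V(F)$ can be strictly smaller than $\deg F$ when $F$ is not squarefree, which only strengthens (c); the paper's appeal to unmixedness is shorter and, in its algebraic form, gives the stronger scheme-theoretic statement that the ideal $(F,G)$ has no embedded components, but for the set-theoretic claim actually used in the paper both approaches are equally adequate.
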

\begin{proof}
    Part (a) is a well-known result in algebraic geometry and is typically established using Krull's principal ideal theorem. Part (b) is derived from Macaulay's unmixedness theorem (see \cite[Chapter 7, Theorem 26]{ZS}). Part (c) follows directly from Proposition \ref{deg}.
\end{proof}
We conclude this subsection by stating a result of Lachaud and Rolland on the number of rational points of a variety.
\begin{proposition}\cite[Prop. 2.3]{LR}\label{lac}
Let $X$ be an equidimensional projective (resp. affine) variety defined over a finite field $\Fq$. Furthermore, assume that $\dim X = \delta$ and $\deg X = d$.  Then
$$|X(\Fq)| \le d p_{\delta} \ \ \ \ \ \ (\mathrm{resp.} \ \  |X (\Fq)| \le d q^{\delta}),$$
where $p_{\delta} = 1 + q + \dots + q^{\delta}$.
\end{proposition}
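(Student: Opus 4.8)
The plan is to prove both inequalities simultaneously by induction on the dimension $\delta$, with the inductive step lowering the dimension by one through hyperplane sections and Proposition \ref{deg} supplying the degree bound for each section. The base case $\delta = 0$ is immediate: an equidimensional variety of dimension $0$ is a finite set of at most $\deg X = d$ points, and $p_0 = q^0 = 1$, so both bounds read $|X(\Fq)| \le d$.

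For the projective bound I would first reduce to the case that $X$ is irreducible and spans $\PP^n$. Since $|X(\Fq)| \le \sum_i |C_i(\Fq)|$ over the irreducible components $C_i$, and $\sum_i \deg C_i = d$ for an equidimensional variety, it suffices to bound each component; and replacing $\PP^n$ by the linear span of a component changes neither $\delta$, nor $d$, nor the target $d\,p_\delta$ (which is independent of the ambient dimension), so one may assume $X$ lies in no hyperplane. If $\delta = n$ then $X = \PP^n$, $d = 1$, and the bound is an equality; otherwise $\delta < n$ and I fix a rational point $P \in \PP^n(\Fq) \setminus X$. I then double-count incidences between $X(\Fq)$ and the $p_{n-1}$ rational hyperplanes through $P$: each such hyperplane $H$ satisfies $X \not\subseteq H$ (as $X$ spans $\PP^n$), so $X \cap H$ is equidimensional of dimension $\delta - 1$ with $\deg(X\cap H) \le d$ by Proposition \ref{deg}, whence $|(X\cap H)(\Fq)| \le d\,p_{\delta-1}$ by induction; on the other side, each point $x \in X(\Fq)$ (necessarily $x \ne P$) lies on exactly the $p_{n-2}$ hyperplanes containing the line $\overline{Px}$. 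Comparing the two counts gives
$$ p_{n-2}\,|X(\Fq)| = \sum_{H \ni P} |(X\cap H)(\Fq)| \le p_{n-1}\, d\, p_{\delta-1}, $$
and since $\delta \le n-1$ and the sequence $p_r/p_{r-1}$ is decreasing in $r$, we have $p_{n-1}/p_{n-2} \le p_\delta/p_{\delta-1}$, which rearranges to $|X(\Fq)| \le d\,p_\delta$ and closes the induction.

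For the affine bound I would once more reduce to an irreducible $X \subset \A^n$ of dimension $\delta \ge 1$ and degree $d$, and slice by the level sets of a single affine-linear form $\ell$ that is nonconstant on $X$. Such an $\Fq$-rational $\ell$ exists because the forms constant on $X$ form a single proper $\Fq$-subspace of the space of affine-linear forms, so its complement contains a rational point. The $q$ parallel hyperplanes $\{\ell = t\}_{t \in \Fq}$ partition $\A^n(\Fq)$; each fibre $X \cap \{\ell = t\}$ is equidimensional of dimension $\delta - 1$ and, passing to projective closures and applying Proposition \ref{deg}, of degree at most $d$, so by induction $|(X \cap \{\ell=t\})(\Fq)| \le d\,q^{\delta-1}$. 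Summing over the $q$ values of $t$ yields $|X(\Fq)| \le q \cdot d\,q^{\delta-1} = d\,q^\delta$, and summing over the components (using $\sum_i \deg C_i = d$) recovers the equidimensional statement.

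The main obstacle I anticipate is not the counting but the guarantee, over a possibly small field $\Fq$, of a slicing hyperplane (resp. linear form) that meets $X$ properly, i.e.\ contains no component. A single \emph{generic} hyperplane over $\overline{\FF}_q$ always works, but an $\Fq$-rational one need not if one insists on a direction good for all components at once, since a union of several proper $\Fq$-subspaces can exhaust every rational direction when the number of components exceeds $q$. The remedy threaded through both cases above is to treat one irreducible component at a time, so that at each stage only a \emph{single} proper subspace of bad forms must be avoided, for which a rational representative always exists; the degree bookkeeping then recombines additively. The remaining points are routine: that positive-dimensional projective varieties meet every hyperplane (so sections do not drop below dimension $\delta - 1$), that hyperplane sections of irreducible varieties are pure of codimension one, and the elementary monotonicity of $p_r/p_{r-1}$.
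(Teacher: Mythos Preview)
The paper does not prove this proposition: it is quoted from Lachaud--Rolland \cite{LR} and used as a black box, so there is no proof here to compare against. Assessed on its own merits, your induction-by-hyperplane-sections is a standard and essentially correct route to the bound, but there is one unjustified step in the projective half.

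You ``fix a rational point $P \in \PP^n(\Fq) \setminus X$'' without arguing that such a point exists. Over a small field this is not automatic: nothing established up to that line rules out an irreducible $X$ of dimension $\delta<n$ passing through every $\Fq$-point of $\PP^n$ (B\'ezout with a rational line only forces $d \ge q+1$, not a contradiction), and you cannot appeal to the bound currently being proved to manufacture such a $P$.

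The repair is painless. Take $P$ to be \emph{any} $\Fq$-point of $\PP^n$; since $X$ spans $\PP^n$, every hyperplane $H \ni P$ still satisfies $X \not\subset H$, so $X\cap H$ is equidimensional of dimension $\delta-1$ and degree at most $d$, and the inductive hypothesis applies. If $P \in X$, then in the incidence count $P$ lies on all $p_{n-1}$ hyperplanes through $P$ rather than on $p_{n-2}$, so
\[
p_{n-2}\bigl(|X(\Fq)|-1\bigr) + p_{n-1} \;\le\; p_{n-1}\,d\,p_{\delta-1},
\]
and since $p_{n-1} > p_{n-2}$ this yields a bound strictly smaller than your $P \notin X$ estimate; in particular $|X(\Fq)| \le d\,p_\delta$ follows a fortiori by the same monotonicity of $p_r/p_{r-1}$. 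The affine argument needs no such adjustment, since a nonconstant $\Fq$-rational affine-linear form on a positive-dimensional irreducible affine variety always exists, exactly as you note.
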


\section{Cubic section of a non-degenerate Hermitian variety}\label{sec: cubic section}
In this section, we introduce a sequence $\{B_n\}_{n\geq 4}$
  that will play an indispensable role in our analysis of intersections of Hermitian varieties and cubic hypersurfaces. This sequence is defined recursively, capturing structural properties that will be essential in the subsequent lemmas and proofs. The definition is as follows.
\begin{definition}\label{variables}\normalfont
Let $n$ be a positive integer. We define $B_4:=3(q^5+1)$ and for $n\geq 5$
$$ B_n:=\begin{cases}
    q^2B_{n-1}-q^{n-2} \ \ \text{if} \ n \ \text{is even}\\
    q^2B_{n-1}+3q^{n-2}+q^{n-3}\ \ \text{if} \ n \ \text{is odd.}
\end{cases}$$
\end{definition}

\begin{lemma}\label{form}
   For $n\geq 4$, 
   $$B_n=\begin{cases}
       3q^{2n-8}(q^5+1)+3\left(\sum_{i=1}^\frac{n-4}{2}q^{2i+n-3}\right) \ \ \text{if} \ n  \ \text{is even}\\
       
       3q^{2n-8}(q^5+1)+3\left(\sum_{i=1}^\frac{n-3}{2}q^{2i+n-4}\right)+q^{n-3} \ \ \text{if}  \ n  \ \text{is odd.}
   \end{cases}$$ 
   \end{lemma}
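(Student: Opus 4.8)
The plan is to establish the closed form by induction on $n$, treating the even and odd cases in tandem since the recursion couples them. The base case $n=4$ is immediate: the formula for even $n$ evaluates the empty sum $\sum_{i=1}^{0}$ to zero, leaving $3q^{0}(q^5+1)=3(q^5+1)=B_4$, which matches Definition \ref{variables}. For the inductive step I would assume the claimed formula holds for $B_{n-1}$ and compute $B_n$ directly from the recursion in Definition \ref{variables}, splitting into the two cases according to the parity of $n$.

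First, suppose $n$ is even, so $n-1$ is odd and by the induction hypothesis
$$B_{n-1}=3q^{2n-10}(q^5+1)+3\left(\sum_{i=1}^{\frac{n-4}{2}}q^{2i+n-5}\right)+q^{n-4}.$$
Then $B_n=q^2B_{n-1}-q^{n-2}$. Multiplying through by $q^2$ sends $3q^{2n-10}(q^5+1)$ to $3q^{2n-8}(q^5+1)$ and sends each term $q^{2i+n-5}$ to $q^{2i+n-3}$, so the sum becomes $3\sum_{i=1}^{(n-4)/2}q^{2i+n-3}$; the stray term $q^2\cdot q^{n-4}=q^{n-2}$ is cancelled exactly by the $-q^{n-2}$ in the recursion. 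This yields precisely the even-$n$ formula in the statement.

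Next, suppose $n$ is odd, so $n-1$ is even and by the induction hypothesis
$$B_{n-1}=3q^{2n-10}(q^5+1)+3\left(\sum_{i=1}^{\frac{n-5}{2}}q^{2i+n-4}\right).$$
Now $B_n=q^2B_{n-1}+3q^{n-2}+q^{n-3}$. Again $q^2\cdot 3q^{2n-10}(q^5+1)=3q^{2n-8}(q^5+1)$, and $q^2\cdot q^{2i+n-4}=q^{2i+n-2}$, which reindexes (via $i\mapsto i+1$) to $3\sum_{i=2}^{(n-3)/2}q^{2i+n-4}$. The added term $3q^{n-2}$ is exactly the missing $i=1$ term of the sum $3\sum_{i=1}^{(n-3)/2}q^{2i+n-4}$ (since $2\cdot 1+n-4=n-2$), so together they complete the sum to $3\sum_{i=1}^{(n-3)/2}q^{2i+n-4}$, and the leftover $q^{n-3}$ is the desired trailing term. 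This matches the odd-$n$ formula and closes the induction.

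There is no substantive obstacle here; the only point requiring care is the bookkeeping of the summation indices and the verification that the correction terms $\pm q^{n-2}$, $q^{n-3}$ in the recursion line up exactly with the shift in exponents produced by multiplying by $q^2$ — in particular checking that the empty-sum conventions at $n=4,5$ are consistent with the stated formula. One should also note in passing that this is the exact cubic analogue of Lemma \ref{Anconcise} for the sequence $\{A_n\}$, and the same induction structure applies.
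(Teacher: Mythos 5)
Your proof is correct and takes essentially the same route as the paper's: induction on $n$ using the recursion of Definition \ref{variables}, with the identical parity case split and the same cancellation/reindexing of the shifted sums (the paper simply checks $n=4,5$ directly as base cases before the inductive step).
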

   \begin{proof}
       Clearly, the assertion is true for $n=4,5$. Suppose the assertion is true for $n-1$. We distinguish two cases:

        {\bf Case 1:} \textit{n is even}. It follows from Definition \ref{variables}, that $B_n=q^2B_{n-1}-q^{n-2}$. Therefore, 
        \begin{align*}
            B_n &=q^2 \left[3q^{2n-10}(q^5+1)+3\sum_{i=1}^\frac{n-4}{2}q^{2i+n-5}+q^{n-4}\right]-q^{n-2} \ \ \text{(by induction hypothesis)}\\
                &=3q^{2n-8}(q^5+1)+3\left(\sum_{i=1}^\frac{n-4}{2}q^{2i+n-3}\right)+q^{n-2}-q^{n-2}\\
                &=3q^{2n-8}(q^5+1)+ 3\left(\sum_{i=1}^\frac{n-4}{2}q^{2i+n-3}\right).
        \end{align*}
        
        {\bf Case 2:} \textit{$n$ is odd.} Then by Definition \ref{variables}, $B_n=q^2B_{n-1}+3q^{n-2}+q^{n-3}$. Hence,
        \begin{align*}
            B_n &=q^2 \left[3q^{2n-10}(q^5+1)+3\sum_{i=1}^\frac{n-5}{2}q^{2i+n-4}\right]+3q^{n-2}+q^{n-3} \ \  \text{(using induction hypothesis)}\\
                &=3q^{2n-8}(q^5+1)+3\left(\sum_{i=1}^\frac{n-5}{2}q^{2i+n-2}\right)+3q^{n-2}+q^{n-3}\\
                &=3q^{2n-8}(q^5+1)+ 3\left(\sum_{i=1}^\frac{n-3}{2}q^{2i+n-4}\right)+q^{n-3}.
        \end{align*}
      The assertion now follows from induction. 
   \end{proof}
   
   \noindent The following corollary is a consequence of Lemma \ref{form}.
\begin{corollary}\label{inq}
 Let $n\geq 5$. Then $$B_{n-1}\begin{cases}
     > q^{2n-5}+q^{2n-6} \ \ \text{if $n$ is even}\\
     < 3q^{2n-5}+q^{2n-6}\ \ \text{if $n$ is odd.}
 \end{cases}$$
\end{corollary}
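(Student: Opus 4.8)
The plan is to read off the leading terms of $B_{n-1}$ from the closed-form expression supplied by Lemma \ref{form} and compare them with the relevant powers of $q$; since all the coefficients appearing in Lemma \ref{form} are positive, both inequalities should reduce to tracking the two or three highest powers of $q$. First I would split into the two parity cases according to whether $n$ is even or odd, noting that the parity of $n-1$ is the opposite, so the roles of the two formulas in Lemma \ref{form} get swapped relative to a naive substitution.

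In the case $n$ even, $n-1$ is odd, so Lemma \ref{form} gives
$$B_{n-1}=3q^{2(n-1)-8}(q^5+1)+3\left(\sum_{i=1}^{\frac{n-4}{2}}q^{2i+(n-1)-4}\right)+q^{(n-1)-3}=3q^{2n-10}(q^5+1)+3\sum_{i=1}^{\frac{n-4}{2}}q^{2i+n-5}+q^{n-4}.$$
The leading term of $3q^{2n-10}(q^5+1)$ is $3q^{2n-5}$, and the top term of the sum $3\sum q^{2i+n-5}$ occurs at $i=\frac{n-4}{2}$, namely $3q^{2n-9}$. So $B_{n-1}=3q^{2n-5}+3q^{2n-10}+(\text{lower-order terms from the sum})$, which is manifestly larger than $q^{2n-5}+q^{2n-6}$ once $n\geq 5$ (indeed the single term $3q^{2n-5}$ already dominates $q^{2n-5}+q^{2n-6}$ for $q\geq 2$). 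I would just exhibit the inequality $3q^{2n-5}>q^{2n-5}+q^{2n-6}$ and absorb everything else into the slack.

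In the case $n$ odd, $n-1$ is even, so Lemma \ref{form} gives
$$B_{n-1}=3q^{2(n-1)-8}(q^5+1)+3\left(\sum_{i=1}^{\frac{(n-1)-4}{2}}q^{2i+(n-1)-3}\right)=3q^{2n-10}(q^5+1)+3\sum_{i=1}^{\frac{n-5}{2}}q^{2i+n-4}.$$
Here the leading term is again $3q^{2n-5}$ from $3q^{2n-10}\cdot q^5$, and the top term of the sum, at $i=\frac{n-5}{2}$, is $3q^{2n-9}$; everything after the leading term is $O(q^{2n-9})$. Thus $B_{n-1}=3q^{2n-5}+3q^{2n-10}+(\text{terms of order} \le q^{2n-9})<3q^{2n-5}+q^{2n-6}$ provided $q\geq 2$, since $3q^{2n-10}$ plus a few terms of size at most $3q^{2n-9}$ is comfortably below $q^{2n-6}$ (e.g. bound the tail crudely by $n\cdot 3q^{2n-9}$ and check $3q^{2n-10}+3nq^{2n-9}<q^{2n-6}$; for $q\geq 2$ and all $n\geq 5$ this holds, and one can also verify the borderline small cases $n=5,7$ by hand). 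The main (and only) obstacle is bookkeeping: being careful that the parity swap between $n$ and $n-1$ sends each case to the opposite branch of Lemma \ref{form}, and choosing a tail bound loose enough to be one line yet tight enough to land below $q^{2n-6}$; no conceptual difficulty is expected, and I would likely dispose of the finitely many smallest $n$ by direct computation if a uniform estimate is awkward to state cleanly.
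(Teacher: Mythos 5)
Your even case is correct and is exactly the paper's argument: after the parity swap, $B_{n-1}=3q^{2n-10}(q^5+1)+3\sum_{i=1}^{(n-4)/2}q^{2i+n-5}+q^{n-4}$, and $3q^{2n-5}>q^{2n-5}+q^{2n-6}$ already suffices, all remaining terms being positive. Your odd case also starts from the correct closed form and the correct strategy (isolate the leading term $3q^{2n-5}$ and show the remainder is below $q^{2n-6}$), which is what the paper does as well.

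However, the specific tail estimate you propose for the odd case is a genuine gap: the inequality $3q^{2n-10}+3nq^{2n-9}<q^{2n-6}$ does \emph{not} hold ``for $q\geq 2$ and all $n\geq 5$''. Bounding each of the roughly $n/2$ terms of $3\sum_{i=1}^{(n-5)/2}q^{2i+n-4}$ by the top term costs a factor that grows linearly in $n$, while the headroom between $q^{2n-9}$ and $q^{2n-6}$ is only the fixed factor $q^3$; so for any fixed $q$ your displayed bound fails as soon as $3n\geq q^3$ (already at $n=5$ for $q=2$, and for all $n\geq 115$ even when $q=7$). Checking ``finitely many smallest $n$ by hand'' cannot repair this, since for each fixed $q$ there are infinitely many failing $n$. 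The repair is to use the geometric structure of the tail rather than a term count: either sum it exactly, as the paper does, obtaining
\[
(3q^{2n-5}+q^{2n-6})-B_{n-1}=\frac{1}{q^2-1}\Bigl[q^{2n-10}\bigl(q^6-q^4-3q^3-3q^2+3\bigr)+3q^{n-2}\Bigr]>0,
\]
or bound it by $3\sum_{i=1}^{(n-5)/2}q^{2i+n-4}=\frac{3(q^{2n-7}-q^{n-2})}{q^2-1}\leq \frac{3q^2}{q^2-1}\,q^{2n-9}\leq 4q^{2n-9}$, after which $3q^{2n-10}+4q^{2n-9}<q^{2n-6}$ holds for all $q\geq 2$ uniformly in $n$. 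With that substitution your argument matches the paper's proof.
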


 \begin{proof}
    The proof is divided into two cases:

     {\bf Case 1:} \textit{$n$ is even}. From Lemma \ref{form} it follows that 
     \begin{align*}
         B_{n-1} &= q^{2n-10}B_4+3\left(\sum_{i=1}^\frac{n-4}{2}q^{2i+n-5}\right)+q^{n-4}\\
                &= 3q^{2n-5}+3q^{2n-10}+3\left(\sum_{i=1}^\frac{n-4}{2}q^{2i+n-5}\right)+q^{n-4}\\
                &> q^{2n-5}+q^{2n-6}.
     \end{align*}
     The last inequality follows since $3q^{2n-5}>q^{2n-5}+q^{2n-6}$.

        {\bf Case 2:} \textit{$n$ is odd}. It follows from Lemma \ref{form} that 
        \begin{align*}
            (3q^{2n-5}+q^{2n-6})-B_{n-1}&=(3q^{2n-5}+q^{2n-6})-q^{2n-10}B_4-3\left(\sum_{i=1}^\frac{n-5}{2}q^{2i+n-4}\right)\\
                 &=q^{2n-6}-3q^{2n-10}-3\left(\frac{q^{2n-7}-q^{n-2}}{q^2-1}\right)\\
                 &=\frac{1}{q^2-1}\Bigg[q^{2n-10}(q^6-q^4-3q^3-3q^2+3)+3q^{n-2}\Bigg]\\
                 &>0.
        \end{align*}
        The last inequality follows as $q^6-q^4-3q^3-3q^2+3>0$. 
         \end{proof}

\begin{lemma}\label{ineqality1}
    Let $\Sigma$ be a hyperplane in $\PP^n$ defined over $\Fqt$, where $n\geq 4$. Then $$|\Sigma(\Fqt)\cap \U_n|+A_n<B_n,$$ for $q\geq 3$.
\end{lemma}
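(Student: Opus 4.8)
The plan is to estimate each term on the left-hand side separately and compare with the recursive definition of $B_n$ from Definition~\ref{variables}. First I would bound $|\Sigma(\Fqt)\cap \U_n|$ using Lemma~\ref{Hermitian}: this quantity is at most $|\U_{n-1}(\Fqt)|$ when $n$ is even and at most $q^2|\U_{n-2}(\Fqt)|+1$ when $n$ is odd. Using Theorem~\ref{nondeg point} these can be rewritten as explicit rational functions of $q$; in both cases the dominant term is $q^{2n-3}/(q^2-1)$, i.e.\ roughly $q^{2n-5}+q^{2n-7}+\cdots$. Similarly, $A_n$ is governed by Lemma~\ref{Anconcise}, which gives $A_n=q^{2n-8}A_4+\sum_{i=n-2}^{2n-7}q^i+2\delta q^{n-3}$ with $A_4=q^5+q^4+4q^3-3q+1$, so the leading term of $A_n$ is $q^{2n-3}$. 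Meanwhile $B_n$, by Lemma~\ref{form}, has leading term $3q^{2n-3}$. Thus heuristically the left side behaves like $q^{2n-3}+q^{2n-5}+\cdots \;+\; q^{2n-3}+\cdots \approx 2q^{2n-3}$, which is comfortably below $3q^{2n-3}\approx B_n$, and the inequality should hold with substantial room once $q\geq 3$.

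To make this rigorous I would proceed by induction on $n$, mirroring the structure of Definitions~\ref{An} and \ref{variables}. For the base case $n=4$ I would simply substitute: $B_4=3(q^5+1)$, $A_4=q^5+q^4+4q^3-3q+1$, and $|\Sigma(\Fqt)\cap\U_4|\le |\U_3(\Fqt)|=(q^2+1)(q^3+1)=q^5+q^3+q^2+1$ by Lemma~\ref{Hermitian} and Theorem~\ref{nondeg point}; then $|\Sigma(\Fqt)\cap\U_4|+A_4 = 2q^5+q^4+5q^3+q^2-3q+2$, and one checks $B_4-(|\Sigma(\Fqt)\cap\U_4|+A_4) = q^5-q^4-5q^3-q^2+3q+1>0$ for $q\ge 3$ (a one-variable polynomial positivity check). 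For the inductive step I would use the recursions $A_n = q^2A_{n-1}+\varepsilon_n$ and $B_n = q^2B_{n-1}+\varepsilon_n'$ (with the appropriate sign/size of the lower-order corrections depending on the parity of $n$), together with the fact that $|\Sigma(\Fqt)\cap\U_n|$ is itself, up to lower-order terms and the parity-dependent adjustments in Theorem~\ref{hyperplane section}, roughly $q^2$ times $|\Sigma'(\Fqt)\cap\U_{n-1}|$ for a hyperplane $\Sigma'$ of $\PP^{n-1}$. More precisely, using Theorem~\ref{nondeg point} one has clean identities like $|\U_{n-1}(\Fqt)| = q^2|\U_{n-3}(\Fqt)| + q^{n-1}+(-1)^{n-1}q^{n-2}$ type relations, which let one write the even-$n$ bound $|\U_{n-1}(\Fqt)|$ in terms of $q^2\cdot(\text{odd-}(n-1)\text{ bound})$ plus an explicit correction; combining this with the induction hypothesis $|\Sigma'(\Fqt)\cap\U_{n-1}|+A_{n-1}<B_{n-1}$ and multiplying through by $q^2$ reduces the claim to showing the accumulated correction terms satisfy the right inequality — again a polynomial-in-$q$ comparison that is slack for $q\ge 3$. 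Corollary~\ref{inq} is presumably tailored precisely to furnish the bound on $B_{n-1}$ needed to absorb these corrections, so I would invoke it at this point.

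The main obstacle I anticipate is bookkeeping rather than conceptual: the parity of $n$ affects (i) which branch of Lemma~\ref{Hermitian} applies to $|\Sigma(\Fqt)\cap\U_n|$, (ii) which branch of the $A_n$ recursion applies, and (iii) which branch of the $B_n$ recursion applies, and these parities are correlated but the correction terms $-q^{n-2}$ versus $+3q^{n-2}+q^{n-3}$ (for $B$) and $-q^{n-2}$ versus $+q^{n-2}+2q^{n-3}$ (for $A$) must be tracked carefully so that the net lower-order discrepancy always has the favorable sign. I would therefore split the induction step cleanly into the case $n$ even and the case $n$ odd, in each case reducing — after one application of the induction hypothesis and the relevant recursions — to a single explicit inequality between polynomials in $q$, and verify each such inequality holds for all $q\ge 3$ by checking that the leading coefficient is positive and the value at $q=3$ is positive (or by an elementary term-by-term domination argument). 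No step requires anything beyond Theorem~\ref{nondeg point}, Lemma~\ref{Hermitian}, the definitions of $A_n$ and $B_n$, Lemma~\ref{form}, and Corollary~\ref{inq}, all of which are available.
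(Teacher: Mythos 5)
Your proposal is correct, but it reaches the inequality by a different mechanism than the paper. The paper does not induct: after bounding $|\Sigma(\Fqt)\cap\U_n|$ via Lemma~\ref{Hermitian} exactly as you do, it substitutes the closed forms of $A_n$ (Lemma~\ref{Anconcise}) and $B_n$ (Lemma~\ref{form}) and verifies directly, separately for $n$ even and $n$ odd, that $B_n-A_n-|\Sigma(\Fqt)\cap\U_n|$ is a positive explicit polynomial expression in $q$ (over $q^2-1$) for $q\ge 3$. Your induction on $n$ through the recursions of Definitions~\ref{An} and~\ref{variables} does close: writing $H_n$ for the bound in Lemma~\ref{Hermitian}, in the even step $A_n$ and $B_n$ receive the same correction $-q^{n-2}$, so the claim reduces to $H_n\le q^2H_{n-1}$, i.e.\ $|\U_{n-1}(\Fqt)|\le q^4|\U_{n-3}(\Fqt)|+q^2$, which holds with margin $q^n-q^{n-1}-1$; in the odd step it reduces to $H_n\le q^2H_{n-1}+2q^{n-2}-q^{n-3}$, which is immediate since $H_n=q^2|\U_{n-2}(\Fqt)|+1=q^2H_{n-1}+1$; and your base-case computation $B_4-\bigl(|\U_3(\Fqt)|+A_4\bigr)=q^5-q^4-5q^3-q^2+3q+1>0$ for $q\ge3$ is correct. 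Two small corrections to your sketch: the ``clean identity'' $|\U_{n-1}(\Fqt)|=q^2|\U_{n-3}(\Fqt)|+q^{n-1}+(-1)^{n-1}q^{n-2}$ is false as written (the correct relation is $|\U_{m}(\Fqt)|=q^2|\U_{m-2}(\Fqt)|+q^{2m-1}+1$), although the inequalities you actually need are the ones above and they do hold; and Corollary~\ref{inq} is not needed here at all --- in the paper it serves Proposition~\ref{nontangent}, not this lemma --- nor does your argument require it. The trade-off between the two routes: the paper pays with one heavier polynomial verification per parity but needs no auxiliary comparisons, while your induction replaces that with a trivial base case plus two light correction-term inequalities, at the cost of carefully checking that the parity branches of the $A$- and $B$-recursions and of Lemma~\ref{Hermitian} interlock in the favorable way.
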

    \begin{proof}
      We consider two cases:
      
     {\bf Case 1:} \textit{$n$ is even}. In this case Lemma \ref{Hermitian} entails that $|\Sigma(\Fqt)\cap \U_n|\leq |\U_{n-1}(\Fqt)|=\frac{1}{q^2-1}(q^{2n-1}+q^n-q^{n-1}-1)$. Therefore using the expressions of $A_n$ and $B_n$ from Lemma \ref{Anconcise} and from Lemma \ref{form} respectively, we conclude 
     \begin{align*}
         B_n-A_n-|\Sigma(\Fqt)\cap \U_n| 
         &\geq q^{2n-8}(B_4-A_4)+3\left(\sum_{i=1}^\frac{n-4}{2}q^{2i+n-3}\right)-\sum_{i=n-2}^{2n-7}q^i \\
        & \hspace{3cm}  -\frac{q^{2n-1}+q^n-q^{n-1}-1}{q^2-1}\\
        &=q^{2n-8}(2q^5-q^4-4q^3+3q+2)+3\left(\frac{q^{2n-5}-q^{n-1}}{q^2-1}\right)\\
        &\  \  \ \  -\frac{(q+1)(q^{2n-6}-q^{n-2})}{q^2-1}-\frac{q^{2n-1}+q^n-q^{n-1}-1}{q^2-1}\\
        &=\frac{1}{q^2-1}\Big[q^{2n-8}\left(q^7-q^6-6q^5+q^4+9q^3+q^2-3q-2\right)\\
       &\hspace{3cm} -q^{n-2}(q^2+q-1)+1\Big]\\
       &>0.
     \end{align*}
    
The above inequality is easy to check for $n=4$, and for $n\geq 6$ it follows since $$(q^7-q^6-6q^5+q^4+9q^3+q^2-3q-2)-(q^2+q-1)>0, \ \ \text{for $q\geq 3$}.$$

     {\bf Case 2:} \textit{$n$ is odd}. From Lemma \ref{Hermitian} it follows that $|\Sigma(\Fqt)\cap \U_n|\leq q^2|\U_{n-2}(\Fqt)|+1=\frac{q^{2n-1}+q^{n+1}-q^n-1}{q^2-1}$. Consequently, 
     \begin{align*}
          B_n-A_n-|\Sigma(\Fqt)\cap \U_n|
          &\geq q^{2n-8}(B_4-A_4)+3\left(\sum_{i=1}^\frac{n-3}{2}q^{2i+n-4}\right)+q^{n-3}-\sum_{i=n-2}^{2n-7}q^i \\
        & \hspace{3cm} -2q^{n-3} -\frac{q^{2n-1}+q^{n+1}-q^{n}-1}{q^2-1}\\
        &=q^{2n-8}(2q^5-q^4-4q^3+3q+2)+3\left(\frac{q^{2n-5}-q^{n-2}}{q^2-1}\right)-q^{n-3}\\
        &\  \  \ \  -\frac{(q+1)(q^{2n-6}-q^{n-2})}{q^2-1}-\frac{q^{2n-1}+q^{n+1}-q^n-1}{q^2-1}\\
        &=\frac{1}{q^2-1}\Big[q^{2n-8}(q^7-q^6-6q^5+q^4+9q^3+q^2-3q-2)\\
        &\hspace{3cm} -q^{n-3}(q^4-q^3+2q-1)+1\Big]\\
        & >0.
     \end{align*}
     The last inequality follows since  
     \begin{align*}
     &(q^7-q^6-6q^5+q^4+9q^3+q^2-3q-2)-(q^4-q^3+2q-1)\\
     &=q^7-q^6-6q^5+10q^3+q^2-5q-1\\
     &>0,\ \ \text{for $q\geq 3$.}
     \end{align*} Combining the two cases together, the lemma follows. 
    \end{proof}

\begin{theorem}\label{casefour}
Let $q\geq 7$ and $\C_4$ be a cubic threefold in $\PP^4$ defined over $\Fqt$. If $$|\C_4(\Fqt)\cap \U_4|> B_4,$$ then $\C_4$ contains a hyperplane in $\PP^4$ defined over $\Fqt$. 
\end{theorem}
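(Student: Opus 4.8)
The plan is to prove the contrapositive: if the cubic $\C_4=V(F)$ contains no hyperplane defined over $\Fqt$, then $|\C_4(\Fqt)\cap\U_4|\le B_4=3(q^5+1)$. As $\deg F=3$, the hypothesis says precisely that $F$ is irreducible over $\Fqt$. If $F$ is moreover not absolutely irreducible, its irreducible factors over $\overline{\FF}_q$ are permuted transitively by the Galois group and have a common degree dividing $3$, so $F$ is a product of three conjugate linear forms over $\mathbb{F}_{q^6}$; then $\C_4(\Fqt)$ lies in the linear subspace cut out by all three, which has dimension at most $2$ (otherwise the three forms coincide and $F$ is reducible over $\Fqt$), and Corollary~\ref{codim two intersection} with Remark~\ref{cardinality of codim two} give $|\C_4(\Fqt)\cap\U_4|\le q^3+q^2+1<B_4$. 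So we may assume $F$ absolutely irreducible.

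Write $\U_4=V(G)$ with $G$ the Hermitian form, absolutely irreducible of degree $q+1\ge 8>3$; then $F$ and $G$ share no factor, so by Proposition~\ref{coprime} the variety $\mathcal{Z}:=\C_4\cap\U_4$ is equidimensional of dimension $2$ with $\deg\mathcal{Z}\le 3(q+1)$. I will use two structural facts. \emph{(i)~No irreducible component of $\mathcal{Z}$ lies in a hyperplane.} By Theorem~\ref{hyperplane section}, a hyperplane section of $\U_4$ is either the nondegenerate $\U_3$ or an irreducible cone $P\cdot\U_2$, each an irreducible surface of degree $q+1$; hence a component $W$ of $\mathcal{Z}$ contained in a hyperplane $\Sigma$ would equal $\U_4\cap\Sigma$, and so would be a surface component of degree $q+1>3$ of the cubic surface $\C_4\cap\Sigma$ (impossible) unless $\Sigma\subseteq\C_4$, which contradicts the irreducibility of $F$ or the assumption on $\C_4$. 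Thus every component of $\mathcal{Z}$ spans $\PP^4$ and so has degree at least $3$; in particular $\mathcal{Z}$ has no plane component. \emph{(ii)~$\mathcal{Z}$ has no cone component.} The rulings of a cone in $\U_4$ with vertex $V$ are $\U_4$-lines through $V$, which sweep out only the irreducible curve $\U_2^{(V)}$ inside $T_V\U_4$; so such a cone would be the entire tangent section $T_V\U_4\cap\U_4$, of degree $q+1$, and we conclude as in~(i).

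The heart of the estimate is a double count of incident pairs $(P,\ell)$ with $P\in\mathcal{Z}(\Fqt)$ and $\ell$ an $\Fqt$-line of $\U_4$ through $P$. By Theorem~\ref{hyperplane section}(a) each $\Fqt$-point of $\U_4$ lies on exactly $|\U_2(\Fqt)|=q^3+1$ such lines, while $\U_4$ carries $M:=(q^3+1)(q^5+1)$ of them altogether; and an $\Fqt$-line $\ell\subseteq\U_4$ meets $\C_4$ in all of $\ell$ when $\ell\subseteq\C_4$ and in at most $3$ points otherwise. Writing $L$ for the number of $\Fqt$-lines of $\U_4$ contained in $\C_4$, this gives
\[
(q^3+1)\,|\C_4(\Fqt)\cap\U_4|=\sum_{\ell\subseteq\U_4}|\mathcal{Z}(\Fqt)\cap\ell|\le(q^2+1)L+3(M-L),
\]
and, since $3M=(q^3+1)B_4$,
\[
|\C_4(\Fqt)\cap\U_4|\le B_4+\frac{(q^2-2)L}{q^3+1}.
\]
In particular the theorem follows at once if $\C_4$ contains no $\Fqt$-line of $\U_4$; more generally it is enough to show $L\le q$, since then the fractional term is below $1$ and the integer on the left is at most $B_4$.

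It remains to bound $L$, and this is where the real difficulty lies. A general hyperplane section together with Proposition~\ref{lac} already yields $L\le\deg\mathcal{Z}\cdot(q^2+1)\le 3(q+1)(q^2+1)$, which is far too weak. Instead one analyzes the components $W_1,\dots,W_k$ of $\mathcal{Z}$: every $\Fqt$-line of $\U_4$ on $\C_4$ lies in some $W_j$, and by fact~(ii) no $W_j$ is a cone, so any $W_j$ carrying infinitely many lines is a scroll, ruled by a one-parameter family of $\U_4$-lines. One then has to (a) use the geometry of $\U_4$ and the bound $\sum\deg W_j\le 3(q+1)$ to control which scrolls can lie on $\U_4$, and (b) sharpen the incidence inequality inside each ruled component — exploiting that through a general point of a scroll passes a \emph{unique} ruling line, so that the estimate $|\mathcal{Z}\cap\ell|\le 3$ is wildly lossy there. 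Carrying this out should absorb the contribution of the shared lines and force $|\C_4(\Fqt)\cap\U_4|\le B_4$. I expect this scroll analysis, enabled by the exclusion of cone components in~(ii), to be the main obstacle and the technical core of the proof.
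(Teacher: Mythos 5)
Your reductions are sound and genuinely different in flavour from the paper's argument: the contrapositive and the reduction to an absolutely irreducible $F$, the equidimensionality and degree bound for $\mathcal{Z}=\C_4\cap\U_4$ via Proposition~\ref{coprime}, the exclusion of planar and cone components, and the incidence count over the $(q^3+1)(q^5+1)$ generators of $\U_4$, which correctly yields
$|\mathcal{Z}(\Fqt)|\le B_4+\tfrac{(q^2-2)L}{q^3+1}$,
where $L$ is the number of $\Fqt$-lines of $\U_4$ contained in $\C_4$. (A minor caveat: components of $\mathcal{Z}$ live over $\overline{\FF}_q$, so in facts~(i) and~(ii) you invoke Theorem~\ref{hyperplane section} for hyperplanes that need not be $\Fqt$-rational; the irreducibility of arbitrary hyperplane sections of $\U_4$ is true but needs a separate argument, e.g.\ via injectivity of the Gauss map of the Fermat hypersurface of degree $q+1$.)

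The genuine gap is the bound on $L$, which you explicitly defer: the ``scroll analysis'' is only a sketch, and it is exactly the technical core of the problem. Worse, the intermediate target $L\le q$ is not attainable in general: if $\C_4$ contains a plane $\Pi$ defined over $\Fqt$ whose section $\Pi\cap\U_4$ is a cone $\Pi_0\U_1$, then $\C_4$ already contains $q+1$ generators of $\U_4$ without containing any hyperplane, and $(q^2-2)(q+1)>q^3+1$ for $q\ge 3$, so the integrality trick fails; a ruled (scroll) component of $\mathcal{Z}$ can carry on the order of $q^2$ generators, making the correction term of order $q$. So the incidence inequality with the crude estimate $|\mathcal{Z}\cap\ell|\le 3$ on non-contained lines cannot close the argument by itself, and the refined per-component estimate you gesture at is precisely what is missing. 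The paper avoids this entirely: its proof of Theorem~\ref{casefour} is a short case analysis quoting the earlier work \cite{DM} (Propositions 4.1, 4.2, 4.3, 4.8 and the proof of Proposition 5.6 there), distinguishing whether $\C_4$ contains no generator of $\U_4$, contains a plane, contains a tangent line, or has plane sections with exactly three, respectively at most two, generators, each case coming with an explicit bound at most $3(q^5+1)$ for $q\ge 7$. Your configurations with many generators (planes in $\C_4$, ruled components) are exactly the cases those cited propositions are designed to handle, so without an analogous analysis your proposal does not yet prove the theorem.
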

\begin{proof}
Let us suppose that $\C_4$ does not contain any hyperplane defined over $\Fqt$. If $\C_4$ contains no generator of $\U_4$ or, if $\C_4$ contains a plane defined over $\Fqt$, then by \cite[Proposition 4.1 and Proposition 4.2]{DM} we conclude that $|\C_4(\Fqt)\cap \U_4|\leq 3(q^5+1)$. Thus, we may assume that $\C_4$ contains a generator of $\U_4$, but no plane in $\PP^4$, defined over $\Fqt$. If $\C_4$ contains a tangent line to $\U_4$ then \cite[Proposition 4.8]{DM} entails that $|\C_4(\Fqt)\cap \U_4|\leq 2q^5+2q^4+3q^3+2q^2+1\leq 3(q^5+1)$, as $q\geq 7$. Therefore for any plane $\Pi$ in $\PP^4$ defined over $\Fqt$, the cubic plane curve $\C_4\cap \Pi$ contains at most three generators of $\U_4$. If there exists a plane $\Pi$ in $\PP^4$ defined over $\Fqt$ such that $\C_4\cap \Pi$ contains exactly three generators, then \cite[Proposition 4.3]{DM} implies that $|\C_4(\Fqt)\cap \U_4|\leq 3q^5-q^4+3q^3+3q^2+1\leq 3(q^5+1)$, since $q\geq 7$. Furthermore, for $q\geq 7$, if for any plane $\Pi$ defined over $\Fqt$, the cubic curve $\C_4(\Fqt)\cap \Pi$ contains at most two generators of $\U_4$, then from the proof of \cite[Proposition 5.6]{DM} we observe that $|\C_4(\Fqt)\cap \U_4|\leq 2q^5+5q^4-q^3+q^2-3q-4\leq 3(q^5+1)$. Thus if $\C_4$ contains no hyperplane in $\PP^4$ defined over $\Fqt$, then $|\C_4(\Fqt)\cap \U_4|\leq 3(q^5+1)=B_4$.  
\end{proof}

\begin{lemma}\label{numberoftangent}
 Let $P$ be an $\Fqt$-rational point of $\U_n$. The number of tangent hyperplanes to $\U_n$ passing through $P$ and defined over $\Fqt$ is $q^2|\U_{n-2}(\Fqt)|+1.$   
\end{lemma}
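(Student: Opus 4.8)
The plan is to reduce the counting problem to the polarity associated with the Hermitian form, and then read off the answer from the structure of a tangent hyperplane section. Write $\U_n=V\bigl(x^THx^{(q)}\bigr)$ with $H$ a non-singular Hermitian matrix. A direct gradient computation gives $\nabla\bigl(x^THx^{(q)}\bigr)=Hx^{(q)}$, so the tangent hyperplane to $\U_n$ at a point $Q\in\U_n(\Fqt)$ is exactly the polar hyperplane $\Sigma_Q:=V\bigl(x^THQ^{(q)}\bigr)$ of Remark~\ref{polar}. Since $H$ is non-singular, $Q\mapsto\Sigma_Q$ is a bijection from $\PP^n(\Fqt)$ onto the set of $\Fqt$-rational hyperplanes of $\PP^n$, and Theorem~\ref{hyperplane section}(a) shows that $\Sigma_Q\cap\U_n$ is a cone with vertex $Q$, hence singular precisely at $Q$; as $\Sigma_Q$ and $\U_n$ are defined over $\Fqt$, this forces the point of tangency of any $\Fqt$-rational tangent hyperplane to be $\Fqt$-rational and unique. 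Thus $Q\mapsto\Sigma_Q$ identifies $\U_n(\Fqt)$ with the set of $\Fqt$-rational tangent hyperplanes of $\U_n$.

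The key step is a reciprocity identity: for $P,Q\in\PP^n(\Fqt)$ one has $\bigl(P^THQ^{(q)}\bigr)^{q}=Q^THP^{(q)}$, because raising $P^THQ^{(q)}=\sum_{i,j}P_ih_{ij}Q_j^q$ to the $q$-th power and using $h_{ij}^{q}=h_{ji}$ (from $H^T=H^{(q)}$) together with $Q_j^{q^2}=Q_j$ yields $\sum_{i,j}P_i^{q}h_{ji}Q_j=Q^THP^{(q)}$. Hence $P^THQ^{(q)}=0$ if and only if $Q^THP^{(q)}=0$. Applying this with $P$ fixed: the tangent hyperplane $\Sigma_Q$ at $Q\in\U_n(\Fqt)$ passes through $P$ iff $P^THQ^{(q)}=0$ iff $Q^THP^{(q)}=0$ iff $Q\in\Sigma_P$, the tangent hyperplane of $\U_n$ at $P$. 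Combining with the previous paragraph, the number of $\Fqt$-rational tangent hyperplanes to $\U_n$ through $P$ equals $|\Sigma_P(\Fqt)\cap\U_n|$.

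Finally, since $\Sigma_P$ is tangent to $\U_n$ at $P\in\U_n(\Fqt)$, Theorem~\ref{hyperplane section}(a) gives $\Sigma_P\cap\U_n=\Pi_0\U_{n-2}$, a cone with vertex $P$ over a non-degenerate Hermitian variety $\U_{n-2}$, and counting its points line by line through $P$ (exactly as in Remark~\ref{cardinality of codim two}, via Theorem~\ref{nondeg point}) yields $|\Sigma_P(\Fqt)\cap\U_n|=q^2|\U_{n-2}(\Fqt)|+1$, which is the claimed number. I do not expect a genuine obstacle here: the only thing to be careful about is the bookkeeping that distinct $\Fqt$-rational tangent hyperplanes through $P$ correspond bijectively to distinct points of $\Sigma_P(\Fqt)\cap\U_n$, and this is precisely the injectivity of the polarity map together with the uniqueness and $\Fqt$-rationality of the point of tangency established above; modulo these standard facts the argument is a short formal manipulation of the Hermitian form.
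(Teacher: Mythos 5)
Your proof is correct, and it has the same overall skeleton as the paper's: identify the $\Fqt$-rational tangent hyperplanes through $P$ with their points of tangency, use the symmetry ``$P\in T_Q(\U_n)\Leftrightarrow Q\in T_P(\U_n)$'' to turn the count into $|T_P(\U_n)\cap\U_n(\Fqt)|$, and then evaluate that cardinality as $q^2|\U_{n-2}(\Fqt)|+1$ via Theorem~\ref{hyperplane section}(a) and Theorem~\ref{nondeg point}. The genuine difference lies in how the symmetry is proved: the paper argues synthetically --- if $P\in T_Q(\U_n)$ then $P$ lies on the cone $T_Q(\U_n)\cap\U_n$ with vertex $Q$, so the line $PQ$ lies on $\U_n$ and hence in $T_P(\U_n)$, and symmetrically in the other direction --- whereas you derive it in one line from the sesquilinear reciprocity $\bigl(P^THQ^{(q)}\bigr)^q=Q^THP^{(q)}$ of the polarity of Remark~\ref{polar}. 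Your algebraic route has the added benefit of making explicit two pieces of bookkeeping the paper leaves implicit: injectivity of $Q\mapsto V\bigl(x^THQ^{(q)}\bigr)$ (so distinct tangency points give distinct tangent hyperplanes, and the count of hyperplanes really equals the count of points), and the fact that an $\Fqt$-rational tangent hyperplane has a unique, necessarily $\Fqt$-rational, point of tangency; the paper's route stays coordinate-free and only invokes the cone structure of tangent sections. Both arguments are complete and yield the stated value.
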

\begin{proof}
  For any point $Q\in \U_n(\Fqt)$, let $T_Q(\U_n)$ denote the tangent hyperplane to $\U_n$ at $Q$. Now consider the set $$\Gamma:=\{Q\in \U_n(\Fqt) :\ P\in T_Q(\U_n)\}.$$ 

 \textbf{Claim:} $\Gamma=T_P(\U_n)\cap \U_n(\Fqt)$.

 \textit{Proof of claim:} For any point $Q\in \U_n(\Fqt)$, it follows from Theorem \ref{hyperplane section} (a) that $T_Q(\U_n)\cap \U_n(\Fqt)$ is a cone over a non-degenerate Hermitian variety $\U_{n-2}$ with vertex at $Q$ and hence  $Q\in \Gamma$ entails that the line $PQ\subset \U_n$. Since any line $\ell$ with $P\in \ell \subset \U_n$ is contained in $T_P(\U_n)$, so the line $PQ\subset T_P(\U_n)$ and consequently $Q \in T_P(\U_n)\cap \U_n(\Fqt)$. Similarly, if $Q\in T_P(\U_n)\cap \U_n(\Fqt)$ then $PQ\subset \U_n$ and hence $P\in T_Q(\U_n)$. Finally, the Lemma follows from Theorem \ref{hyperplane section} (a). 
\end{proof}

\begin{proposition}\label{nontangent}
    Let $n\geq 5$ and $\C_n$ be a cubic hypersurface in $\PP^n$ defined over $\Fqt$. If $|\C_n(\Fqt)\cap \U_n|>B_n$, then there exists a hyperplane $\Sigma_0$ in $\PP^n$ defined over $\Fqt$ such that $\Sigma_0$ is non-tangent to $\U_n$ and $|\C_n(\Fqt)\cap \Sigma_0\cap \U_n|> B_{n-1}$. 
\end{proposition}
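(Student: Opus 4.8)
The plan is to run an averaging argument over all hyperplanes of $\PP^n(\Fqt)$, using the known point count of $\U_n$ and the fact that every point lies on exactly $p_{n-1} = 1 + q^2 + \dots + q^{2(n-1)}$ hyperplanes, while most of these hyperplanes are non-tangent. First I would fix a cubic $\C_n$ with $|\C_n(\Fqt)\cap\U_n| > B_n$ and suppose, for contradiction, that for every non-tangent hyperplane $\Sigma$ one has $|\C_n(\Fqt)\cap\Sigma\cap\U_n|\leq B_{n-1}$. I would then double-count incidences $(P,\Sigma)$ with $P\in\C_n(\Fqt)\cap\U_n$ and $P\in\Sigma$: summing over $P$ gives $|\C_n(\Fqt)\cap\U_n|$ times the number of hyperplanes through a point, $p_{n-1}$; summing over $\Sigma$, split the hyperplanes into tangent and non-tangent ones.

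For the tangent hyperplanes, the contribution is controlled by Lemma~\ref{numberoftangent}: the number of tangent hyperplanes through a \emph{fixed} $\Fqt$-point of $\U_n$ is $q^2|\U_{n-2}(\Fqt)|+1$, and each tangent hyperplane meets $\C_n(\Fqt)\cap\U_n$ in at most $|\Sigma(\Fqt)\cap\U_n|$ points. Equivalently, the total incidence mass coming from tangent hyperplanes is at most $|\C_n(\Fqt)\cap\U_n|\cdot(q^2|\U_{n-2}(\Fqt)|+1)$ — counting through the points of the intersection — since a point of $\C_n\cap\U_n$ lies on at most $q^2|\U_{n-2}(\Fqt)|+1$ tangent hyperplanes. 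For the non-tangent hyperplanes there are at most $|\PP^n(\Fqt)| = p_n$ of them (crudely all hyperplanes), each contributing at most $B_{n-1}$. Combining, I would obtain
\begin{equation*}
|\C_n(\Fqt)\cap\U_n|\cdot p_{n-1} \leq |\C_n(\Fqt)\cap\U_n|\cdot\big(q^2|\U_{n-2}(\Fqt)|+1\big) + p_n\cdot B_{n-1},
\end{equation*}
hence $|\C_n(\Fqt)\cap\U_n|\cdot\big(p_{n-1} - q^2|\U_{n-2}(\Fqt)| - 1\big) \leq p_n B_{n-1}$. The coefficient $p_{n-1} - q^2|\U_{n-2}(\Fqt)| - 1$ should be positive and of size roughly $q^{2n-3}$ (it is essentially $|\U_{n-1}(\Fqt)|$ minus a tangent-section count, by Lemma~\ref{Hermitian}), so this yields an explicit upper bound on $|\C_n(\Fqt)\cap\U_n|$; I would then show this bound is $\leq B_n$, contradicting the hypothesis. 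The arithmetic here is exactly of the flavour already carried out in Corollary~\ref{inq} and Lemma~\ref{ineqality1}, using the closed forms of $B_n$, $B_{n-1}$ from Lemma~\ref{form} and the formula for $|\U_m(\Fqt)|$ from Theorem~\ref{nondeg point}; the inequality $B_{n-1} < 3q^{2n-5}+q^{2n-6}$ (odd $n$) and $B_{n-1} > q^{2n-5}+q^{2n-6}$ (even $n$) from Corollary~\ref{inq} are presumably tailored precisely for this estimate, together with the recursion $B_n = q^2 B_{n-1} \pm (\text{lower order})$.

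The main obstacle I anticipate is making the final inequality go through in \emph{both} parity cases with the single hypothesis $q\geq 7$ (or $q\geq 3$, if that suffices here), since the dominant terms on the two sides are $q^{2n-3}\cdot|\C_n\cap\U_n|$ versus $q^{2n-2}\cdot B_{n-1}\approx q^{2n-2}\cdot 3q^{2n-5}$; one needs the slack between $B_n$ and $p_n B_{n-1}/(p_{n-1}-\dots)$ to be genuinely positive rather than merely asymptotic, which forces a careful bookkeeping of the secondary terms. A secondary subtlety is whether the crude bound ``at most $p_n$ non-tangent hyperplanes'' is wasteful enough to break the argument; if so, I would instead bound the number of non-tangent hyperplanes by $p_n - (\text{number of tangent hyperplanes})$, or note that the number of tangent hyperplanes to $\U_n$ is itself $|\U_n(\Fqt)|$ (each point of $\U_n$ has a unique tangent hyperplane), giving the sharper count $p_n - |\U_n(\Fqt)|$ of non-tangent hyperplanes, and redo the estimate. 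Finally, once the contradiction is reached, the proposition follows immediately: some non-tangent $\Sigma_0$ must satisfy $|\C_n(\Fqt)\cap\Sigma_0\cap\U_n| > B_{n-1}$.
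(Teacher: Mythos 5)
Your double-counting of point--hyperplane incidences, with Lemma~\ref{numberoftangent} controlling the tangent hyperplanes through each point of $\C_n(\Fqt)\cap\U_n$ and Corollary~\ref{inq} closing the arithmetic in the two parity cases, is essentially the paper's proof: the paper counts only incidences with non-tangent hyperplanes, which is algebraically the same as your ``all hyperplanes minus the tangent ones'' bookkeeping. One remark: your crude bound of $p_n$ non-tangent hyperplanes is indeed too wasteful (it leaves an excess of order $q\,B_{n-1}$, larger than the gap $B_n-q^2B_{n-1}$), so the sharper count $p_n-|\U_n(\Fqt)|$ that you propose as a fallback is exactly what the paper uses, and with it the estimate goes through.
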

\begin{proof}
     Suppose that for every non-tangent hyperplane $\Sigma$ to $\U_n$ the following inequality is satisfied $$|\C_n(\Fqt)\cap \Sigma \cap \U_n|\leq B_{n-1}.$$
Let us consider the incidence set: 
$$\I=\{(P,\Sigma): P\in \C_n(\Fqt)\cap \U_n, \Sigma\subset \PP^n\ \text{is a $\Fqt$-hyperplane non-tangent to $\U_n$}, P\in \Sigma\}.$$
We will count $|\I|$ in two ways. First, by Lemma~\ref{numberoftangent}, the number of tangent hyperplanes to $\U_n$ passing through an $\Fqt$-rational point of $\U_n$ is independent of the chosen point, so we see that
\begin{align*}
|\I|&=\sum_{P\in \C_n(\Fqt)\cap \U_n}\#\{\Sigma: P\in \Sigma\ \ \text{and $\Sigma$ is non-tangent to $\U_n$ } \}\\
    &= |\C_n(\Fqt)\cap \U_n|\bigg[\#\{\Sigma: P\in \Sigma \ , \Sigma \ \text{is a hyperplane}\}-\\
    & \hspace{4cm}\#\{\Sigma: P\in \Sigma \ , \Sigma \ \text{is a tangent hyperplane to $\U_n$}\}\bigg] \\
    &=|\C_n(\Fqt)\cap \U_n|\left[ \frac{q^{2n}-1}{q^2-1}-q^2|\U_{n-2}(\Fqt)|-1\right]\ \ (\text{using Lemma \ref{numberoftangent}})\\
    &>B_n\frac{q^{2n}-q^2-q^2(q^{n-2}-(-1)^{n-2})(q^{n-1}-(-1)^{n-1})}{q^2-1}.
\end{align*}
    On the other hand,
    \begin{align*}
        |\I|&=\sum_{{\substack{\Sigma, \\ \text{$\Sigma$ is non-tangent}\\ \text{to $\U_n$}}} }|\C_n(\Fqt)\cap \Sigma\cap \U_n|\\
        &\leq \left(|\PP^n(\Fqt)|-|\U_n(\Fqt)|\right) B_{n-1}\\
        &= \left[\frac{q^{2n+2}-1-(q^n-(-1)^n)(q^{n+1}-(-1)^{n+1})}{q^2-1}\right]B_{n-1}.
    \end{align*}
    Thus, comparing the two bounds on $\I$, we get
    \begin{equation}\label{double}
         B_n<\left[\frac{q^{2n+2}-1-(q^n-(-1)^n)(q^{n+1}-(-1)^{n+1})}{q^{2n}-q^2-q^2(q^{n-2}-(-1)^{n-2})(q^{n-1}-(-1)^{n-1})}\right]B_{n-1}.
    \end{equation}
    We now distinguish two cases:

    {\bf Case 1:} \textit{$n$ is even}. From Equation \eqref{double} we obtain
    \begin{align*}
        B_n &< \left[\frac{q^{2n+2}-1-(q^n-1)(q^{n+1}+1)}{q^{2n}-q^2-q^2(q^{n-2}-1)(q^{n-1}+1)}\right]B_{n-1}\\
        &=B_{n-1}q^2-B_{n-1}\frac{q^{n+3}-q^{n+2}-q^{n+1}+q^n}{q^{2n}-q^{2n-1}+q^{n+1}-q^n}\\
        &<B_{n-1}q^2-(q^{2n-5}+q^{2n-6})\frac{q^{n+3}-q^{n+2}-q^{n+1}+q^n}{q^{2n}-q^{2n-1}+q^{n+1}-q^n}\ \ (\text{using Corollary \ref{inq}})\\
        &=B_{n-1}q^2-\frac{q^{3n-2}-2q^{3n-4}+q^{3n-6}}{q^{2n}-q^{2n-1}+q^{n+1}-q^n}\\
        &=B_{n-1}q^2-q^{n-2}-\frac{q^{3n-3}-2q^{3n-4}+q^{3n-6}-q^{2n-1}+q^{2n-2}}{q^{2n}-q^{2n-1}+q^{n+1}-q^n}\\
        &<B_{n-1}q^2-q^{n-2}=B_n,
    \end{align*}
    which leads us to a contradiction.

    {\bf Case 2:} \textit{$n$ is odd}. Equation \eqref{double} entails that
    \begin{align*}
     B_n &< \left[\frac{q^{2n+2}-1-(q^n+1)(q^{n+1}-1)}{q^{2n}-q^2-q^2(q^{n-2}+1)(q^{n-1}-1)}\right]B_{n-1} \\  
     & =q^2B_{n-1}+B_{n-1}\frac{q^{n+3}-q^{n+2}-q^{n+1}+q^n}{q^{2n}-q^{2n-1}-q^{n+1}+q^n}\\
     &<q^2B_{n-1}+(3q^{2n-5}+q^{2n-6})\frac{q^{n+3}-q^{n+2}-q^{n+1}+q^n}{q^{2n}-q^{2n-1}-q^{n+1}+q^n}\ \ (\text{using Corollary \ref{inq}})\\
     &=q^2B_{n-1}+3\left[\frac{q^{3n-2}-q^{3n-3}-q^{3n-4}+q^{3n-5}}{q^{2n}-q^{2n-1}-q^{n+1}+q^n}\right]+\frac{q^{3n-3}-q^{3n-4}-q^{3n-5}+q^{3n-6}}{q^{2n}-q^{2n-1}-q^{n+1}+q^n}\\
     &=q^2B_{n-1}+3q^{n-2}+q^{n-3}-\frac{3q^{3n-4}-2q^{3n-5}-q^{3n-6}-3q^{2n-1}+2q^{2n-2}+q^{2n-3}}{q^{2n}-q^{2n-1}-q^{n+1}+q^n}\\
     &=q^2B_{n-1}+3q^{n-2}+q^{n-3}-\frac{(3q^2-2q-1)(q^{3n-6}-q^{2n-3})}{q^{2n}-q^{2n-1}-q^{n+1}+q^n}\\
     &<q^2B_{n-1}+3q^{n-2}+q^{n-3}=B_n,
    \end{align*}
    which is a contradiction to the definition of $B_n$. Thus, in both cases, we arrive at a contradiction. 
\end{proof}

\begin{lemma}\label{AffineLachaud}
 Let $\X$ be a hypersurface of degree $d$ in $\PP^n$ defined over $\Fqt$ with $d\leq q$ and $n\geq 3$. If $\Sigma$ is a hyperplane defined over $\Fqt$ such that $\Sigma\nsubseteq \X$, then $$|\X(\Fqt)\cap \U_n\cap (\Sigma\setminus \Pi)|\leq (d-1)(q+1)q^{2n-6},$$ for any $(n-2)$-dimensional subspace $\Pi$ of $\Sigma$ contained in $\X$.
\end{lemma}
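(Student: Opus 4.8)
The plan is to push everything into the hyperplane $\Sigma\cong\PP^{n-1}$, delete the hyperplane $\Pi$ of $\Sigma$ to land in an affine chart, and then apply the affine form of the Lachaud--Rolland estimate (Proposition~\ref{lac}).

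\textbf{Reduction to two hypersurfaces of $\Sigma$.} Let $F$ be a defining polynomial of $\X$. Since $\Sigma\nsubseteq\X$, after identifying $\Sigma$ with $\PP^{n-1}$ linearly the restriction $F|_\Sigma$ is a nonzero homogeneous polynomial of degree $d'\le d$. Writing $\Pi=V(L)$ for a linear form $L$ on $\Sigma$, the hypothesis $\Pi\subseteq\X\cap\Sigma=V(F|_\Sigma)$ forces the irreducible form $L$ to divide $F|_\Sigma$, say $F|_\Sigma=L\cdot G$ with $\deg G=d'-1\le d-1$. If $G$ is constant then $(\X\cap\Sigma)\setminus\Pi=\emptyset$ and we are done, so assume $\deg G\ge 1$. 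On $\Sigma\setminus\Pi$ one has $L\neq 0$, hence a point of $\X\cap\U_n\cap(\Sigma\setminus\Pi)$ lies on $V(G)$ and on $\U_n\cap\Sigma$. Let $h$ be a defining polynomial of the Hermitian variety $\U_n\cap\Sigma$ in $\Sigma$, which by Theorem~\ref{hyperplane section} has rank $n-1$ or $n$, so $\deg h=q+1$. This yields the inclusion
$$\X(\Fqt)\cap\U_n\cap(\Sigma\setminus\Pi)\ \subseteq\ \big(V(G)\cap V(h)\big)(\Fqt)\cap(\Sigma\setminus\Pi).$$

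\textbf{Coprimality and the dimension/degree of the intersection.} For $n\ge 4$ the rank of $\U_n\cap\Sigma$ is at least $n-1\ge 3$, so $h$ is absolutely irreducible by the remark following \eqref{reduceherm}; the same holds for $n=3$ when $\Sigma$ is non-tangent. Since $\deg G=d'-1\le q-1<q+1=\deg h$ with $h$ irreducible, $G$ and $h$ have no common factor. (The only leftover sub-case, $n=3$ with $\Sigma$ tangent, in which $\U_3\cap\Sigma$ is a union of $q+1$ lines through a point, is disposed of by a direct B\'ezout count in the plane $\Sigma$.) Granting coprimality, Proposition~\ref{coprime} applied inside $\Sigma\cong\PP^{n-1}$ shows that $V(G)\cap V(h)$ is equidimensional of dimension $(n-1)-2=n-3$ and that $\deg\big(V(G)\cap V(h)\big)\le\deg G\cdot\deg h\le(d-1)(q+1)$.

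\textbf{Affine count.} The complement $\Sigma\setminus\Pi=\Sigma\setminus V(L)$ is an affine space $\AA^{n-1}$, and $W:=\big(V(G)\cap V(h)\big)\cap\AA^{n-1}$ is an affine variety (defined over $\Fqt$) each of whose components has dimension at most $n-3$, of degree at most that of its projective closure in $\Sigma$, hence at most $(d-1)(q+1)$. Applying Proposition~\ref{lac} over the field $\Fqt$ (so the base $q$ of that proposition becomes $q^2$), and summing over top-dimensional components should $W$ fail to be equidimensional, gives $|W(\Fqt)|\le(d-1)(q+1)(q^2)^{n-3}=(d-1)(q+1)q^{2n-6}$. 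Combined with the inclusion above, this is the asserted bound. The step demanding the most care is the coprimality of $G$ and $h$: this is exactly where the hypothesis $d\le q$ (ensuring $\deg G<\deg h$) together with the absolute irreducibility of non-degenerate Hermitian forms of rank $\ge 3$ is used, and it is what prevents a full codimension-one slice of $\Sigma$ lying on $\U_n$ from being swallowed by $\X$.
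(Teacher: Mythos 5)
Your argument in the main regime is essentially the paper's own proof: restrict to $\Sigma$, factor out the linear form cutting out $\Pi$ so that on $\Sigma\setminus\Pi$ the points of $\X$ lie on a hypersurface $V(G)$ of degree at most $d-1$, note that $G$ shares no component with the degree-$(q+1)$ Hermitian section because $d\leq q$ and that section is absolutely irreducible, and then apply Proposition \ref{coprime} and the affine case of Proposition \ref{lac} over $\Fqt$ in the chart $\Sigma\setminus\Pi\cong\AA^{n-1}$ to get $(d-1)(q+1)(q^2)^{n-3}$. For $n\geq 4$, where $\U_n\cap\Sigma$ has rank at least $3$ and hence is irreducible whether or not $\Sigma$ is tangent, this is complete and correct, and that is precisely the range in which the paper actually uses the lemma (inside Proposition \ref{hyperplane}, only for $n\geq 5$).

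The one genuine flaw is your parenthetical disposal of the sub-case $n=3$ with $\Sigma$ tangent to $\U_3$. There $\U_3\cap\Sigma$ is a cone of $q+1$ lines through the point of tangency, and nothing prevents $G$ from vanishing identically on one of those lines, so a ``direct B\'ezout count'' in the plane is not available; in fact the asserted bound $(d-1)(q+1)$ can fail. For example, with $d=2$ take $\Sigma=T_P(\U_3)$, let $\ell\subset\U_3$ be one of the $q+1$ generators through $P$, let $\X$ be the union of a plane through $\ell$ distinct from $\Sigma$ and a second plane meeting $\Sigma$ in a line $m\neq\ell$, and take $\Pi=m$: then $\X(\Fqt)\cap\U_3\cap(\Sigma\setminus\Pi)$ contains the $q^2$ points of $\ell(\Fqt)\setminus m$, which exceeds $q+1$. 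So that sub-case cannot be rescued; the correct fix is to restrict the statement to $n\geq 4$ (or, for $n=3$, to non-tangent $\Sigma$). To be fair, the paper's proof has the same blind spot, since it asserts irreducibility of $\U_n\cap(\Sigma\setminus\Pi)$ for all $n\geq 3$, and because the lemma is only invoked for $n\geq 5$ none of the paper's results are affected; but your write-up should not claim the tangent-plane case follows from a B\'ezout count.
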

\begin{proof}
  Since $\Sigma \nsubseteq \X$, therefore $\Sigma\cap \X$ is a hypersurface of degree $d$ in $\Sigma\cong \PP^{n-1}$. Moreover, since $\Pi\subseteq \Sigma\cap \X$, we conclude that $(\Sigma\cap \X)\setminus \Pi$ is an affine hypersurface of degree at most $d-1$ in $\Sigma\setminus \Pi\cong \AA^{n-1}$. Furthermore, since $n\geq 3$ it follows that $\U_n\cap (\Sigma\setminus \Pi)$ is an irreducible affine hypersurface of degree $q+1$. As $d\leq q$, so $(\Sigma\cap \X)\setminus \Pi$ and $\U_n\cap (\Sigma\setminus \Pi)$ have no common components and consequently, their intersection is a complete intersection of degree at most $(d-1)(q+1)$ and $\dim \left(\X(\Fqt)\cap \U_n\cap (\Sigma\setminus \Pi)\right)=n-3$. Thus it follows from Proposition \ref{lac} that $|\X(\Fqt)\cap \U_n\cap (\Sigma\setminus \Pi)|\leq (d-1)(q+1)q^{2n-6}$.
\end{proof}

\begin{proposition}\label{hyperplane}
 Let $\C_n$ be a cubic hypersurface in $\PP^n$ defined over $\Fqt$ with $n\geq 4$ and $q\geq 7$. If $|\C_n(\Fqt)\cap \U_n|>B_n$, then $\C_n$ contains a hyperplane defined over $\Fqt$.   
\end{proposition}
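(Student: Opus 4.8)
The natural approach is induction on $n$, with the base case $n=4$ being exactly Theorem \ref{casefour}. So fix $n\geq 5$, assume the statement for cubic hypersurfaces in $\PP^{n-1}$, and let $\C_n$ be as in the hypothesis. First I would apply Proposition \ref{nontangent} to obtain a hyperplane $\Sigma_0\subset\PP^n$ over $\Fqt$, non-tangent to $\U_n$, with $|\C_n(\Fqt)\cap\Sigma_0\cap\U_n|>B_{n-1}$. If $\Sigma_0\subseteq\C_n$ we are done; otherwise $\C_{n-1}:=\C_n\cap\Sigma_0$ is a cubic hypersurface of $\Sigma_0\cong\PP^{n-1}$, and by Theorem \ref{hyperplane section}(b) the section $\U_{n-1}:=\Sigma_0\cap\U_n$ is a non-degenerate Hermitian variety in $\Sigma_0$. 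Since $|\C_{n-1}(\Fqt)\cap\U_{n-1}|>B_{n-1}$, the induction hypothesis produces a hyperplane $\Pi$ of $\Sigma_0$ contained in $\C_n$, i.e. an $(n-2)$-dimensional linear subspace of $\PP^n$ defined over $\Fqt$ lying on $\C_n$.

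The crux is then to promote this codimension-two linear space to a genuine hyperplane of $\PP^n$ inside $\C_n$. Suppose, for contradiction, that $\C_n$ contains no hyperplane over $\Fqt$. The $q^2+1$ hyperplanes of $\PP^n$ over $\Fqt$ through $\Pi$ cover $\PP^n$ and meet pairwise precisely in $\Pi$, so
$$|\C_n(\Fqt)\cap\U_n|=|\Pi(\Fqt)\cap\U_n|+\sum_{\Sigma\supseteq\Pi}|\C_n(\Fqt)\cap\U_n\cap(\Sigma\setminus\Pi)|.$$
No member $\Sigma$ of this pencil lies on $\C_n$, so Lemma \ref{AffineLachaud} (with $d=3\leq q$) bounds each of the $q^2+1$ summands by $2(q+1)q^{2n-6}$; and, since $\Pi$ has codimension two, Corollary \ref{codim two intersection} together with the cardinalities in Remark \ref{cardinality of codim two} gives $|\Pi(\Fqt)\cap\U_n|\leq|\Pi_0\U_{n-3}(\Fqt)|$ when $n$ is even and $|\Pi(\Fqt)\cap\U_n|\leq|\Pi_1\U_{n-4}(\Fqt)|$ when $n$ is odd. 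Hence
$$|\C_n(\Fqt)\cap\U_n|\leq|\Pi(\Fqt)\cap\U_n|+2(q^2+1)(q+1)q^{2n-6}.$$
It then remains to verify, using the closed form of $B_n$ from Lemma \ref{form}, that the right-hand side is at most $B_n$ for all $n\geq 5$ and $q\geq 7$; this is a routine polynomial comparison in each parity (the decisive terms being $3q^{2n-3}$ coming from $B_n$ against roughly $2q^{2n-3}$ from the affine contributions), and it contradicts the hypothesis $|\C_n(\Fqt)\cap\U_n|>B_n$. Therefore $\C_n$ must contain a hyperplane over $\Fqt$, which closes the induction.

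I expect the main obstacle to be precisely this last inequality $|\Pi(\Fqt)\cap\U_n|+2(q^2+1)(q+1)q^{2n-6}\leq B_n$: although the leading-term margin ($3q^{2n-3}$ versus $2q^{2n-3}$) is genuine, the lower-order terms --- in particular the contribution from $\Pi$, which is largest for the cone-type sections $\Pi_0\U_{n-3}$ and $\Pi_1\U_{n-4}$ --- must be controlled carefully and separately in each parity. The remaining steps are structurally clean: extracting a point-rich non-tangent hyperplane via Proposition \ref{nontangent}, descending by induction, and fibering $\PP^n$ over the pencil through $\Pi$; all of them are powered entirely by results already available in the excerpt, and the threshold $q\geq 7$ is in fact forced by the base case Theorem \ref{casefour} rather than by this promotion step.
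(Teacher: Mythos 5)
Your proposal is correct and follows essentially the same route as the paper: induction from the $n=4$ base case, extraction of a point-rich non-tangent hyperplane via Proposition \ref{nontangent}, descent to obtain an $(n-2)$-dimensional subspace $\Pi\subseteq\C_n$, and then the pencil-of-hyperplanes count through $\Pi$ using Lemma \ref{AffineLachaud} and the codimension-two section bounds from Remark \ref{cardinality of codim two}. The final parity-wise polynomial comparison you leave as routine is exactly the computation the paper carries out, and your identification of the $3q^{2n-3}$ versus $2q^{2n-3}$ leading terms (and of the base case as the real source of the $q\geq 7$ hypothesis) is accurate.
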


\begin{proof}
    We will prove the Proposition using induction on $n$. It follows from Theorem \ref{casefour} that the Proposition is true for $n=4$.  Suppose that the Proposition is true for $n-1$. It follows from the Proposition \ref{nontangent} that there exists a non-tangent hyperplane $\Sigma_0$ to $\U_n$ such that $|\C_n(\Fqt)\cap\Sigma_0\cap \U_n|>B_{n-1}$. Now, if $\Sigma_0\subseteq \C_n$, we are through. So we may suppose that $\Sigma_0\nsubseteq \C_n$. Therefore $\C_n\cap\Sigma_0$ is a cubic hypersurface, say $\C_{n-1}$ in $\Sigma_0\cong \PP^{n-1}$. Moreover, since $\Sigma_0$ is non-tangent to $\U_n$, it follows that $\Sigma_0\cap \U_n=\U_{n-1}$ is a non-degenerate Hermitian variety in $\Sigma_0$. Hence by induction hypothesis, $\C_{n-1}$ contains a hyperplane, say $\Pi$ of $\Sigma_0\cong \PP^{n-1}$ defined over $\Fqt$. Consequently, $\Pi$ is an $n-2$ dimensional linear subspace of $\PP^n$.

    Let us suppose that $\C_n$ contains no hyperplane $\PP^n$ defined over $\Fqt$. We denote
    $$\B(\Pi):=\{\Sigma: \Pi \subset \Sigma, \ \Sigma\ \text{is a hyperplane in $\PP^n$ defined over $\Fqt$ } \}.$$
    Note that, $|\B(\Pi)|=q^2+1$. For any $\Sigma \in \B(\Pi)$, since $\Sigma \nsubseteq \C_n$ it follows from Lemma \ref{AffineLachaud} that $|\C_n(\Fqt)\cap \U_n\cap (\Sigma\setminus \Pi)|\leq 2(q+1)q^{2n-6}$. Thus, we have
    \begin{align}
        |\C_n(\Fqt)\cap \U_n|&=|\C_n(\Fqt)\cap \U_n\cap \Pi|+\sum_{\Sigma\in \B(\Pi)}|\C_n(\Fqt)\cap \U_n\cap (\Sigma\setminus \Pi) |\nonumber\\
        &\leq |\Pi(\Fqt)\cap \U_n|+2(q^2+1)(q+1)q^{2n-6}\label{LRaffine}.
    \end{align}
   
    We now distinguish two cases:

    {\bf Case 1:} \textit{$n$ is even.} Therefore, Equation \eqref{LRaffine} entails that 
    \begin{align*}
     |\C_n(\Fqt)\cap \U_n|&\leq  \frac{q^{2n-3}+q^n-q^{n-1}-1}{q^2-1}+2(q^2+1)(q+1)q^{2n-6} \ \ \text{(Using Remark \ref{cardinality of codim two})}\\
     &= \frac{1}{q^2-1}\left[q^{2n-6}(2q^5+2q^4+q^3-2q-2)+q^{n-1}(q-1)-1\right]. 
    \end{align*}
Hence, it follows that
\begin{align*}
    B_n-|\C_n(\Fqt)\cap \U_n|&\geq q^{2n-8}(3q^5+3)+3\frac{q^{2n-5}-q^{n-1}}{q^2-1}\\
    & \ \ -\frac{1}{q^2-1}\left[q^{2n-6}(2q^5+2q^4+q^3-2q-2)+q^{n-1}(q-1)-1\right]\\
    &  =\frac{1}{q^2-1}\big[q^{2n-8}(q^7-2q^6-4q^5+5q^3+5q^2-3)-q^{n-1}(q+2)+1\big]\\
    &> 0\ \ \text{for $q\geq 7$}. 
  \end{align*}
 It's easy to check the above inequality for $n=6$, and for $n\geq 8$, the inequality follows since $(q^7-2q^6-4q^5+5q^3+5q^2-3)-(q+2)>0$. Hence $B_n > |\C_n(\Fqt)\cap \U_n|$, a contradiction to the hypothesis.

  {\bf Case 2:} \textit{$n$ is odd.} In this case $|\Pi(\Fqt)\cap \U_n|\leq |\Pi_1 \U_{n-4}(\Fqt)|=\frac{q^{2n-3}+q^{n+1}-q^n-1}{q^2-1}$. Thus, from Equation \eqref{LRaffine} we obtain 
    \begin{align*}
     |\C_n(\Fqt)\cap \U_n|&\leq  \frac{q^{2n-3}+q^{n+1}-q^n-1}{q^2-1}+2(q^2+1)(q+1)q^{2n-6}\\
    &=\frac{1}{q^2-1}\left[q^{2n-6}(2q^5+2q^4+q^3-2q-2)+q^n(q-1)-1\right]. 
    \end{align*}  
Hence, we have
\begin{align*}
  B_n-|\C_n(\Fqt)\cap \U_n|&\geq q^{2n-8}(3q^5+3)+3\frac{q^{2n-5}-q^{n-2}}{q^2-1}+q^{n-3}\\
  &\ \ -\frac{1}{q^2-1}\left[q^{2n-6}(2q^5+2q^4+q^3-2q-2)+q^n(q-1)-1\right]\\
  &=\frac{1}{q^2-1}\big[q^{2n-8}(q^7-2q^6-4q^5+5q^3+5q^2-3)\\
  & \hspace{4cm}-q^{n-3}(q^4-q^3-q^2+3q+1)+1\big]\\
  &>0.
\end{align*}
The last inequality follows since $(q^7-2q^6-4q^5+5q^3+5q^2-3)-(q^4-q^3-q^2+3q+1)>0$.
 Therefore $B_n> |\C_n(\Fqt)\cap \U_n|$, a contradiction to the hypothesis. This completes the proof.

\end{proof}

We now present the main theorem of this section, which characterizes cubic hypersurfaces in projective space under a given cardinality condition. Building on Proposition \ref{hyperplane}, this result establishes that such hypersurfaces must decompose into a union of three hyperplanes when the specified bound is exceeded.

\begin{theorem}\label{threehyp}
     Let $\C_n$ be a cubic hypersurface in $\PP^n$ defined over $\Fqt$ with $n\geq 4$ and $q\geq 7$. If $|\C_n(\Fqt)\cap \U_n|>B_n$, then $\C_n$ is the union of three  hyperplanes in $\PP^n$, each defined over $\Fqt$.
\end{theorem}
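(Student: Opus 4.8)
The plan is to bootstrap from Proposition~\ref{hyperplane}, which already guarantees that under the hypothesis $|\C_n(\Fqt)\cap \U_n|>B_n$ the cubic $\C_n$ contains a hyperplane $\Sigma_1$ defined over $\Fqt$. Writing $F$ for the defining cubic form and $L_1$ for a linear form with $V(L_1)=\Sigma_1$, we have $F=L_1 Q$ for some quadratic form $Q\in\Fqt[x_0,\dots,x_n]$, so $\C_n=\Sigma_1\cup V(Q)$ where $V(Q)$ is a quadric in $\PP^n$. The goal is to show $V(Q)$ splits into two $\Fqt$-rational hyperplanes; then $\C_n$ is a union of three hyperplanes over $\Fqt$, as claimed. (If $L_1\mid Q$ the quadric already contains $\Sigma_1$ and a routine separate argument handles the remaining linear factor, so the essential case is $\gcd(L_1,Q)=1$.)

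First I would estimate how many points of $\U_n(\Fqt)$ can lie on $\Sigma_1$: since $\Sigma_1$ is a hyperplane, Lemma~\ref{Hermitian} gives $|\Sigma_1(\Fqt)\cap\U_n|\le|\U_{n-1}(\Fqt)|$ (resp.\ $q^2|\U_{n-2}(\Fqt)|+1$) according to the parity of $n$. Consequently
\[
|V(Q)(\Fqt)\cap\U_n|\;\ge\;|\C_n(\Fqt)\cap\U_n|-|\Sigma_1(\Fqt)\cap\U_n|\;>\;B_n-|\Sigma_1(\Fqt)\cap\U_n|.
\]
Now I would invoke Lemma~\ref{ineqality1}, which says $|\Sigma(\Fqt)\cap\U_n|+A_n<B_n$ for every $\Fqt$-hyperplane $\Sigma$ and $q\ge 3$; applied to $\Sigma_1$ this yields $B_n-|\Sigma_1(\Fqt)\cap\U_n|>A_n$, hence $|V(Q)(\Fqt)\cap\U_n|>A_n$. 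By the quadric theorem \cite[Theorem 3.3]{BBFS} quoted in Subsection~\ref{intersection with quadric} (valid for $n\ge4$), $V(Q)$ is therefore a union of two hyperplanes defined over $\Fqt$. Combining, $\C_n=\Sigma_1\cup\Sigma_2\cup\Sigma_3$ with each $\Sigma_i$ an $\Fqt$-rational hyperplane, which is exactly the assertion.

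The one point needing care is the degenerate case where the quadratic cofactor $Q$ shares the linear factor $L_1$, i.e.\ $F=L_1^2 L_2$ or $F=L_1^3$; here one cannot directly split off a point count for $\Sigma_1$ disjointly from $V(Q)$, but in that situation $\C_n$ already is a union of (at most three, with multiplicity) hyperplanes over $\Fqt$, so there is nothing to prove once we observe $L_2$ (a linear form dividing $F/L_1$) is defined over $\Fqt$ because $F$ and $L_1$ are. Slightly more delicate is ensuring the cofactor $Q$ of a genuine factorization $F=L_1Q$ is itself defined over $\Fqt$: this follows since $\Fqt[x_0,\dots,x_n]$ is a UFD and $L_1\mid F$ over $\Fqt$ forces the quotient $F/L_1$ to lie in $\Fqt[x_0,\dots,x_n]$. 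I expect the main (and really only) obstacle to be none of the algebra above but rather the bookkeeping of the inequality chain: one must be sure that Lemma~\ref{ineqality1}'s bound $|\Sigma(\Fqt)\cap\U_n|+A_n<B_n$ is being applied with the same parity conventions and that the strict inequality $|\C_n(\Fqt)\cap\U_n|>B_n$ survives the subtraction to land strictly above $A_n$, which it does because $B_n-|\Sigma_1(\Fqt)\cap\U_n|>A_n$ is strict. With that in hand the proof is short.
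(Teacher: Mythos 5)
Your proposal is correct and follows essentially the same route as the paper: extract a hyperplane via Proposition~\ref{hyperplane}, write $\C_n=\Sigma_1\cup V(Q)$ with the quadratic cofactor defined over $\Fqt$, and combine Lemma~\ref{ineqality1} with \cite[Theorem 3.3]{BBFS} to force $V(Q)$ to split into two $\Fqt$-rational hyperplanes (the paper phrases this last step as a contradiction for an irreducible quadric, you apply the quadric theorem directly, which is the same argument).
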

\begin{proof}
It follows from Proposition \ref{hyperplane} that $\C_n$ contains a hyperplane $\Sigma$ defined over $\Fqt$. Let us suppose that $\C_n=\Sigma\cup \Q_n$, where $\Q_n$ is a quadric hypersurface in $\PP^n$ defined over $\Fqt$. If $\Q_n$ is an irreducible quadric, then \cite[Theorem 3.3]{BBFS} entails that $|\Q_n(\Fqt)\cap \U_n|\leq A_n$. Consequently, 
\begin{align*}
   B_n< |\C_n(\Fqt)\cap \U_n|&\leq |\Sigma (\Fqt)\cap \U_n |+|\Q_n(\Fqt)\cap \U_n|\\
    &\leq |\Sigma (\Fqt)\cap \U_n |+A_n,
    \end{align*}
    which is a contradiction to Lemma \ref{ineqality1}. This completes the proof.
\end{proof}

\section{Structure of cubic hypersurfaces }\label{sec: structure}
The main goal of this section is to determine the structure of a cubic hypersurface sharing the maximum number of $\Fqt$-rational points with $\U_n$. To this end, let us first prove the following two lemmas.
\begin{lemma}\label{onenontangent}
If $\Sigma_0$ is a hyperplane in $\PP^n$ defined over $\Fqt$ that is not tangent to $\U_n$, then for any hyperplane $\Sigma \neq \Sigma_0$ in $\PP^n$ defined over $\Fqt$, the intersection $\Sigma 
 \cap \Sigma_0 \cap \U_n$ is either $\U_{n-2}$, a non-degenerate Hermitian variety of rank $n-1$, or $\Pi_0\U_{n-3}$, a degenerate Hermitian variety of rank $n-2$.    
\end{lemma}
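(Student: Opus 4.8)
The plan is to obtain the statement from two successive applications of Theorem~\ref{hyperplane section}. First, since $\Sigma_0$ is not tangent to $\U_n$, Theorem~\ref{hyperplane section}(b) gives that $\U_{n-1}:=\Sigma_0\cap\U_n$ is a non-degenerate Hermitian variety inside $\Sigma_0\cong\PP^{n-1}$. The key observation is that this reduces us to the situation where the ambient Hermitian variety is again non-degenerate, only one dimension lower, so that Theorem~\ref{hyperplane section} can be invoked a second time.

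Next I would note that, since $\Sigma\neq\Sigma_0$ and both are hyperplanes of $\PP^n$ defined over $\Fqt$, the linear space $\Sigma\cap\Sigma_0$ has codimension one in $\Sigma_0$, i.e.\ it is a hyperplane of $\Sigma_0$ defined over $\Fqt$; in particular it is not contained in the non-degenerate variety $\U_{n-1}$. Hence $\Sigma\cap\Sigma_0\cap\U_n=(\Sigma\cap\Sigma_0)\cap\U_{n-1}$ is a hyperplane section of $\U_{n-1}$ inside $\PP^{n-1}$, and Theorem~\ref{hyperplane section} applies: if $\Sigma\cap\Sigma_0$ is not tangent to $\U_{n-1}$, part (b) gives that the section is a non-degenerate Hermitian variety $\U_{n-2}$, which has rank $n-1$; if $\Sigma\cap\Sigma_0$ is tangent to $\U_{n-1}$ at a point $P$, part (a) gives that the section is the cone $\Pi_0\U_{n-3}$ with vertex $P$ over a non-degenerate $\U_{n-3}$, a degenerate Hermitian variety of rank $n-2$. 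These two cases are exhaustive, which is the assertion.

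Conceptually, the lemma says that, compared with the three possibilities $\U_{n-2}$, $\Pi_0\U_{n-3}$, $\Pi_1\U_{n-4}$ allowed for a general codimension-two section of $\U_n$ in Corollary~\ref{codim two intersection}, the hypothesis that $\Sigma\cap\Sigma_0$ lies in the non-tangent hyperplane $\Sigma_0$ forces the rank of the induced form to drop by at most one at the second step, so the degenerate-of-rank-$(n-2)$ case $\Pi_0\U_{n-3}$ is the worst that can occur. I do not anticipate a genuine obstacle here: the only points needing care are verifying that $\Sigma\cap\Sigma_0$ is honestly a hyperplane of $\Sigma_0$ (which is where $\Sigma\neq\Sigma_0$ enters) and that the two alternatives of Theorem~\ref{hyperplane section} are exhaustive for $\U_{n-1}$, neither of which is difficult.
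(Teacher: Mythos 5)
Your proposal is correct and follows essentially the same route as the paper: both observe that $\Sigma\cap\Sigma_0$ is a hyperplane of $\Sigma_0\cong\PP^{n-1}$, that $\Sigma_0\cap\U_n$ is a non-degenerate Hermitian variety by Theorem~\ref{hyperplane section}(b), and then conclude by taking a hyperplane section of this non-degenerate variety, so only the ranks $n-1$ and $n-2$ can occur. Your explicit second application of Theorem~\ref{hyperplane section}(a)/(b) just spells out what the paper's final sentence states more tersely.
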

\begin{proof}
 As $\Sigma_0\neq \Sigma$, so $\Sigma_0\cap \Sigma$ is a hyperplane in the projective space $\Sigma_0\cong \PP^{n-1}$. Moreover, since $\Sigma_0$ is non-tangent to $\U_n$, it follows from Theorem \ref{hyperplane section} (b) that $\Sigma_0\cap \U_n$ is a non-degenerate Hermitian variety contained in $\Sigma_0$. Thus, $\Sigma\cap\Sigma_0\cap \U_n=(\Sigma\cap\Sigma_0)\cap(\Sigma_0\cap \U_n)$ is a Hermitian variety either of rank $n-1$, or of rank $n-2$.     
\end{proof}
\begin{lemma}\label{bothtangent}
  Let $n\geq 4$ and $T_P(\U_n)$, $T_Q(\U_n)$ be two tangent hyperplanes to $\U_n$ at two distinct $\Fqt$-rational points $P$ and $Q$ respectively. Then the intersection $T_P(\U_n)\cap T_Q(\U_n)\cap \U_n$ is either a non-degenerate Hermitian variety $\U_{n-2}$ of rank $n-1$, or a degenerate Hermitian variety $\Pi_1\U_{n-4}$ of rank $n-3$. 
\end{lemma}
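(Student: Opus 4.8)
The plan is to mimic the structure of Lemma~\ref{onenontangent}, but now both hyperplanes are tangent, so I must work inside one of the tangent hyperplanes and understand how the other cuts it. First I would fix the point $P$ and consider $\Sigma_0 := T_P(\U_n)$. By Theorem~\ref{hyperplane section}(a), $\Sigma_0 \cap \U_n = \Pi_0 \U_{n-2}$ is a cone with vertex $P$ over a non-degenerate Hermitian variety $\U_{n-2}$ lying in some $\PP^{n-2} \subset \Sigma_0$; equivalently, it is a degenerate Hermitian variety of rank $n-1$ living inside $\Sigma_0 \cong \PP^{n-1}$. Now $\Sigma := T_Q(\U_n)$ is a hyperplane of $\PP^n$ distinct from $\Sigma_0$ (since $P \ne Q$ forces $T_P \ne T_Q$ for a non-degenerate Hermitian variety — the polarity is nondegenerate), so $\Sigma \cap \Sigma_0$ is a hyperplane of $\Sigma_0 \cong \PP^{n-1}$. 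Thus $T_P(\U_n) \cap T_Q(\U_n) \cap \U_n = (\Sigma \cap \Sigma_0) \cap (\Sigma_0 \cap \U_n)$ is the intersection of a hyperplane of $\PP^{n-1}$ with a Hermitian variety of rank $n-1$ in that $\PP^{n-1}$.

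The next step is to classify that intersection. A Hermitian variety of rank $n-1$ in $\PP^{n-1}$ is, up to projective change of coordinates over $\Fqt$, $V(x_0^{q+1} + \dots + x_{n-2}^{q+1})$, i.e. a cone $\Pi_0 \U_{n-2}$ with a single-point vertex. Intersecting with a hyperplane $\Lambda$ of $\PP^{n-1}$: either $\Lambda$ passes through the vertex $P$ or it does not. If $\Lambda \not\ni P$, then $\Lambda$ meets the cone in a copy of $\U_{n-2}$ pulled back isomorphically, i.e. a non-degenerate Hermitian variety $\U_{n-2}$ of rank $n-1$ inside $\Lambda \cong \PP^{n-2}$. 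If $\Lambda \ni P$, then the cone structure is preserved and $\Lambda \cap (\Pi_0 \U_{n-2})$ is again a cone with vertex $P$ over $\Lambda' \cap \U_{n-2}$, where $\Lambda'$ is a hyperplane of the base $\PP^{n-2}$; by Theorem~\ref{hyperplane section} applied inside that $\PP^{n-2}$, the section $\Lambda' \cap \U_{n-2}$ is either $\U_{n-3}$ (non-degenerate, if $\Lambda'$ is non-tangent to $\U_{n-2}$) or $\Pi_0 \U_{n-4}$ (a cone, if $\Lambda'$ is tangent). Coning these over $P$ yields respectively $\Pi_0 \U_{n-3}$, a degenerate Hermitian variety of rank $n-2$, or $\Pi_1 \U_{n-4}$, a degenerate Hermitian variety of rank $n-3$. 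So a priori there are three possibilities, and the task is to rule out the middle one, $\Pi_0 \U_{n-3}$.

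To exclude $\Pi_0 \U_{n-3}$ I would use the special geometry of tangent hyperplanes rather than treating $\Sigma \cap \Sigma_0$ as an arbitrary hyperplane of $\Sigma_0$. The key observation is symmetry in $P$ and $Q$: by the same reasoning with the roles of $P$ and $Q$ swapped, $T_P \cap T_Q \cap \U_n$ is simultaneously a cone with vertex $P$ (it lies in $T_P(\U_n)$, hence is a cone over its base with vertex $P$, as every generator through $P$ that lies in $\U_n$ lies in $T_P$) and a cone with vertex $Q$. A Hermitian variety of rank $n-1$ — the shape $\Pi_0 \U_{n-3}$ — has a unique vertex (its singular locus is exactly the vertex $\Pi_0$, a single point), so it cannot be a cone with two distinct vertices $P \ne Q$; the same obstruction does not apply to $\U_{n-2}$ (vertex set empty, but sits inside $T_P \cap T_Q$ which need not force it to be a cone) nor to $\Pi_1 \U_{n-4}$ (vertex set a line, which can contain both $P$ and $Q$). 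I expect the main obstacle to be making the ``simultaneously a cone with vertex $P$ and with vertex $Q$'' argument fully rigorous: one must verify that $P, Q \in T_P(\U_n) \cap T_Q(\U_n) \cap \U_n$ in the first place (which requires $Q \in T_P(\U_n)$, i.e. that the line $PQ$ lies on $\U_n$ — this is \emph{not} automatic and forces a case split on whether $Q \in T_P(\U_n)$), and then to argue that when $Q \notin T_P(\U_n)$ the hyperplane $\Sigma \cap \Sigma_0$ of $\Sigma_0$ does \emph{not} pass through the vertex $P$, forcing the non-degenerate case $\U_{n-2}$. Concretely: if $Q \notin T_P(\U_n)$ then $\Sigma \cap \Sigma_0 \not\ni P$ and we land in the first case ($\U_{n-2}$); if $Q \in T_P(\U_n)$ then by the polarity $P \in T_Q(\U_n)$ as well, the line $PQ \subset \U_n$, and $P, Q$ are two distinct points of the section which is a cone with vertex $P$ and (by symmetry) with vertex $Q$, so the vertex space must contain the line $PQ$, ruling out $\Pi_0 \U_{n-3}$ and leaving only $\Pi_1 \U_{n-4}$. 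Assembling these cases gives exactly the dichotomy in the statement.
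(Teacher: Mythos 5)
Your proposal is correct, and it reaches the lemma by a genuinely different (and in one respect sharper) route than the paper. The paper's own proof simply quotes Corollary~\ref{codim two intersection} for the trichotomy $\U_{n-2}$, $\Pi_0\U_{n-3}$, $\Pi_1\U_{n-4}$ and kills the middle case in one stroke: if the section were a cone with point vertex $R$, two generator lines through $R$, being lines of $\U_n$ inside $T_P(\U_n)$ and $T_Q(\U_n)$, would have to pass through both $P$ and $Q$, forcing $R=P=Q$. You instead re-derive the trichotomy by slicing the tangent cone $T_P(\U_n)\cap\U_n$ with the hyperplane $T_P(\U_n)\cap T_Q(\U_n)$ of $T_P(\U_n)$, and then split according to whether $Q\in T_P(\U_n)$, i.e.\ whether $P$ and $Q$ are conjugate: if not, reciprocity of the Hermitian polarity gives $P\notin T_Q(\U_n)$, so the slicing hyperplane misses the vertex and the section is $\U_{n-2}$ outright; if so, then $P\in T_Q(\U_n)$, the line $PQ$ lies on $\U_n$, the section is a cone with respect to both $P$ and $Q$, and a Hermitian variety whose vertex is a single point cannot have two distinct cone points, leaving only $\Pi_1\U_{n-4}$. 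This buys more than the lemma states --- your argument shows the type is determined exactly by whether $Q\in T_P(\U_n)$ --- and it also sidesteps the delicate step in the paper's argument that the chosen lines must pass through $P$ (for $n\geq 5$ the cone $T_P(\U_n)\cap\U_n$ does contain lines avoiding its vertex, so that step needs the lines to be generators and some justification). Two small repairs: $\Pi_0\U_{n-3}$ has rank $n-2$, not $n-1$, inside the codimension-two space; and your assertion that the cone points of $\Pi_0\U_{n-3}$ are exactly its one-point vertex (via the singular locus) deserves a one-line proof, e.g.\ a point of the section outside the radical is non-orthogonal to some isotropic point, and the hyperbolic line joining them is not contained in the variety. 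Both are easily supplied, so the proof stands.
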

\begin{proof}
    The intersection $T_P(\U_n)\cap T_Q(\U_n)\cap \U_n$ can never be a cone $\Pi_0\U_{n-3}$. Indeed, if it were a cone with vertex at some point, say $R$,  take two lines $\ell$ and $m$ within the cone. Since $\ell, m\subseteq T_P(\U_n)\cap \U_n$, it follows that $P\in \ell\cap m$. Likewise, $Q\in \ell\cap m$. This implies $R=P=Q$, leading to a contradiction. Now, the lemma follows from Corollary \ref{codim two intersection}. 
\end{proof}

\begin{theorem}\label{evendim}
   Let $n\geq 4$ be an even integer and $\C_n$ be a cubic hypersurface in $\PP^n$ defined over $\Fqt$ with $q\geq 7$. Then $|\C_n(\Fqt)\cap \U_n|\leq 3|\U_{n-1}(\Fqt)|-2|\U_{n-2}(\Fqt)|$. Moreover, this bound is attained by a cubic hypersurface $\C_n$ in $\PP^n$, if and only if $\C_n$ is a union of three distinct non-tangent hyperplanes $\Sigma_1, \Sigma_2$ and $\Sigma_3$ defined over $\Fqt$, such that each of the hyperplanes contains a common $(n-2)$-space $\Pi_{n-2}$ defined over $\Fqt$ and $\Pi_{n-2}$ intersects $\U_n$ at a non-degenerate Hermitian variety.
\end{theorem}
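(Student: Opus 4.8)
The plan is to combine the classification obtained in Section \ref{sec: cubic section} with a careful accounting of how three hyperplanes can meet $\U_n$. First I would establish the upper bound. Let $\C_n$ be a cubic hypersurface with $|\C_n(\Fqt)\cap\U_n|$ as large as possible; since the configuration described in Conjecture~\ref{ELXC} gives $3|\U_{n-1}(\Fqt)|-2|\U_{n-2}(\Fqt)|$ points (three non-tangent hyperplanes through $\Pi_{n-2}$, with $\Pi_{n-2}\cap\U_n=\U_{n-2}$, by inclusion-exclusion: $3|\U_{n-1}(\Fqt)|-2|\U_{n-2}(\Fqt)|$), it suffices to show this number exceeds $B_n$, so that Theorem~\ref{threehyp} applies and $\C_n=\Sigma_1\cup\Sigma_2\cup\Sigma_3$ with each $\Sigma_i$ defined over $\Fqt$. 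This $B_n < 3|\U_{n-1}(\Fqt)|-2|\U_{n-2}(\Fqt)|$ comparison is a routine estimate using Lemma~\ref{form} and Theorem~\ref{nondeg point}, valid for $q\ge 7$ (indeed likely for all $q\ge 2$); I would do it as in the inequalities of Lemma~\ref{ineqality1}.

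Next, assuming $\C_n=\Sigma_1\cup\Sigma_2\cup\Sigma_3$, I would bound $|\C_n(\Fqt)\cap\U_n|$ by inclusion-exclusion:
\begin{align*}
|\C_n(\Fqt)\cap\U_n| &= \sum_{i}|\Sigma_i(\Fqt)\cap\U_n| - \sum_{i<j}|\Sigma_i\cap\Sigma_j\cap\U_n(\Fqt)| \\
& \quad + |\Sigma_1\cap\Sigma_2\cap\Sigma_3\cap\U_n(\Fqt)|.
\end{align*}
Each term $|\Sigma_i(\Fqt)\cap\U_n|$ is maximized, by Lemma~\ref{Hermitian} (with $n$ even), when $\Sigma_i$ is non-tangent, giving $|\U_{n-1}(\Fqt)|$; a tangent hyperplane gives strictly fewer points. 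Each pairwise intersection $\Sigma_i\cap\Sigma_j$ is an $(n-2)$-space, so by Corollary~\ref{codim two intersection} and Remark~\ref{cardinality of codim two} (with $n$ even) we have $|\Sigma_i\cap\Sigma_j\cap\U_n(\Fqt)| \ge |\Pi_1\U_{n-4}(\Fqt)|$ when it is of type $\Pi_1\U_{n-4}$, with the \emph{smallest} possibility being $\Pi_1\U_{n-4}$ — wait, I want the subtracted terms to be small, so I need the reverse: the relevant feature is that $|\U_{n-2}(\Fqt)|$ is the \emph{smallest} of the three section types except $\Pi_1\U_{n-4}$. Here Lemmas~\ref{onenontangent} and~\ref{bothtangent} are decisive: if at least one $\Sigma_i$ is non-tangent, the pairwise sections cannot be $\Pi_1\U_{n-4}$, so each pairwise term is $\ge|\U_{n-2}(\Fqt)|$; if all three are tangent, the pairwise terms could be as small as $|\Pi_1\U_{n-4}(\Fqt)|$, but then each single term drops to at most $q^2|\U_{n-2}(\Fqt)|+1 < |\U_{n-1}(\Fqt)|$, and I would check that the total still cannot beat the target. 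Combining, and noting the triple intersection is nonnegative and is $0$ or small, one gets $|\C_n(\Fqt)\cap\U_n|\le 3|\U_{n-1}(\Fqt)|-2|\U_{n-2}(\Fqt)|$ after verifying the all-tangent case separately (two subcases depending on whether the three tangent hyperplanes share a common $(n-2)$-space).

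Finally, for the equality characterization I would trace back through these inequalities: equality forces every $|\Sigma_i(\Fqt)\cap\U_n|=|\U_{n-1}(\Fqt)|$, hence each $\Sigma_i$ non-tangent; it forces every pairwise section to be exactly $\U_{n-2}$ (rank $n-1$), not $\Pi_0\U_{n-3}$; and it forces the triple-intersection correction to land exactly. The condition that all three pairwise intersections $\Sigma_i\cap\Sigma_j$ equal the \emph{same} $(n-2)$-space $\Pi_{n-2}$ then follows: if two pairwise intersections were distinct $(n-2)$-spaces they would span more, forcing $\Sigma_1\cap\Sigma_2\cap\Sigma_3$ to have dimension $n-3$ and altering the count, which a short dimension-count rules out under the equality constraint. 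That $\Pi_{n-2}\cap\U_n$ is a non-degenerate Hermitian variety is exactly the statement that the common pairwise section has rank $n-1$, already extracted. The converse — that this configuration attains the bound — is the inclusion-exclusion computation run in reverse. \textbf{Main obstacle:} the careful case analysis when one or more of the $\Sigma_i$ is tangent, in particular showing the all-tangent configuration (where Lemma~\ref{bothtangent} permits the favorable small section $\Pi_1\U_{n-4}$) still falls strictly short of the bound; this requires splitting on whether the tangent hyperplanes are concurrent in an $(n-2)$-space and a somewhat delicate numerical comparison using the explicit cardinalities in Remark~\ref{cardinality of codim two}.
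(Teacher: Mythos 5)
Your overall route is the same as the paper's: reduce via Theorem \ref{threehyp} to $\C_n=\Sigma_1\cup\Sigma_2\cup\Sigma_3$ (the comparison $B_n<3|\U_{n-1}(\Fqt)|-2|\U_{n-2}(\Fqt)|$ you flag is indeed needed and routine), then run inclusion--exclusion with a case analysis on tangency and on whether the three hyperplanes meet in codimension $2$ or $3$. However, one step as you state it is false and it carries the mixed case. You claim: ``if at least one $\Sigma_i$ is non-tangent, the pairwise sections cannot be $\Pi_1\U_{n-4}$, so each pairwise term is $\ge|\U_{n-2}(\Fqt)|$.'' Lemma \ref{onenontangent} only controls the pairwise sections \emph{involving} the non-tangent hyperplane. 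If $\Sigma_1$ is non-tangent but $\Sigma_2,\Sigma_3$ are both tangent and the three meet in codimension $3$, then by Lemma \ref{bothtangent} the section $\Sigma_2\cap\Sigma_3\cap\U_n$ may well be $\Pi_1\U_{n-4}$, and for $n$ even $|\Pi_1\U_{n-4}(\Fqt)|<|\U_{n-2}(\Fqt)|$ (Remark \ref{cardinality of codim two}), so the subtracted term is smaller than you assume and your bound for this subcase does not follow as written. (In the codimension-$2$ configuration the claim is fine, since all three pairwise intersections coincide with $\Pi_{n-2}$ and Lemma \ref{onenontangent} applies to it.)

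The gap is repairable, and the paper's proof shows how: in the codimension-$3$ case one discards the term $|\Sigma_2\cap\Sigma_3\cap\U_n(\Fqt)|$ together with the triple-intersection term (the latter is at most the former), so that only pairwise terms containing the non-tangent $\Sigma_1$ survive, each $\ge|\U_{n-2}(\Fqt)|$ by Lemma \ref{onenontangent}; strictness then comes from the deficit of the tangent hyperplane, $|\Sigma_2(\Fqt)\cap\U_n|=q^2|\U_{n-2}(\Fqt)|+1<|\U_{n-1}(\Fqt)|$ for $n$ even. Equivalently, one can keep your accounting and check numerically that each tangent hyperplane loses $q^{n-1}$ in its single term while a pairwise section degrading from $\U_{n-2}$ to $\Pi_1\U_{n-4}$ only returns $q^{n-2}(q-1)$, but either way this subcase needs an explicit argument rather than the quoted claim. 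Your all-tangent analysis, the equality tracing (all $\Sigma_i$ non-tangent, common section $\U_{n-2}$, codimension-$3$ configurations strictly suboptimal), and the converse computation otherwise match the paper.
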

\begin{proof}
 In view of Theorem \ref{threehyp}, it is enough to consider the cubic hypersurfaces, which are the union of three distinct hyperplanes defined over $\Fqt$. Let $\C_n=\Sigma_1\cup\Sigma_2\cup\Sigma_3$ be such a cubic hypersurface. We now distinguish two cases:
 
 {\bf Case 1:} \textit{$\Sigma_1,\Sigma_2$ and $\Sigma_3$ intersect in a subspace $\Pi_{n-2}$ having codimension two.} In this case, 
 \begin{equation}\label{sumeven}
 |\C_n(\Fqt)\cap \U_n|=\left(\sum_{i=1}^3 |\Sigma_i(\Fqt)\cap \U_n|\right)-2|\Pi_{n-2}(\Fqt)\cap \U_n|.
 \end{equation}
 If $\Sigma_1,\Sigma_2$ and $\Sigma_3$ are all tangents to $\U_n$, then from Remark \ref{cardinality of codim two} and Equation \eqref{sumeven} it follows 
 \begin{align}
     |\C_n(\Fqt)\cap \U_n|&\leq 3\left(q^2|\U_{n-2}(\Fqt)|+1\right)-2|\Pi_1 \U_{n-4}(\Fqt)|\nonumber \\
     &=\frac{q^{2n-3}(3q^2-2)-q^n(q-1)-1}{q^2-1}\label{sumeven1}.
 \end{align}
 Now suppose that at least one hyperplane is non-tangent to $\U_n$. Then Lemma \ref{onenontangent} and Remark \ref{cardinality of codim two} entail $|\Pi_{n-2}(\Fqt)\cap \U_n|\geq |\U_{n-2}(\Fqt)|$. Hence, from Equation \eqref{sumeven} it follows
 \begin{align}
  |\C_n(\Fqt)\cap \U_n|&\leq 3|\U_{n-1}(\Fqt)|-2|\U_{n-2}(\Fqt)|\nonumber\\
  &= \frac{q^{2n-3}(3q^2-2)+q^{n-2}(3q^2-q-2)-1}{q^2-1}.\label{sumeven2}
 \end{align}
  Comparing Equations \eqref{sumeven1} and \eqref{sumeven2}, we get that $$|\C_n(\Fqt)\cap \U_n|\leq 3|\U_{n-1}(\Fqt)|-2|\U_{n-2}(\Fqt)|, $$ and the bound is attained in this case only if all the hyperplanes are non-tangent to $\U_n$ and their intersection meets the Hermitian variety $\U_n$ at a non-degenerate Hermitian variety. 

 {\bf Case 2:} \textit{$\Sigma_1,\Sigma_2$ and $\Sigma_3$ intersect in a subspace $\Pi_{n-3}$ having codimension three.} In this case, 
 \begin{equation}\label{case2even}
   |\C_n(\Fqt)\cap \U_n|= \sum_{i=1}^3|\Sigma_i(\Fqt)\cap \U_n|-\sum_{1\leq i<j\leq 3}|\Sigma_i\cap \Sigma_j\cap \U_n(\Fqt)|+|\Pi_{n-3}(\Fqt)\cap \U_n|.
 \end{equation}
 If $\Sigma_1,\Sigma_2$ and $\Sigma_3$ are all tangents to $\U_n$, then from Equation \eqref{case2even} and Remark \ref{cardinality of codim two} we obtain
 \begin{align*}
  |\C_n(\Fqt)\cap \U_n|&
  \leq \sum_{i=1}^3 |\Sigma_i\cap \U_n(\Fqt)|-|\Sigma_1\cap\Sigma_2\cap \U_n(\Fqt)|-|\Sigma_1\cap\Sigma_3\cap \U_n(\Fqt)|\\
  &\leq 3\left(q^2|\U_{n-2}(\Fqt)|+1\right)-2|\Pi_1 \U_{n-4}(\Fqt)|\\
  &< 3|\U_{n-1}(\Fqt)|-2|\U_{n-2}(\Fqt)|.
 \end{align*}
 The last inequality has already been seen in Case 1. Now suppose that at least one hyperplane, say $\Sigma_1$, is non-tangent to $\U_n$. If one of $\Sigma_2, \Sigma_3$ is tangent, say $\Sigma_2$ is tangent then it follows from Lemma \ref{Hermitian} that $|\Sigma_2\cap \U_n(\Fqt)|<|\U_{n-1}(\Fqt)|$ and as a consequence, from Equation \eqref{case2even} we get
 \begin{align*}
    |\C_n(\Fqt)\cap \U_n|&
  \leq \sum_{i=1}^3 |\Sigma_i\cap \U_n(\Fqt)|-|\Sigma_1\cap\Sigma_2\cap \U_n(\Fqt)|-|\Sigma_1\cap\Sigma_3\cap \U_n(\Fqt)|\\
  &< 3|\U_{n-1}(\Fqt)|-2|\U_{n-2}(\Fqt)|\ \ (\text{by Lemma \ref{onenontangent} and Remark \ref{cardinality of codim two} }).
 \end{align*}
 Therefore, we may assume that all three hyperplanes are non-tangent to $\U_n$. It follows from Lemma \ref{onenontangent} that $\Sigma_i\cap \Sigma_j\cap \U_n$ is either $\U_{n-2}$ or $\Pi_0\U_{n-3}$ for $1\leq i<j\leq 3$. If $\Sigma_i\cap \Sigma_j\cap \U_n=\Pi_0\U_{n-3}$ for any pair $(i,j)\in \{(1,2),(1,3),(2,3)\}$, then 
 \begin{align*}
    |\C_n(\Fqt)\cap \U_n|&
  \leq \sum_{i=1}^3 |\Sigma_i\cap \U_n(\Fqt)|-|\Sigma_1\cap\Sigma_2\cap \U_n(\Fqt)|-|\Sigma_1\cap\Sigma_3\cap \U_n(\Fqt)|\\
  &= 3|\U_{n-1}(\Fqt)|-2|\Pi_0\U_{n-3}(\Fqt)|\\ 
  &< 3|\U_{n-1}(\Fqt)|-2|\U_{n-2}(\Fqt)|\ \ (\text{using Remark \ref{cardinality of codim two}}).
 \end{align*}
 Now suppose there exists a pair, say $(2,3)$ such that $\Sigma_2\cap \Sigma_3\cap \U_n=\U_{n-2}$. Since, $\Sigma_1\cap\Sigma_2\cap\Sigma_3$ is a hyperplane in $\Sigma_2\cap\Sigma_3$, it follows from Theorem \ref{hyperplane section} (b) that $\Sigma_1\cap\Sigma_2\cap\Sigma_3\cap \U_n=(\Sigma_1\cap\Sigma_2\cap\Sigma_3)\cap (\Sigma_2\cap \Sigma_3\cap \U_n)$ is either $\U_{n-3}$ or, $\Pi_0\U_{n-4}$. In any case, $$|\Sigma_1\cap\Sigma_2\cap\Sigma_3\cap \U_n(\Fqt)|< |\Sigma_2\cap\Sigma_3\cap \U_n(\Fqt)|=|\U_{n-2}(\Fqt)|.$$ Hence from Equation \eqref{case2even} it follows 
 \begin{align*}
    |\C_n(\Fqt)\cap \U_n|&
  < \sum_{i=1}^3 |\Sigma_i\cap \U_n(\Fqt)|-|\Sigma_1\cap\Sigma_2\cap \U_n(\Fqt)|-|\Sigma_1\cap\Sigma_3\cap \U_n(\Fqt)|\\
  &\leq 3|\U_{n-1}(\Fqt)|-2|\U_{n-2}(\Fqt)|.
 \end{align*}
  Thus, in Case 2, we always have
 $$ |\C_n(\Fqt)\cap \U_n|<3|\U_{n-1}(\Fqt)|-2|\U_{n-2}(\Fqt)|.$$ 
 Combining both cases, the assertion follows.
\end{proof}

\begin{theorem}\label{odddim}
  Let $n\geq 5$ be an odd integer and $\C_n$ be a cubic hypersurface in $\PP^n$ defined over $\Fqt$ with $q\geq 7$. Then $|\C_n(\Fqt)\cap \U_n|\leq (3q^2-2)|\U_{n-2}(\Fqt)|+3$. Moreover, this bound is attained by a cubic hypersurface $\C_n$ in $\PP^n$, if and only if $\C_n$ is a union of three distinct tangent hyperplanes $\Sigma_1, \Sigma_2$ and $\Sigma_3$ defined over $\Fqt$, such that each of the hyperplanes contains a common $(n-2)$-space $\Pi_{n-2}$ defined over $\Fqt$ and $\Pi_{n-2}$ intersects $\U_n$ at a non-degenerate Hermitian variety.   
\end{theorem}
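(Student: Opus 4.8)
The plan is to follow the same architecture as the proof of Theorem~\ref{evendim}, keeping in mind that in odd dimension tangent and non-tangent hyperplane sections switch roles: a tangent section of $\U_n$ contains $q^2|\U_{n-2}(\Fqt)|+1$ points, while a non-tangent one contains only $|\U_{n-1}(\Fqt)|=q^2|\U_{n-2}(\Fqt)|+1-q^{n-1}$ points, so the extremal cubic should now be a union of \emph{tangent} hyperplanes. A direct comparison of the closed form for $B_n$ in Lemma~\ref{form} with $|\U_{n-2}(\Fqt)|$ from Remark~\ref{cardinality of codim two} shows $B_n<(3q^2-2)|\U_{n-2}(\Fqt)|+3$ for $q\geq 7$, so Theorem~\ref{threehyp} reduces us to $\C_n=\Sigma_1\cup\Sigma_2\cup\Sigma_3$, a union of three distinct $\Fqt$-hyperplanes: if $\C_n$ is not such a union then either $|\C_n(\Fqt)\cap\U_n|\leq B_n$, or $\C_n$ is set-theoretically a union of at most two hyperplanes with $|\C_n(\Fqt)\cap\U_n|\leq 2(q^2|\U_{n-2}(\Fqt)|+1)$ by Lemma~\ref{Hermitian}, and both are strictly below the target. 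The other quantities I would use throughout are the gap $|\U_{n-2}(\Fqt)|-|\Pi_0\U_{n-3}(\Fqt)|=q^{n-2}$ and the inequalities $|\Pi_0\U_{n-3}(\Fqt)|<|\U_{n-2}(\Fqt)|<|\Pi_1\U_{n-4}(\Fqt)|$ from Remark~\ref{cardinality of codim two}.

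\emph{Case 1: $\Sigma_1,\Sigma_2,\Sigma_3$ meet in a common codimension-two space $\Pi_{n-2}$.} Then $\Sigma_i\cap\Sigma_j=\Pi_{n-2}$ for every pair, so inclusion--exclusion yields the exact identity $|\C_n(\Fqt)\cap\U_n|=\sum_i|\Sigma_i(\Fqt)\cap\U_n|-2|\Pi_{n-2}(\Fqt)\cap\U_n|$. If all three $\Sigma_i$ are tangent, Lemma~\ref{bothtangent} forces $\Pi_{n-2}\cap\U_n\in\{\U_{n-2},\Pi_1\U_{n-4}\}$, so $|\Pi_{n-2}(\Fqt)\cap\U_n|\geq|\U_{n-2}(\Fqt)|$ and $|\C_n(\Fqt)\cap\U_n|=3(q^2|\U_{n-2}(\Fqt)|+1)-2|\Pi_{n-2}(\Fqt)\cap\U_n|\leq(3q^2-2)|\U_{n-2}(\Fqt)|+3$, with equality exactly when $\Pi_{n-2}\cap\U_n$ is the non-degenerate $\U_{n-2}$. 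If at least one $\Sigma_i$ is non-tangent, Lemma~\ref{onenontangent} gives $\Pi_{n-2}\cap\U_n\in\{\U_{n-2},\Pi_0\U_{n-3}\}$, and making a hyperplane non-tangent costs $q^{n-1}$ in the sum while saving at most $2(|\U_{n-2}(\Fqt)|-|\Pi_0\U_{n-3}(\Fqt)|)=2q^{n-2}$ in the subtracted term; since $q^{n-1}-2q^{n-2}=q^{n-2}(q-2)>0$, this subcase stays strictly below the target.

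\emph{Case 2: $\Sigma_1,\Sigma_2,\Sigma_3$ meet in a common codimension-three space $\Pi_{n-3}$.} Now inclusion--exclusion reads $|\C_n(\Fqt)\cap\U_n|=\sum_i|\Sigma_i(\Fqt)\cap\U_n|-\sum_{i<j}|\Sigma_i\cap\Sigma_j\cap\U_n(\Fqt)|+|\Pi_{n-3}(\Fqt)\cap\U_n|$. Since $\Pi_{n-3}\cap\U_n$ lies inside every pairwise section, I may cancel the last summand against a chosen pairwise term. If all three $\Sigma_i$ are tangent, crudely cancelling and bounding the other two pairwise sections below by $|\U_{n-2}(\Fqt)|$ via Lemma~\ref{bothtangent} only reaches the target without strictness; to close the gap I would keep the identity and note that the cancelled pair's section $\Sigma_i\cap\Sigma_j\cap\U_n$ --- being $\U_{n-2}$ or the cone $\Pi_1\U_{n-4}$, and in either case having $\Fqt$-points too numerous to lie in a hyperplane of the codimension-two space $\Sigma_i\cap\Sigma_j$ --- is not contained in $\Pi_{n-3}$, hence $|\Pi_{n-3}(\Fqt)\cap\U_n|<|\Sigma_i\cap\Sigma_j\cap\U_n(\Fqt)|$ strictly; this pushes the count strictly below the target. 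If some $\Sigma_i$, say $\Sigma_1$, is non-tangent, I would cancel a pair containing $\Sigma_1$, bound the two surviving pairwise sections below by $|\Pi_0\U_{n-3}(\Fqt)|$ via Lemma~\ref{onenontangent}, and use $\sum_i|\Sigma_i(\Fqt)\cap\U_n|\leq 3(q^2|\U_{n-2}(\Fqt)|+1)-q^{n-1}$, so that the estimate again beats the target by $q^{n-1}-2q^{n-2}>0$.

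It remains to see the bound is attained. Choosing a codimension-two $\Pi_{n-2}$ with $\Pi_{n-2}\cap\U_n$ non-degenerate, the pencil of $\Fqt$-hyperplanes through $\Pi_{n-2}$ has $q+1\geq 3$ members that meet $\U_n$ in a rank-$(n-1)$ Hermitian variety --- that is, that are tangent --- and the union of any three of them realizes $3(q^2|\U_{n-2}(\Fqt)|+1)-2|\U_{n-2}(\Fqt)|=(3q^2-2)|\U_{n-2}(\Fqt)|+3$; together with the case analysis this gives the bound and the equality characterization, which is exactly the configuration predicted by Conjecture~\ref{ELXC} in odd dimension. The step I expect to be the genuine obstacle is Case~2 with all three hyperplanes tangent: there the raw cardinality bookkeeping is exactly tight against the claimed maximum, and one really needs the geometric fact that a non-degenerate --- or cone-type $\Pi_1\U_{n-4}$ --- Hermitian section of a codimension-two subspace spans that subspace and hence cannot be absorbed into the hyperplane $\Pi_{n-3}$ of it; every other subcase reduces to the same elementary arithmetic with the cardinalities of Remark~\ref{cardinality of codim two} that already appears in Theorem~\ref{evendim}.
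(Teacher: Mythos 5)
Your proposal is correct and follows essentially the same route as the paper's proof: reduce via Theorem~\ref{threehyp} (and the comparison of $B_n$ with the target) to a union of three distinct hyperplanes, then run inclusion--exclusion in the codimension-two and codimension-three cases using Lemma~\ref{onenontangent}, Lemma~\ref{bothtangent} and Remark~\ref{cardinality of codim two}, with equality forced exactly in the all-tangent configuration whose common $\Pi_{n-2}$ meets $\U_n$ in a non-degenerate $\U_{n-2}$. Your only departures are sound refinements rather than a different method: you get strictness in the codimension-three all-tangent subcase by noting that the cancelled pairwise section has too many $\Fqt$-points to lie in the hyperplane $\Pi_{n-3}$ of $\Sigma_i\cap\Sigma_j$ (the paper instead splits on whether that section is $\U_{n-2}$ or $\Pi_1\U_{n-4}$ and uses the resulting structural inequality), you explicitly handle the degenerate possibility of repeated hyperplanes, and you supply the explicit pencil construction with $q+1$ tangent members to show the bound is actually attained, a point the paper leaves implicit.
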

\begin{proof}
    Let $\C_n=\Sigma_1\cup\Sigma_2\cup\Sigma_3$ be a cubic hypersurface, which is the union of three distinct hyperplanes, each defined over $\Fqt$. Let us separate two cases: 
    
     {\bf Case 1:} \textit{$\Sigma_1,\Sigma_2$ and $\Sigma_3$ intersect in a subspace $\Pi_{n-2}$ having codimension two.} In this case, 
 \begin{equation}\label{sumodd}
 |\C_n(\Fqt)\cap \U_n|=\left(\sum_{i=1}^3 |\Sigma_i\cap \U_n(\Fqt)|\right)-2|\Pi_{n-2}\cap \U_n(\Fqt)|.
 \end{equation}
Suppose $\Sigma_1,\Sigma_2$ and $\Sigma_3$ are all tangents to $\U_n$. It follows from Lemma \ref{bothtangent} and Remark \ref{cardinality of codim two} that $|\Pi_{n-2}\cap \U_n(\Fqt)|\geq |\U_{n-2}(\Fqt)|$. Hence, from Equation \eqref{sumodd} we obtain
\begin{align*}
    |\C_n(\Fqt)\cap\U_n|&\leq 3\left(q^2|\U_{n-2}(\Fqt)|+1\right)-2|\U_{n-2}(\Fqt)|\\
                        &=(3q^2-2)|\U_{n-2}(\Fqt)|+3.
\end{align*}
Let at least one of the hyperplanes, say $\Sigma_1$, be non-tangent to $\U_n$. Therefore Equation \eqref{sumodd}, Remark \ref{cardinality of codim two} and Lemma \ref{Hermitian} entail that 
\begin{align}
 |\C_n(\Fqt)\cap\U_n|&\leq |\U_{n-1}(\Fqt)|+2\left(q^2|\U_{n-2}(\Fqt)|+1\right)-2|\Pi_0\U_{n-3}(\Fqt)|\nonumber\\
                     &<(3q^2-2)|\U_{n-2}(\Fqt)|+3.\label{sumodd ineq}    
\end{align}
The inequality in \eqref{sumodd ineq} follows from a computation that 
\begin{align*}
&\left((3q^2-2)|\U_{n-2}(\Fqt)|+3\right)-\left(|\U_{n-1}(\Fqt)|+2\left(q^2|\U_{n-2}(\Fqt)|+1\right)-2|\Pi_0\U_{n-3}(\Fqt)|\right)\\
&=\frac{q^{n-2}(q^3-2q^2-q+2)}{q^2-1}>0,\ \  \text{since $q\geq 7$}.
\end{align*}

 Thus in this case $ |\C_n(\Fqt)\cap \U_n|\leq (3q^2-2)|\U_{n-2}(\Fqt)|+3$ and the bound is attained if and only if all the hyperplanes are tangents to $\U_n$ and their common space $\Pi_{n-2}$ meets the Hermitian variety $\U_n$ at a non-degenerate Hermitian variety $\U_{n-2}$.

 {\bf Case 2:} \textit{$\Sigma_1,\Sigma_2$ and $\Sigma_3$ intersect in a subspace $\Pi_{n-3}$ having codimension three.} In this case, 
 \begin{equation}\label{odd codim three}
   |\C_n(\Fqt)\cap \U_n|= \sum_{i=1}^3|\Sigma_i\cap \U_n(\Fqt)|-\sum_{1\leq i<j\leq 3}|\Sigma_i\cap \Sigma_j\cap \U_n(\Fqt)|+|\Pi_{n-3}\cap \U_n(\Fqt)|.
 \end{equation}
 Since from Equation \eqref{odd codim three} it follows $|\C_n(\Fqt)\cap \U_n|\leq \sum_{i=1}^3|\Sigma_i\cap \U_n(\Fqt)|-|\Sigma_1\cap \Sigma_2\cap \U_n(\Fqt)|-|\Sigma_1\cap \Sigma_3\cap \U_n(\Fqt)|$,
 so due to Case 1, it is enough to consider the case when all the hyperplanes are tangent to $\U_n$. Hence Lemma \ref{bothtangent} implies that $\Sigma_i\cap \Sigma_j\cap \U_n$ is either $\U_{n-2}$ or $\Pi_1 \U_{n-4}$, for all $1\leq i<j\leq 3$. If $\Sigma_i\cap \Sigma_j\cap \U_n=\Pi_1 \U_{n-4}$ for all pairs $(i,j)$, then 
 \begin{align*}
    |\C_n(\Fqt)\cap \U_n|&
  \leq \sum_{i=1}^3 |\Sigma_i\cap \U_n(\Fqt)|-|\Sigma_1\cap\Sigma_2\cap \U_n(\Fqt)|-|\Sigma_1\cap\Sigma_3\cap \U_n(\Fqt)|\\
  &= 3(q^2|\U_{n-2}(\Fqt)|+1)-2|\Pi_1 \U_{n-4}(\Fqt)|\\ 
  &<(3q^2-2)|\U_{n-2}(\Fqt)|+3 .
 \end{align*}
 The last inequality follows since $|\U_{n-2}(\Fqt)|<|\Pi_1\U_{n-4}(\Fqt)|$ (Remark \ref{cardinality of codim two}).
 
  Now suppose there exists a pair, say $(2,3)$ such that $\Sigma_2\cap \Sigma_3\cap \U_n=\U_{n-2}$. It follows that $\Sigma_1\cap\Sigma_2\cap\Sigma_3\cap \U_n=(\Sigma_1\cap\Sigma_2\cap\Sigma_3)\cap \left(\Sigma_2\cap \Sigma_3\cap \U_n\right)$ is either $\U_{n-3}$, or $\Pi_0\U_{n-4}$. In any case, $$|\Sigma_1\cap\Sigma_2\cap\Sigma_3\cap \U_n|< |\Sigma_2\cap\Sigma_3\cap \U_n|.$$ Hence from Equation \eqref{odd codim three} we get
 \begin{align*}
    |\C_n(\Fqt)\cap \U_n|&
  < \sum_{i=1}^3 |\Sigma_i\cap \U_n(\Fqt)|-|\Sigma_1\cap\Sigma_2\cap \U_n(\Fqt)|-|\Sigma_1\cap\Sigma_3\cap \U_n(\Fqt)|\\
  &\leq 3(q^2|\U_{n-2}(\Fqt)|+1)-2|\U_{n-2}(\Fqt)|\\
  &=(3q^2-2)|\U_{n-2}(\Fqt)|+3.
 \end{align*}
  Consequently, in Case 2 we always have $$ |\C_n(\Fqt)\cap \U_n|<(3q^2-2)|\U_{n-2}(\Fqt)|+3.$$ 
  This completes the proof.
\end{proof}

\section{Acknowledgments}
The author would like to thank his PhD advisor, Dr. Mrinmoy Datta, for his unwavering support and insightful discussions throughout this work. He is also grateful to the anonymous referees for their many valuable suggestions in making the article better than the previous version.

\end{document}